\documentclass[12pt]{amsart}
 \usepackage{amssymb, color,amsfonts,amsthm,amscd,verbatim, mathtools,amsmath}
\usepackage[square, comma, sort&compress, numbers]{natbib}
 \usepackage{graphicx}
 \usepackage{bm,comment}
 \topmargin=0pt \oddsidemargin=0pt \evensidemargin=0pt
 \textwidth=15cm \textheight=22cm \raggedbottom

\newtheorem{thm}{Theorem}[section]
\newtheorem{lem}{Lemma}[section]

\newtheorem{pro}{Proposition}[section]

\newtheorem{ex}[thm]{Example}

\theoremstyle{definition}
\newtheorem{dfn}[lem]{Definition}
\theoremstyle{remark}
\newtheorem{rem}{Remark}[section]
\numberwithin{equation}{section}

\def\R{{\Bbb R}}
\let\cal=\mathcal

\begin{document}

\title[Systematic Measures of Biological Networks, Part II]{Systematic Measures of Biological Networks, Part II:
Degeneracy, Complexity and Robustness}
\author[Y. Li] {Yao Li}
\address{Y. Li: Department of Mathematics and Statistics, University
  of Massachusetts Amherst, MA, 01003, USA}
\email{yaoli@math.umass.edu}

\author[Y. Yi]{Yingfei Yi}
\address{Y. Yi: Department of Mathematical
\& Statistical Sci, University of Alberta, Edmonton, Alberta,
Canada T6G 2G1,  School of Mathematics, Jilin University, Changchun
130012, PRC, and School of Mathematics, Georgia Institute of
Technology, Atlanta, GA 30332, USA} \email{yingfei@ualberta.ca,
yi@math.gatech.edu}

\thanks {This research  was partially supported by NSF grant DMS1109201. The second author is also partially supported by NSERC discovery grant 1257749, a faculty development grant from University of Alberta, and a Scholarship from Jilin University. }

\subjclass[2000]{Primary 34F05, 60H10, 37H10, 92B05; Secondary 35B40, 35B41}

\keywords{Degeneracy, Complexity, Robustness, Bio-networks,
Fokker-Planck Equations, Stochastic Differential Equations}

\begin{abstract} This paper is Part II of a two-part series devoting to the
study of systematic measures in a complex bio-network modeled by a
system of ordinary differential equations.  In this part, we
quantify several systematic measures of a biological network
including degeneracy, complexity and robustness. We will apply the
theory of stochastic differential equations to define degeneracy and
complexity for a bio-network. Robustness of the network will be
defined according to the strength of attractions to the global
attractor. Based on the study of stationary probability measures and
entropy made in Part I of the series, we will investigate some
fundamental properties of these systematic measures, in particular
the connections between degeneracy, complexity and robustness.
\end{abstract}

\maketitle

\section{Introduction}


Consider a biological network modeled by the following system of ordinary
differential equations (ODE system for short):
\begin{equation}
   \label{ODE1}
   x' = f(x),\;\;\;x\in \R^n,
\end{equation}
where $f$ is a $C^1$ vector field on $\R^n$, called {\em drift
field}.  Adopting the idea of activating  the functional connections
among modules of the network via external noises in the case of
neural systems \cite{tononi1999measures, li2012quantification},
we add  additive white noise perturbations $\sigma dW_{t}$ to
\eqref{ODE1} to obtain the following system of stochastic
differential equations (SDE system for short):
\begin{equation}
   \label{SDE1}
   \mathrm{d}X = f(X) \mathrm{d}t + \epsilon \sigma(x) \mathrm{d}W_{t},\; \;\; X\in \R^n,
\end{equation}
where  $W_t$ is the standard $m$-dimensional Brownian
 motion,  $\epsilon$ is a small parameter lying in an interval $(0,\epsilon^*)$, and  $\sigma$, called an {\em noise matrix}, is
 an  $n\times m$ matrix-valued, bounded, $C^1$ function on $\R^n$  for some positive integer $m\ge n$,
 such that $\sigma(x)\sigma^{\top}(x)$ is everywhere
non-singular. We denote the collection of such noise
  matrices by $\Sigma$. Under certain dissipation conditions, the SDE system
\eqref{SDE1} generates a diffusion process in $\R^n$ with
well-defined transition probability kernel, and moreover, if the
transition probability kernel admits a density function
$p^t(\xi,x)$, then its time evolution $ u(x,t) =
\int_{\mathbb{R}^{n}} p^{t}(z, x) \xi(z) \mathrm{d}z $ satisfies the
Fokker-Planck equation (FPE for short):
\begin{equation}
   \label{FPE1}
  \left\{\begin{array}{l}

\frac{\partial u(x,t)}{\partial t}=\frac{1}{2}{{\epsilon
}^{2}}\sum\limits_{i,j=1}^{n}{\partial_{ij}(a_{ij}(x)u(x) )-
\sum_{i=1}^{n}\partial_{i}(f(x) u(x)
   )} := L_{\epsilon}u(x),\\
   \int_{\R^n}u(x)\rm dx=1,\end{array}\right.
\end{equation}
where $(a_{ij}(x)) := A(x) := \sigma (x){{\sigma }^{\top}}(x)$.
Denote
\[
   \mathcal{L}_{\epsilon} =
   \frac{1}{2}\epsilon^{2}\sum_{i,j=1}^{n}a_{ij}(x)\partial_{ij} +
   \sum_{i=1}^{n} f_{i}(x)\partial_{i}
\]
as the  adjoint of Fokker-Planck operator. If $u(x)$ is a weak
stationary solution of \eqref{FPE1}, i.e., $u$ is a strictly
positive, continuous function on $\R^n$ with $\int_{\R^n} u(x) {\rm
d} x=1$ such that
\begin{equation}\label{un}
  \int_{\R^n} \mathcal{L}_{\epsilon}h(x) u(x) \mathrm{d}x = 0,
    \qquad\; \forall h \in C_{0}^{\infty}( \mathbb{R}^{n}),
\end{equation}
then the probability measure $\mu_{\epsilon}(\rm d x) = u(x) \mathrm{d} x$ is clearly a stationary  measure of \eqref{FPE1}, i.e.,
\begin{equation}\label{mun}
  \int_{\R^n} \mathcal{L}_{\epsilon} h(x) \mu_{\epsilon}(\mathrm{d}x) = 0,
    \qquad\; \forall h \in C_{0}^{\infty}( \mathbb{R}^{n}).
\end{equation}
Conversely, it follows from the regularity theory of stationary
measures \cite{bogachev2001regularity} that any stationary measure
of \eqref{FPE1} must admit a density function which is necessarily a
weak stationary solution of \eqref{FPE1}.  We remark
that an invariant probability measure of the diffusion process
generated from SDE \eqref{SDE1} must be a stationary measure of the
FPE \eqref{FPE1}  and vice versa under some conditions.

In Part I of the series, we have assumed the following conditions:
\medskip

\begin{itemize}
\item[{\bf H$^0$)}] System \eqref{ODE1} is dissipative
 and there exists a strong Lyapunov function
$W(x)$  with respect to an isolating neighborhood $\mathcal{N}$
of the global attractor $\mathcal{A}$ such that
\begin{displaymath}
   W(x) \geq L_{1} \mathrm{dist}^{2}(x, \mathcal{A}),\;\; x \in \mathcal{N}
\end{displaymath}
for some $L_{1} > 0$.
\end{itemize}

\medskip

\begin{itemize}
  \item[{\bf H$^1$)}] For each $\epsilon \in (0, \epsilon^{*})$, the Fokker-Planck equation
  \eqref{FPE1} admits a unique stationary probability measure
    $\mu_{\epsilon}$ such that  for an isolating
    neighborhood $\cal N$ of $\cal A$,
$$
 \lim_{\epsilon \rightarrow 0} \frac{\mu_{\epsilon}( \mathbb{R}^{n}
   \setminus \mathcal{N})}{\epsilon^{2}}  = 0,
$$
and moreover, there are constants $p, R_0>0$ such that
$$
  \mu_{\epsilon}( \{ x \,:\, |x| > r \}) \leq e^{-\frac {r^{p}}{\epsilon^{2}}}
$$
for all $r>R_0$ and all $\epsilon \in (0, \epsilon^{*})$.
\end{itemize}
\medskip
The desired concentration in {\bf H$^{1}$)} can follow
from various conditions, such as the existence of a quasi-potential
function or a suitable Lyapunov function. See Part I of the series and
Proposition 2.1 below for more information in this regard.

\medskip

 For each given $\epsilon\in
(0,\epsilon^*)$,  the mutual information $MI(X_{1};X_{2})$ among any
two modules (coordinate subspaces) $X_1,X_2$ can be defined using
the margins $\mu_{1}$, $\mu_{2}$   of $\mu_{\epsilon}$ with respect
to $X_{1}, X_{2}$, respectively. Such mutual information can then be
used to quantify degeneracy and complexity. Inspired by
\cite{tononi1999measures}, we will define the
$\{\epsilon,\sigma\}$-degeneracy and -complexity of the evolutionary
network \eqref{ODE1} associated with $\sigma$ as an averaged
combinations of certain mutual informations between different
modules. Let $\{I,\mathcal{O}\}$ be a pair of coordinate subspaces of
the variable set $\R^n$ which decompose $\R^n$, called an {\em
input-output pair}.  For any $0\leq k \leq |I|$, where $|I|$ denotes
the dimension of the input space $I$, the degeneracy
$D_\epsilon(I_{k})$ and complexity $C_\epsilon(I_{k})$,
associated with the $k$-decomposition $I = I_{k}\cup I_{k}^{c}$ is
defined as
\[
 D_\epsilon(I_{k})=MI(I;{{I}_{k}};\mathcal{O})=MI({{I}_{k}};\mathcal{O})+MI(I_{k}^{c};
\mathcal{O})-MI(I;\mathcal{O})
\]
and
\[
C_\epsilon(I_{k})=MI({{I}_{k}};I_{k}^{c})\,,
\]
where $I_{k}$ is a $k$-dimension subspace of $I$ spanned by $k$
variables. The degeneracy $D_\epsilon(\mathcal{O})$, respectively
complexity $C_\epsilon(\mathcal{O})$,
 with respect to the input-output pair
$\{I,\mathcal{O}\}$ is simply the average of all
$D_\epsilon(I_{k})$'s, respectively all  $C_\epsilon(I_{k})$'s. The
{\em degeneracy}, respectively {\em complexity}, of the network
\eqref{ODE1}  associated with $\sigma$, is then defined as $\mathcal{D}_{\sigma}=\liminf_{\epsilon\rightarrow 0} \sup_{\mathcal O}D_\epsilon(\mathcal{O})$,
respectively $ \mathcal{C}_{\sigma}=\liminf_{\epsilon\rightarrow 0} \sup_{\mathcal
O}C_\epsilon(\mathcal{O})$. We refer the readers to Section 3 for
details.

Another systematic measure for the network \eqref{ODE1} is the {\em
robustness}, which will be defined in Section ~\ref{robustness}
relevant to  the strength of its global  attractor, either in a
uniform way or in an average way. As suggested in
\cite{kitano2007towards, kitano2004biological}, the robustness is not always equivalent to
the stability. As to be seen in Section ~\ref{robustness}, if the
performance function of the network \eqref{ODE1} is known, then one
can also define its  functional robustness.

Many simulations and experiments have already suggested that there
are close connections among  degeneracy, complexity and robustness
in a biological system (see e.g. \cite{edelman2001degeneracy,
stelling2004robustness, whitacre2010degeneracy,
whitacre2012degeneracy, clark2011degeneracy}).  For the evolutionary network \eqref{ODE1}
and its noise perturbation \eqref{SDE1}, we will rigorously show the
following results under the conditions {\bf H$^0$)} and {\bf H$^1$) }:
\medskip

\begin{enumerate}
\item[1.]  {\em With respect to a fixed  $\sigma\in \Sigma$, high degeneracy always yields high
  complexity} (Theorem ~\ref{deglarger}).
\item[2.] {\em A robust system with
  non-degenerate attractor has positive degeneracy}  with respect to any  $\sigma\in \Sigma$ (Theorem
  ~\ref{twisted}).
\item[3.] {\em A robust system with stable equilibrium has  positive degeneracy
  with respect to any  $\sigma\in \Sigma$  under certain algebraic conditions} (Theorem ~\ref{degfixpt}).
\end{enumerate}
\medskip

 As  in \cite{edelman2001degeneracy} for neural systems, results above
 are useful in characterizing degenerate biological networks in connection with their
system complexities. This series of papers serves as a mathematical
  supplement of \cite{li2012quantification}.  We refer readers to
\cite{li2012quantification} for degeneracy, complexity,
  and robustness in biological models and discussions in this
  regard. Examples in \cite{li2012quantification} include a signaling pathway
network and a population model.

The paper is organized as follows. Section 2 is a preliminary
section. Section 3 defines degeneracy and
complexity. The robustness is investigated in Section 4. Finally, the connection between degeneracy,
complexity and robustness are proved in Section 5.

\section{Preliminary}

\subsection{Existence and concentration of stationary measures}
It was shown in Part I of the series \cite{li2014systematic} that the condition {\bf H$^1$)} is
implied by {\bf H$^0$)} together with the following condition:
\medskip

\begin{itemize}
\item[{\bf H$^{2}$)}] There is a positive function $U\in C^2(\R^n\setminus \cal
A)$ satisfying the following properties:
\begin{itemize} \item[{\rm
i)}] $\lim_{|x|\to\infty} U(x)=\infty$; \item[{\rm ii)}] There
exists a constant $\rho_m>0$  such that $U$ is a uniform Lyapunov
function of the family \eqref{FPE1} of class ${\cal B}^*$ in ${\cal
N}_\infty=:\mathbb{R}^{n}\setminus \Omega_{\rho_{m}}(U)$, i.e.,
there is a constant $\gamma>0$ independent of $\epsilon$ such that
\[
 \mathcal{L}_{\epsilon} U(x) < -\gamma, \;\qquad x\in {\cal
 N}_\infty
\]
for all $\epsilon\in (0,\epsilon^*)$. Moreover, a
function $H(\rho) \in L^{1}_{loc}([\rho_{m}, \infty))$ and constants
$p>0$, $R>\rho_m$ exist such that
\begin{eqnarray*}
 && H(\rho) \geq |\nabla U(x)|^{2},\quad x \in \Gamma_{\rho}(U),\\
 &&
  \int_{\rho_m}^{\rho}\frac{1}{H(s)} \mathrm{d}s \geq  |x|^{p}, \quad x
  \in \Gamma_{\rho}(U)
\end{eqnarray*}
for all $\rho > R$;
\item[{\rm iii)}] There exists a constant $\bar\rho_m\in (0,\rho_m)$ such
that $U$ is a uniform weak Lyapunov function of the family
\eqref{FPE1} in ${\cal N}_*=:\R^n\setminus {\cal N}_\infty\setminus
\Omega_{\bar\rho_{m}}(U)$, i.e.,
\[
  \mathcal{L}_{\epsilon} U(x) \leq 0, \;\qquad x\in {\cal N}_*
\]
for all $\epsilon\in (0,\epsilon^*)$;
\item[{\rm  iv)}] $\nabla U(x)\ne 0$,
$x\in \R^n\setminus  \Omega_{\bar\rho_{m}}(U)$;
\item[{\rm v)}] $\Omega_{\bar\rho_m} (U)\subset {\cal N}$.
\end{itemize}
\medskip

In the above, ${\cal L}_\epsilon$, $\epsilon\in (0,\epsilon^*)$,
is the adjoint Fokker-Planck operator and $\Gamma_\rho$,
$\Omega_\rho(U)$ denote the $\rho$-level set, $\rho$-sublevel set of
$U$ for each $\rho>0$ respectively.
\end{itemize}

\medskip

In summary, we have the following result.
\medskip
\begin{pro}~\label{cor3.1} {\rm  (Corollary~3.1, \cite{li2014systematic}  )} Conditions {\bf H$^0$), H$^{2}$)} imply {\bf H$^1$)}.
\end{pro}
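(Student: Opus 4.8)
The plan is to establish the three assertions comprising \textbf{H$^1$)} --- existence and uniqueness of $\mu_\epsilon$, the spatial tail bound, and the $o(\epsilon^2)$ concentration near $\mathcal A$ --- in that order, since the first two are what legitimize the integral identities used in the third. Existence follows from a Has'minskii--Foster--Lyapunov criterion: by \textbf{H$^2$)}\,i) $U\to\infty$ at infinity, and by \textbf{H$^2$)}\,ii) $\mathcal L_\epsilon U<-\gamma$ on the unbounded region $\mathcal N_\infty$ with $\gamma>0$ independent of $\epsilon$, so the process is non-explosive and its time-averaged occupation measures are tight; Krylov--Bogolyubov then yields a stationary probability measure $\mu_\epsilon$ for each $\epsilon$. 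Uniqueness I would get from the non-degeneracy assumption $A=\sigma\sigma^\top>0$: this makes $\mathcal L_\epsilon$ locally uniformly elliptic, so by parabolic regularity the transition kernel has a smooth strictly positive density, whence the diffusion is irreducible and strong Feller and can carry at most one stationary measure.

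For the tail bound I would produce a uniform exponential moment $\int_{\R^n} e^{c|x|^p/\epsilon^2}\,\mu_\epsilon(\mathrm dx)\le C$ and then apply Chebyshev. The natural device is the test function $w=\exp(\phi(U)/\epsilon^2)$ with $\phi$ chosen so that $\phi'(\rho)\sim c/H(\rho)$; the growth condition $\int_{\rho_m}^\rho H^{-1}\ge |x|^p$ on $\Gamma_\rho(U)$ then forces $\phi(U(x))\gtrsim |x|^p$, which is exactly the exponent wanted. A direct computation gives
\[
 \frac{\mathcal L_\epsilon w}{w}=\frac{1}{\epsilon^2}\Big(\phi'(U)\,\mathcal L_\epsilon U+\tfrac12\phi'(U)^2|\sigma^\top\nabla U|^2\Big)+\tfrac12\phi''(U)|\sigma^\top\nabla U|^2 .
\]
Since $|\sigma^\top\nabla U|^2\le \|A\|\,|\nabla U|^2\le \|A\|\,H$ by \textbf{H$^2$)}\,ii), the bracket is bounded by $(c/H)\big(-\gamma+\tfrac12 c\|A\|\big)$, which is strictly negative once $c$ is small enough, while the remaining $\phi''$ term is genuinely lower order. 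Hence $\mathcal L_\epsilon w\le 0$ on $\mathcal N_\infty$, and together with the weak Lyapunov sign $\mathcal L_\epsilon U\le 0$ on the bridging region $\mathcal N_*$ from \textbf{H$^2$)}\,iii), $w$ is a supersolution outside a fixed compact set uniformly in $\epsilon$. Integrating against $\mu_\epsilon$ via $\int \mathcal L_\epsilon w\,\mathrm d\mu_\epsilon=0$ (on a compactly truncated version, then passing to the limit) bounds the exponential moment and gives the claimed $e^{-r^p/\epsilon^2}$ tail.

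Finally, for $\mu_\epsilon(\R^n\setminus\mathcal N)/\epsilon^2\to 0$ I would exploit the strong Lyapunov function $W$ of \textbf{H$^0$)}. The quadratic lower bound $W\ge L_1\,\mathrm{dist}^2(x,\mathcal A)$ together with the strict decrease $\nabla W\cdot f\le -c\,\mathrm{dist}^2(x,\mathcal A)$ on $\mathcal N\setminus\mathcal A$ suggests testing with $v=\exp(\beta W/\epsilon^2)$: as above one finds $\mathcal L_\epsilon v/v=\epsilon^{-2}\beta(\nabla W\cdot f+\tfrac12\beta|\sigma^\top\nabla W|^2)+O(1)$, and because $|\sigma^\top\nabla W|^2\lesssim \mathrm{dist}^2$ while $\nabla W\cdot f\lesssim -\mathrm{dist}^2$, a sufficiently small $\beta$ renders $\mathcal L_\epsilon v\le 0$ off a neighborhood of $\mathcal A$. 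Stationarity then gives $\int e^{\beta W/\epsilon^2}\,\mathrm d\mu_\epsilon\le C$, and Chebyshev with $W\ge L_1\mathrm{dist}^2$ yields $\mu_\epsilon(\{\mathrm{dist}(x,\mathcal A)\ge\delta\})\le C e^{-\beta L_1\delta^2/\epsilon^2}$; since $\mathcal N$ isolates $\mathcal A$, its complement lies at a fixed positive distance, so this bound is $O(e^{-c/\epsilon^2})=o(\epsilon^2)$. The main obstacle I anticipate is precisely the patching: $W$ lives only on the isolating neighborhood $\mathcal N$ while $U$ controls the far field, so the two exponential estimates must be glued across the shell $\mathcal N_*$ where only the weak sign $\mathcal L_\epsilon U\le 0$ is available, and every identity $\int\mathcal L_\epsilon(\cdot)\,\mathrm d\mu_\epsilon=0$ with an unbounded integrand has to be justified by truncation using the tail bound already in hand --- which is why the three parts must be proved in the stated order.
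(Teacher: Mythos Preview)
The present paper does not prove this proposition: it is imported verbatim as Corollary~3.1 of Part~I \cite{li2014systematic}, with no argument supplied here. There is therefore nothing in this paper to compare your sketch against; what you have written is a plausible reconstruction of the Part~I argument, and the overall architecture --- Has'minskii--Lyapunov for existence and uniqueness, an exponential-of-$U$ test function for the far-field tail, an exponential-of-$W$ test function for the near-$\mathcal A$ concentration, proved in that order so that the tail bound legitimizes the later unbounded integrations --- is the standard one and almost certainly what Part~I does.

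Two places where your sketch would need tightening if you were to write it out. First, in the tail step you invoke $\phi''$, but \textbf{H$^2$)}\,ii) only assumes $H\in L^1_{\mathrm{loc}}$, so $\phi(\rho)=c\int_{\rho_m}^\rho H(s)^{-1}\,\mathrm ds$ is merely absolutely continuous and the second-derivative contribution has to be handled by mollification or a monotone approximation. Second, in the concentration step your intermediate claims $\nabla W\cdot f\lesssim -\mathrm{dist}^2$ and $|\sigma^\top\nabla W|^2\lesssim\mathrm{dist}^2$ do not follow from \textbf{H$^0$)} as recorded here, which gives only $W\ge L_1\,\mathrm{dist}^2$ together with the strong-Lyapunov inequality $f\cdot\nabla W\le -\gamma_0|\nabla W|^2$; fortunately they are also unnecessary, since that inequality already yields
\[
 f\cdot\nabla W+\tfrac12\beta\,|\sigma^\top\nabla W|^2\le\bigl(-\gamma_0+\tfrac12\beta\|A\|\bigr)|\nabla W|^2<0
\]
for small $\beta$, and the quadratic lower bound on $W$ is needed only at the final Chebyshev step, which you carry out correctly.
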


\medskip

\begin{thm}
\label{accurate} {\rm (Theorem 3.1, \cite{li2014systematic}) } If
both {\bf H$^0$)} and {\bf H$^1$) } hold, then   for any $0<\delta
\ll 1$ there exist constants $\epsilon_{0},M>0$  such that
\begin{displaymath}
   \mu_{\epsilon}(B( \mathcal{A},M\epsilon)) \geq 1-\delta,
\end{displaymath}
whenever $\epsilon\in (0,\epsilon_{0})$.
\end{thm}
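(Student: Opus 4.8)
The plan is to control the second moment of the distance to $\mathcal{A}$ under $\mu_\epsilon$ and then invoke Chebyshev's inequality. Writing $r(x)=\mathrm{dist}(x,\mathcal{A})$, note that for $\epsilon$ small enough $B(\mathcal{A},M\epsilon)\subset\mathcal{N}$, so that
\[
\R^n\setminus B(\mathcal{A},M\epsilon)=\big(\mathcal{N}\setminus B(\mathcal{A},M\epsilon)\big)\cup\big(\R^n\setminus\mathcal{N}\big).
\]
The second piece already has measure $o(\epsilon^2)$ by {\bf H$^1$)}, hence is $<\delta/2$ once $\epsilon$ is small; the whole difficulty is the annular region inside $\mathcal{N}$. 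For this I would establish $\int_{\mathcal{N}}r(x)^2\,\mu_\epsilon(\mathrm{d}x)\le C\epsilon^2$ and then estimate, by Chebyshev,
\[
\mu_\epsilon\big(\{x\in\mathcal{N}:\ r(x)\ge M\epsilon\}\big)\le \frac{1}{M^2\epsilon^2}\int_{\mathcal{N}}r^2\,\mathrm{d}\mu_\epsilon\le\frac{C}{M^2},
\]
which is $<\delta/2$ once $M$ is chosen large. Adding the two contributions yields $\mu_\epsilon(\R^n\setminus B(\mathcal{A},M\epsilon))\le\delta$.

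The moment bound comes from testing the stationarity relation against the strong Lyapunov function $W$. The strong Lyapunov property supplies some $L_2>0$ with $\nabla W\cdot f\le -L_2 W$ on the compact isolating neighborhood $\mathcal{N}$, and combined with the stated bound $W\ge L_1 r^2$ this gives the quadratic dissipation estimate $\nabla W\cdot f\le -L_1L_2\,r^2$ there; meanwhile boundedness of $A=\sigma\sigma^\top$ and of the Hessian of $W$ on $\mathcal{N}$ gives $\tfrac12\epsilon^2\sum_{i,j}a_{ij}\partial_{ij}W\le C_0\epsilon^2$ on $\mathcal{N}$. Hence $\mathcal{L}_\epsilon W\le -L_1L_2\,r^2+C_0\epsilon^2$ on $\mathcal{N}$. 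Assuming the stationarity identity $\int_{\R^n}\mathcal{L}_\epsilon W\,\mathrm{d}\mu_\epsilon=0$ (justified below), I would split it over $\mathcal{N}$ and $\mathcal{N}^c$ and rearrange to obtain
\[
L_1L_2\int_{\mathcal{N}}r^2\,\mathrm{d}\mu_\epsilon\le C_0\epsilon^2\,\mu_\epsilon(\mathcal{N})+\int_{\R^n\setminus\mathcal{N}}\mathcal{L}_\epsilon W\,\mathrm{d}\mu_\epsilon,
\]
so the claim reduces to showing the last integral is $O(\epsilon^2)$, indeed $o(\epsilon^2)$.

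The main obstacle is precisely the handling of this outside term together with the justification of the identity, since the stationarity in \eqref{mun} is only assumed for $h\in C_0^\infty(\R^n)$ whereas $W$ is neither compactly supported nor a priori globally defined. I would first extend $W$ to a globally $C^2$ function that still grows at infinity (using the dissipativity of \eqref{ODE1} in {\bf H$^0$)}), apply \eqref{mun} to the truncations $\chi_R W$ with cutoffs $\chi_R$ equal to $1$ on $\{|x|\le R\}$, and pass to the limit $R\to\infty$. Here the exponential concentration $\mu_\epsilon(\{|x|>r\})\le e^{-r^{p}/\epsilon^2}$ in {\bf H$^1$)} is exactly what forces the commutator terms produced by $\chi_R$ (those involving $\nabla\chi_R$ and $\partial_{ij}\chi_R$) to vanish, yielding $\int\mathcal{L}_\epsilon W\,\mathrm{d}\mu_\epsilon=0$. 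The same exponential tail, combined with the at-most-polynomial growth of $\mathcal{L}_\epsilon W$ on $\mathcal{N}^c$ and the bound $\mu_\epsilon(\mathcal{N}^c)=o(\epsilon^2)$, controls $\int_{\mathcal{N}^c}\mathcal{L}_\epsilon W\,\mathrm{d}\mu_\epsilon$ by $o(\epsilon^2)$. Feeding these into the rearranged inequality gives $\int_{\mathcal{N}}r^2\,\mathrm{d}\mu_\epsilon\le C\epsilon^2$ and completes the argument. I expect the delicate points to be the uniform-in-$\epsilon$ control of the outside integral and the interplay between the $o(\epsilon^2)$ mass in {\bf H$^1$)} and the growth of $W$, both of which hinge on the tail estimate.
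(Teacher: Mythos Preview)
This theorem is not proved in the present paper; it is quoted from Part~I of the series (Theorem~3.1 in \cite{li2014systematic}), so there is no proof here to compare against. Your overall strategy is sound: once a second-moment bound $\int_{\mathcal N}\mathrm{dist}(x,\mathcal A)^2\,\mu_\epsilon(\mathrm dx)\le C\epsilon^2$ is in hand, Chebyshev's inequality together with the $o(\epsilon^2)$ mass outside $\mathcal N$ from {\bf H$^1$)} yield the result immediately. In fact this route makes Theorem~\ref{accurate} an immediate corollary of the upper bound in Theorem~\ref{MSD}, which is likewise quoted from Part~I.

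There is, however, a gap in your derivation of the moment bound. You assert that ``the strong Lyapunov property supplies some $L_2>0$ with $\nabla W\cdot f\le -L_2 W$,'' but the strong Lyapunov condition as used in this paper (see Section~\ref{robustness}) is $f\cdot\nabla W\le -\gamma_0|\nabla W|^2$ on $\mathcal N\setminus\mathcal A$ together with $\nabla W\ne 0$ there; it does not directly give a bound of the form $-L_2 W$ or $-c\,r^2$. Bridging this requires an additional argument that $|\nabla W|^2\ge c\,r^2$ on $\mathcal N$, combining the lower bound $W\ge L_1 r^2$ from {\bf H$^0$)}, the nonvanishing of $\nabla W$ off $\mathcal A$, and compactness of $\overline{\mathcal N}$; this is not entirely routine when $\mathcal A$ is not a smooth manifold. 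A second point needing more care is the global $C^2$ extension of $W$ ``that still grows at infinity'': {\bf H$^0$)} only specifies $W$ on $\mathcal N$, and the dissipativity clause does not by itself furnish such an extension or control $\mathcal L_\epsilon W$ outside $\mathcal N$, so the truncation-and-limit justification of $\int\mathcal L_\epsilon W\,\mathrm d\mu_\epsilon=0$ and the bound on $\int_{\mathcal N^c}\mathcal L_\epsilon W\,\mathrm d\mu_\epsilon$ both rest on an unspecified construction.
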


\medskip
\begin{thm}
\label{MSD} {\rm (Theorem 3.3, \cite{li2014systematic}) } Let
$$
  V(\epsilon) = \int_{\mathbb{R}^{n}} \mathrm{dist}^{2}(x,
  \mathcal{A}) \mu_{\epsilon}( \mathrm{d}x) \,.
$$
If both {\bf H$^0$)} and {\bf H$^1$) }
hold, then there are constants $V_{1}, V_{2}, \epsilon_{0} > 0$ such
that
$$
  V_{2}\epsilon^{2} \leq V(\epsilon) \leq V_{1}(\epsilon) \ , \quad
  \epsilon \in (0,  \epsilon_{0} )\,.
$$
\end{thm}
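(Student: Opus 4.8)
The plan is to base both inequalities on the single \emph{stationarity identity}
$$
   \int_{\R^n}\mathcal{L}_{\epsilon}W(x)\,\mu_{\epsilon}(\mathrm{d}x)=0,
$$
obtained by testing \eqref{mun} against the strong Lyapunov function $W$ from {\bf H$^0$)}. Since $W$ is not compactly supported, the first step is to justify this identity: I would test against $W\chi_{R}$ for a smooth cutoff $\chi_{R}$ supported in $\{|x|<2R\}$ and let $R\to\infty$, using the superexponential tail estimate $\mu_{\epsilon}(\{|x|>r\})\le e^{-r^{p}/\epsilon^{2}}$ of {\bf H$^1$)} to show that the cutoff error and all boundary terms vanish in the limit (the integrand grows at most polynomially while the mass decays superexponentially). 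Writing $\mathcal{L}_{\epsilon}W=\tfrac12\epsilon^{2}\,\mathrm{tr}(A\,D^{2}W)+\nabla W\cdot f$ with $A=\sigma\sigma^{\top}$, the identity becomes
$$
   \int_{\R^n}(-\nabla W\cdot f)\,\mu_{\epsilon}(\mathrm{d}x)=\frac{\epsilon^{2}}{2}\int_{\R^n}\mathrm{tr}(A\,D^{2}W)\,\mu_{\epsilon}(\mathrm{d}x),
$$
which couples the drift (left) to the diffusion (right). Both bounds come from reading this balance in the two directions.

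For the \emph{upper bound}, I would use that $W$ is a \emph{strong} Lyapunov function, so $-\nabla W\cdot f\ge\gamma_{0}\,\mathrm{dist}^{2}(x,\mathcal{A})$ on $\mathcal{N}$ for some $\gamma_{0}>0$. Since $\sigma$ is bounded and $W\in C^{2}$, the integrand $\mathrm{tr}(A\,D^{2}W)$ is bounded on the isolating neighborhood $\mathcal{N}$, and its contribution from $\R^{n}\setminus\mathcal{N}$ is negligible by the tail bound; hence the right-hand side is at most $C_{1}\epsilon^{2}$. Dropping the nonnegative exterior part of the left-hand side then gives $\int_{\mathcal{N}}\mathrm{dist}^{2}\,\mu_{\epsilon}(\mathrm{d}x)\le C_{2}\epsilon^{2}$. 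Finally I would add the exterior contribution $\int_{\R^{n}\setminus\mathcal{N}}\mathrm{dist}^{2}\,\mu_{\epsilon}(\mathrm{d}x)$, which is $o(\epsilon^{2})$ because $\mu_{\epsilon}(\R^{n}\setminus\mathcal{N})=o(\epsilon^{2})$ handles the bounded-distance region while the superexponential tail controls the unbounded part, yielding $V(\epsilon)\le V_{1}\epsilon^{2}$.

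The \emph{lower bound} is where the non-degeneracy of the noise is essential, and it is the main obstacle. The idea is that since $W\ge L_{1}\,\mathrm{dist}^{2}$ vanishes and is minimized on $\mathcal{A}$, at a point $x_{0}\in\mathcal{A}$ the Hessian $D^{2}W(x_{0})$ is positive semidefinite, is bounded below by $2L_{1}$ in the directions normal to $\mathcal{A}$, and annihilates $T_{x_{0}}\mathcal{A}$; because $f(x_{0})$ is tangent to the invariant set $\mathcal{A}$, this forces $D^{2}W(x_{0})f(x_{0})=0$. Two consequences follow near $\mathcal{A}$. First, since $A=\sigma\sigma^{\top}$ is everywhere non-singular, $\mathrm{tr}(A\,D^{2}W)\ge c_{0}>0$ on a neighborhood of $\mathcal{A}$; combined with Theorem~\ref{accurate}, which places mass $1-\delta$ in $B(\mathcal{A},M\epsilon)$, and the tail bound to discard the rest, $\int_{\R^n}\mathrm{tr}(A\,D^{2}W)\,\mu_{\epsilon}(\mathrm{d}x)\ge c_{0}/2$. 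Second, the vanishing of the linear term shows $|\nabla W\cdot f|\le\Gamma_{0}\,\mathrm{dist}^{2}$ near $\mathcal{A}$, so the left-hand side of the balance is at most $\Gamma_{0}V(\epsilon)+o(\epsilon^{2})$. Reading the balance with the right side $\ge\tfrac{c_{0}}{4}\epsilon^{2}$ and the left side $\le\Gamma_{0}V(\epsilon)+o(\epsilon^{2})$ then yields $V(\epsilon)\ge V_{2}\epsilon^{2}$.

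The step I expect to be delicate is the second-order analysis of $W$ on $\mathcal{A}$ that produces simultaneously $\mathrm{tr}(A\,D^{2}W)\ge c_{0}$ and the quadratic control $|\nabla W\cdot f|\le\Gamma_{0}\,\mathrm{dist}^{2}$: for a general compact global attractor the notions of ``tangent'' and ``normal'' need not be globally defined, so I would either impose enough regularity on $\mathcal{A}$ or replace the pointwise eigen-analysis by a local tubular-neighborhood and smoothing argument, absorbing the non-manifold part of $\mathcal{A}$ through its $\mu_{\epsilon}$-negligibility. It is precisely here that $\det(\sigma\sigma^{\top})\ne0$ enters: without a non-degenerate normal diffusion the balance degenerates and only the weaker estimate $V(\epsilon)\gtrsim\epsilon^{4}$ survives. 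The remaining analytic points, namely the cutoff justification of the stationarity identity and the tail estimates for the exterior integrals, are routine consequences of the superexponential decay in {\bf H$^1$)}.
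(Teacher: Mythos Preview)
The paper does not contain a proof of this statement. Theorem~\ref{MSD} is quoted verbatim from Part~I of the series (Theorem~3.3 of \cite{li2014systematic}) as a preliminary result; no argument is reproduced here, so there is nothing in the present paper to compare your proposal against.

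A few remarks on the proposal itself. The upper-bound half is the standard Lyapunov--stationarity argument and is essentially sound, but note that the paper's definition of a strong Lyapunov function (Section~\ref{robustness}) gives $-\nabla W\cdot f\ge\gamma_{0}|\nabla W|^{2}$, not $\gamma_{0}\,\mathrm{dist}^{2}(x,\mathcal{A})$; you would still need $|\nabla W|\gtrsim\mathrm{dist}(\cdot,\mathcal{A})$ near $\mathcal{A}$, which is plausible from $W\ge L_{1}\,\mathrm{dist}^{2}$ and $W|_{\mathcal{A}}=0$ but is not immediate from {\bf H$^{0}$)} as stated. For the lower bound you have correctly located the real obstacle: both the strict positivity $\mathrm{tr}(A\,D^{2}W)\ge c_{0}$ and the quadratic control $|\nabla W\cdot f|\le\Gamma_{0}\,\mathrm{dist}^{2}$ rely on a second-order picture of $W$ along $\mathcal{A}$ (tangent/normal splitting, $D^{2}W(x_{0})f(x_{0})=0$) that {\bf H$^{0}$)} by itself does not supply for a general compact attractor. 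Your suggested remedies (extra regularity on $\mathcal{A}$, or a tubular-neighborhood smoothing) are reasonable directions, but as written this step is a genuine gap rather than a routine detail. Whether Part~I proceeds along these lines or by a different mechanism cannot be determined from the present paper.
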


Let $\mu$ be the probability measure with density $u$, define the {\it
differential entropy} by
$$
\mathcal{H}(\mu) = -\int_{\mathbb{R}^{n}} u(x) \log u(x) \mathrm{d}x \,.
$$

\begin{thm}
\label{EntDimThm} {\rm (Theorem 4.1, \cite{li2014systematic}) }
Assume that {\bf H$^{0}$)} and {\bf H$^{1}$)} hold. If $\mathcal{A}$
is a regular set, then
\begin{equation}
   \label{EntDim}
   \liminf_{\epsilon\rightarrow
     0}\frac{\mathcal{H}(\mu_{\epsilon})}{\log \epsilon} \geq n - d \,,
\end{equation}
where $d$ is the Minkowski dimension of $\mathcal{A}$. If in addition
the family $\{\mu_{\epsilon}\}$ is regular with respect to
$\mathcal{A}$, then the equality holds in \eqref{EntDim}.
\end{thm}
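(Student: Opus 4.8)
The plan is to exploit the two geometric facts already available: Theorem~\ref{accurate} says the mass of $\mu_\epsilon$ lives in an $O(\epsilon)$--tube about the $d$--dimensional set $\mathcal A$, while Theorem~\ref{MSD} controls the mean square distance by $V(\epsilon)\le V_1\epsilon^2$. A probability density spread over a tube of volume $\asymp\epsilon^{n-d}$ has size $\asymp\epsilon^{-(n-d)}$ and hence differential entropy $\asymp(n-d)\log\epsilon$; the whole proof is a rigorous version of this count. For the lower bound on $\liminf_{\epsilon\to0}\mathcal H(\mu_\epsilon)/\log\epsilon$ I would compare $\mu_\epsilon$ with the Gaussian--type reference density $v_\epsilon(x)=Z(\epsilon)^{-1}\exp(-\mathrm{dist}^2(x,\mathcal A)/\epsilon^2)$, where $Z(\epsilon)=\int_{\R^n}\exp(-\mathrm{dist}^2(x,\mathcal A)/\epsilon^2)\,\mathrm dx$ is finite because $\mathcal A$ is compact. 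Gibbs' inequality (nonnegativity of relative entropy, valid since $v_\epsilon>0$) gives at once
\[
\mathcal H(\mu_\epsilon)\le \log Z(\epsilon)+\frac{1}{\epsilon^2}\int_{\R^n}\mathrm{dist}^2(x,\mathcal A)\,\mu_\epsilon(\mathrm dx)=\log Z(\epsilon)+\frac{V(\epsilon)}{\epsilon^2}\le \log Z(\epsilon)+V_1 .
\]

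The next step is to estimate $\log Z(\epsilon)$ through the volume $F(r)=\mathrm{vol}(B(\mathcal A,r))$ of the $r$--tube. Integrating by parts (the boundary terms vanish since $F(0)=0$ and $e^{-r^2/\epsilon^2}F(r)\to0$) yields
\[
Z(\epsilon)=\int_0^\infty e^{-r^2/\epsilon^2}\,\mathrm dF(r)=\frac{2}{\epsilon^2}\int_0^\infty r\,e^{-r^2/\epsilon^2}F(r)\,\mathrm dr .
\]
Since $\mathcal A$ is a regular set its upper and lower Minkowski dimensions both equal $d$, so for every $\eta>0$ there is $r_0>0$ with $F(r)\le r^{\,n-d-\eta}$ on $(0,r_0]$; on $r>r_0$ the Gaussian factor together with the polynomial bound $F(r)\le c(1+r)^n$ (compactness of $\mathcal A$) makes the contribution exponentially small. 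Substituting $r=\epsilon s$ in the remaining integral extracts the factor $\epsilon^{\,n-d-\eta}$ times a convergent $s$--integral, so $Z(\epsilon)\le C_\eta\,\epsilon^{\,n-d-\eta}$ and $\log Z(\epsilon)\le(n-d-\eta)\log\epsilon+O(1)$. Feeding this into the previous display, dividing by $\log\epsilon<0$ (which reverses the inequality), and letting $\epsilon\to0$ and then $\eta\to0$ gives $\liminf_{\epsilon\to0}\mathcal H(\mu_\epsilon)/\log\epsilon\ge n-d$.

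For the reverse inequality under the additional regularity of $\{\mu_\epsilon\}$, I would use that this hypothesis supplies the matching control on the densities, namely a constant $C'$ with $u_\epsilon(x)\le C'\epsilon^{-(n-d)}$ for all $x$ and all small $\epsilon$ (equivalently, that no $\epsilon$--ball carries more than $O(\epsilon^d)$ of the mass). Granting this, $-\log u_\epsilon\ge(n-d)\log\epsilon-\log C'$ holds pointwise, and integrating against $u_\epsilon\ge0$ gives $\mathcal H(\mu_\epsilon)\ge(n-d)\log\epsilon-\log C'$; dividing by $\log\epsilon<0$ yields $\limsup_{\epsilon\to0}\mathcal H(\mu_\epsilon)/\log\epsilon\le n-d$, which combined with the first part forces equality in \eqref{EntDim}. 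The main obstacle is exactly this pointwise density bound: the estimates of Theorems~\ref{accurate} and \ref{MSD} control how far the mass spreads but never rule out thin, tall spikes in $u_\epsilon$, so the entropy \emph{lower} bound genuinely needs the regularity of the family; the crux is to verify that the Part~I notion of a regular family indeed delivers $u_\epsilon\lesssim\epsilon^{-(n-d)}$ rather than a mere covering or volume statement.
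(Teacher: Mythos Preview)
This theorem is quoted from Part~I (\cite{li2014systematic}) and is not re-proved in the present paper, so there is no in-paper proof to compare against literally. That said, the proof of Theorem~\ref{twisted} in Section~\ref{connections} reproduces the matching argument (the entropy--dimension identity) for projected densities, and I will compare your proposal with that.

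Your first half is clean and correct, and in fact slicker than what the paper does: comparing $\mu_\epsilon$ with the explicit Gibbs density $v_\epsilon\propto e^{-\mathrm{dist}^2(\cdot,\mathcal A)/\epsilon^2}$ via nonnegativity of relative entropy, combined with the second-moment bound of Theorem~\ref{MSD} and the tube-volume estimate from regularity of $\mathcal A$, yields the inequality $\liminf\mathcal H(\mu_\epsilon)/\log\epsilon\ge n-d$ with essentially no work. The paper's route (visible in the proof of Theorem~\ref{twisted}) instead splits $u_\epsilon$ using the approximating functions from the regularity hypothesis and bounds the pieces by hand; your Gibbs-inequality argument avoids that decomposition entirely for this direction.

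Your second half, however, rests on a claim that the regularity hypothesis does \emph{not} deliver. The definition in Section~2.3 gives, for each $\delta>0$, an $L^1$-approximant $u_{K,\epsilon}$ supported on $B(\mathcal A,K\epsilon)$ with bounded oscillation there; it says nothing pointwise about $u_\epsilon$ itself, and in particular does not yield $u_\epsilon\le C'\epsilon^{-(n-d)}$. (The only a~priori pointwise bound available is Lemma~\ref{entbound}: $u_\epsilon\le\epsilon^{-(2n+1)}$.) The correct mechanism, as carried out in the proof of Theorem~\ref{twisted}, is to compute $\mathcal H$ on the approximant $u_{K,\epsilon}$ --- whose flatness on a set of volume $\asymp\epsilon^{n-d}$ gives entropy $\asymp(n-d)\log\epsilon$ directly --- and then transfer to $u_\epsilon$ using the $L^1$ closeness together with the crude bound $u_\epsilon\le\epsilon^{-(2n+1)}$, which makes the error term $O(\delta\,|\log\epsilon|)$; sending $\delta\to0$ after dividing by $\log\epsilon$ closes the argument. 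So your instinct that the obstacle is precisely the pointwise density control is right, but the resolution is not to extract a sharper pointwise bound from regularity (you cannot); it is to work with the approximant and absorb the $L^1$ error at the level of entropy.
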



For the definition of regular sets and measures, see Section 2.3 for
the detail.

\subsection{Tightness} For a Borel set $\Omega\subset\R^n$, let
$M(\Omega)$ denote the set of Borel probability measures on $\Omega$
furnished with the {\em weak$^*$-topology}, i.e., $\mu_k\to \mu$ iff
\[
\int_{\Omega} f(x) \mathrm{d} \mu_{k} (x) \to \int_{\Omega} f(x) \mathrm{d}
\mu(x),
\]
for every $f\in C_b(\Omega)$. A subset $\mathcal{M}\subset
M(\Omega)$ is said to be {\em tight} if for any $\epsilon>0$ there
exists a compact subset $K_\epsilon \subset \Omega$ such that
$\mu(\Omega \setminus K_\epsilon)<\epsilon$ for all $\mu\in
\mathcal{M}$.
\medskip

\begin{thm}\label{Prokh} {\rm (Prokhorov's Theorem, \cite{dellacherie1978probabilities})}   If a subset
$\mathcal{M}\subset M(\Omega)$ is tight, then it is relatively
sequentially compact in $M(\Omega)$.
\end{thm}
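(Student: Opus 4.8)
The plan is to embed the problem into a compact metric space, where sequential weak$^*$ compactness of probability measures is classical, and then to invoke tightness twice: once to confine the limit to $\Omega$, and once to promote convergence against $C(\hat\Omega)$ to convergence against all of $C_b(\Omega)$. First I would record the exhaustion furnished by tightness: for each $j\ge 1$ choose a compact $K_j\subset\Omega$ with $\mu(\Omega\setminus K_j)\le 1/j$ for every $\mu\in\mathcal M$, and after replacing $K_j$ by $\bigcup_{i\le j}K_i$ assume $K_1\subseteq K_2\subseteq\cdots$. Next, embed $\Omega\subseteq\R^n$ into the compact metrizable space $\hat\Omega=[-\infty,\infty]^n$ and regard each $\mu\in\mathcal M$ as a Borel probability measure $\hat\mu$ on $\hat\Omega$ carried by $\Omega$. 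Since $\hat\Omega$ is compact metric, $C(\hat\Omega)$ is separable, so the closed unit ball of $C(\hat\Omega)^*$ is weak$^*$ compact by Banach--Alaoglu and weak$^*$ metrizable by separability, hence sequentially compact; therefore any sequence $(\mu_n)\subset\mathcal M$ admits a subsequence $(\mu_{n_k})$ with $\hat\mu_{n_k}\to\hat\mu$ weak$^*$ in $M(\hat\Omega)$, where testing against the constant $1$ shows $\hat\mu$ is a probability measure.

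I would then show that $\hat\mu$ is carried by $\Omega$ and that convergence survives restriction. As each $K_j$ is closed in $\hat\Omega$, the Portmanteau inequality for closed sets yields
\[
\hat\mu(K_j)\ \ge\ \limsup_{k\to\infty}\hat\mu_{n_k}(K_j)\ =\ \limsup_{k\to\infty}\mu_{n_k}(K_j)\ \ge\ 1-\tfrac1j,
\]
so $\hat\mu\big(\bigcup_j K_j\big)=1$; since $\bigcup_j K_j\subseteq\Omega$, the restriction $\mu:=\hat\mu|_\Omega$ is a genuine Borel probability measure on $\Omega$, i.e.\ $\mu\in M(\Omega)$, and it inherits the tail bound $\mu(\Omega\setminus K_j)\le 1/j$. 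To check $\mu_{n_k}\to\mu$ in the weak$^*$ topology of $M(\Omega)$, fix $f\in C_b(\Omega)$ and $j$, and use the Tietze extension theorem to pick $\phi\in C(\hat\Omega)$ agreeing with $f$ on $K_j$ and satisfying $\|\phi\|_\infty\le\|f\|_\infty$. Splitting each integral over $K_j$ and its complement, the tail terms for $\mu_{n_k}$ and for $\mu$ are each bounded by $\|f\|_\infty/j$ by tightness, while $\int_{\hat\Omega}\phi\,d\hat\mu_{n_k}\to\int_{\hat\Omega}\phi\,d\hat\mu$ by construction; letting $k\to\infty$ and then $j\to\infty$ gives $\int_\Omega f\,d\mu_{n_k}\to\int_\Omega f\,d\mu$.

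The main obstacle is the escape of mass, which is exactly what tightness is designed to exclude, and it surfaces at two distinct points. Weak$^*$ limits formed in the compactification $\hat\Omega$ may a priori deposit mass on the added points $\hat\Omega\setminus\R^n$ (``mass at infinity''); the Portmanteau estimate above, powered by tightness, forbids this and is what makes the candidate limit an honest probability measure on $\Omega$ rather than a sub-probability measure. Separately, a test function $f\in C_b(\Omega)$ need not extend continuously to $\hat\Omega$, so the convergence $\hat\mu_{n_k}\to\hat\mu$ against $C(\hat\Omega)$ does not by itself control $\int f\,d\mu_{n_k}$; the truncation-and-Tietze device, again relying on the uniform tail bound, is what bridges this gap. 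The remaining ingredients---separability of $C(\hat\Omega)$, metrizability of the weak$^*$ topology on the dual unit ball, and the Riesz--Portmanteau machinery---are routine for compact subsets of $\R^n$.
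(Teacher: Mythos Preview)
Your argument is correct and follows a standard route to Prokhorov's theorem in the Euclidean setting: compactify, extract a weak$^*$ limit via Banach--Alaoglu plus separability, use Portmanteau on closed sets together with tightness to rule out mass escaping to the boundary, and finally bridge from $C(\hat\Omega)$ to $C_b(\Omega)$ by Tietze plus the uniform tail bound. Each step is sound as written.

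There is nothing to compare, however: the paper does not prove this statement. Theorem~\ref{Prokh} is recorded in the preliminary section as a quoted result from \cite{dellacherie1978probabilities}, with no accompanying argument; it is invoked later (in the proof of Theorem~\ref{entper}) only as a black box to pass from tightness of $\{\mu_l\}$ to the existence of a weak$^*$ convergent subsequence. So your write-up supplies a self-contained proof where the paper simply cites the literature.
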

\medskip

\subsection{Regularity of sets and measures}
A set $A \subset \mathbb{R}^{n}$ is called a {\it regular set} if
\begin{displaymath}
   \limsup_{r \rightarrow 0} \frac{\log m(B(A,r))}{-\log r}  = \liminf_{r\rightarrow 0}
   \frac{\log m(B(A,r))}{-\log r} = n-d
\end{displaymath}
for some $d\geq 0$. Hereafter, $m(\cdot)$ denotes the Lebesgue
measure on $\mathbb{R}^{n}$.  It is easy to check that
  $d$ is the Minkowski dimension of $A$. Regular sets form a large class that
includes smooth manifolds and some fractal sets like Cantor sets.
However, not all measurable sets are regular.

Assume that \eqref{ODE1} admits a global attractor $\cal A$ and the
Fokker-Planck equation \eqref{FPE1} admits a  stationary probability measure
$\mu_\epsilon$  for each  $\epsilon\in (0,\epsilon_*)$. The family
$\{\mu_\epsilon\}$
 of
stationary probability measures  is said to be {\it regular with respect to
$\mathcal{A}$}  if for any $\delta > 0$ there are constants $K$, $C$
and a family of approximate funtions $u_{K,\epsilon}$ supported on $B( \mathcal{A}, K\epsilon)$ such that for all $\epsilon \in (0, \epsilon^*)$,
\begin{itemize}
  \item[a)] \begin{equation}\label{regular}
   \inf_{B(\mathcal{A},K\epsilon) }(u_{K, \epsilon}(x))\geq C
   \sup_{B(\mathcal{A},K\epsilon)}(u_{K, \epsilon}(x))  \,;
\end{equation}
and
\item[b)]
$$
 \|u_{\epsilon}(x) - u_{K, \epsilon}(x) \|_{L^{1}}  \leq \delta \,,
$$
where $u_{\epsilon}$ is the density function of $\mu_{\epsilon}$.
\end{itemize}

\medskip

Part I \cite{li2014systematic} gives several examples of regular
family $\mu_{\epsilon}$ with respect to $\mathcal{A}$. We conjecture
that the family $\mu_{\epsilon}$ is regular with respect to
$\mathcal{A}$ for a much larger class of systems. Details will be
given in our future work.



\subsection{2-Wasserstein metric}
Originally introduced in the study of optimal transportation
problems, the 2-Wasserstein metric is a distance function
 for probability distributions on a given metric space.
Let  $\mathcal{P}(\R^n)$ be the set of probability
measures on $\R^n$ with  finite second moment. The {\it
2-Wasserstein distance} $\mathcal{W}(\mu,\nu)$ between two
probability measures $\mu, \nu \in \mathcal{P}(\R^n)$ is defined by
\begin{displaymath}
   \mathcal{W}^2(\mu,\nu) = \inf_{r \in
     \mathcal{P}(\mu,\nu)}\int_{\R^n\times \R^n} |x-y|^2 \mathrm{d}r,
\end{displaymath}
where $\mathcal{P}(\mu,\nu)$ is the set of all probability measures
on the space $\R^n\times \R^n$ with marginal $\mu$ and $\nu$.
Intuitively, $\mathcal{W}(\mu,\nu)$ measures the minimum ``cost'' of
turning measure $\mu$ to measure $\nu$. The topology  on
$\cal P(\R^n)$ defined by the 2-Wasserstein metric is
essentially the same as the weak$^*$ topology on
$\mathcal{P}(\R^n)$.
\medskip

\begin{thm}
{\rm (Theorem 7.1.5, \cite{ambrosio2006gradient})} For a given
sequence $\{\mu_{n}\} \subset \mathcal{P}(X)$, \break
$\lim_{n\rightarrow \infty} \mathcal{W}(\mu_{n},\mu) = 0$ if and
only if $\mu_{n}\to \mu$ under the weak$^*$ topology and second
moments of $\{\mu_{n}\}$ are uniformly bounded.
\end{thm}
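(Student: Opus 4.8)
The plan is to prove the two implications separately, handling the easier forward direction with an optimal coupling and the harder converse through the Skorokhod representation theorem. Throughout, write $r_n\in\mathcal{P}(\mu_n,\mu)$ for a plan realizing $\mathcal{W}(\mu_n,\mu)$; such optimal plans exist because the cost $|x-y|^2$ is lower semicontinuous and the marginals live on the Polish space $\R^n$.

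Suppose first that $\mathcal{W}(\mu_n,\mu)\to 0$. For weak$^*$ convergence I would test against an arbitrary bounded Lipschitz $f$ with Lipschitz constant $L$ and estimate, via the optimal plan and Cauchy--Schwarz,
\[
\Bigl|\int_{\R^n} f\,\mathrm{d}\mu_n-\int_{\R^n} f\,\mathrm{d}\mu\Bigr|
=\Bigl|\int (f(x)-f(y))\,\mathrm{d}r_n\Bigr|
\le L\int|x-y|\,\mathrm{d}r_n
\le L\,\mathcal{W}(\mu_n,\mu)\to 0,
\]
and since bounded Lipschitz functions determine weak$^*$ convergence, $\mu_n\to\mu$ weakly$^*$. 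For the second moments I would use the reverse triangle inequality for $\mathcal{W}$: since $\bigl(\int|x|^2\,\mathrm{d}\mu_n\bigr)^{1/2}=\mathcal{W}(\mu_n,\delta_0)$ and $|\mathcal{W}(\mu_n,\delta_0)-\mathcal{W}(\mu,\delta_0)|\le\mathcal{W}(\mu_n,\mu)\to 0$, the second moments are bounded and in fact converge to $\int|x|^2\,\mathrm{d}\mu$.

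For the converse, weak$^*$ convergence on the Polish space $\R^n$ lets me apply the Skorokhod representation theorem to realize the laws $\mu_n$ and $\mu$ by random vectors $Y_n\sim\mu_n$, $Y\sim\mu$ on a common probability space with $Y_n\to Y$ almost surely. Since $(Y_n,Y)$ is one admissible coupling, $\mathcal{W}^2(\mu_n,\mu)\le\mathbb{E}\,|Y_n-Y|^2$, so it suffices to show $\mathbb{E}\,|Y_n-Y|^2\to 0$. Now $|Y_n-Y|^2\to 0$ almost surely, and by the Vitali convergence theorem this passes to expectations once $\{|Y_n-Y|^2\}$ is uniformly integrable; the bound $|Y_n-Y|^2\le 2|Y_n|^2+2|Y|^2$ reduces this to uniform integrability of $\{|Y_n|^2\}$, i.e.\ of $|x|^2$ against the family $\{\mu_n\}$.

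This reduction is where I expect the main obstacle to sit, and it clarifies how the moment hypothesis must be read. The Vitali step needs not mere uniform \emph{boundedness} of the second moments but their uniform \emph{integrability}, $\lim_{R\to\infty}\sup_n\int_{|x|>R}|x|^2\,\mathrm{d}\mu_n=0$, which, given the already-established weak$^*$ convergence, is equivalent to the convergence $\int|x|^2\,\mathrm{d}\mu_n\to\int|x|^2\,\mathrm{d}\mu$. Boundedness alone does not suffice: for $\mu_n=(1-\tfrac1n)\delta_0+\tfrac1n\delta_{\sqrt n}$ on $\R$ the second moments equal $1$ for every $n$ and $\mu_n\to\delta_0$ weakly$^*$, yet $\mathcal{W}(\mu_n,\delta_0)=1$, the obstruction being a vanishing fraction of mass escaping to infinity. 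Accordingly I would either take the tail control as the precise form of the hypothesis or derive it from a uniform bound on the $(2+\eta)$-moments; in the application of interest the exponential tail estimate in {\bf H$^1$)} supplies this uniform integrability at once.
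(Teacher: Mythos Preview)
The paper does not supply its own proof of this statement: it is quoted verbatim as Theorem~7.1.5 of \cite{ambrosio2006gradient} and used as a black box, so there is nothing in the paper to compare your argument against.

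That said, your sketch is sound and, more to the point, you have put your finger on a genuine defect in the \emph{statement} as the paper records it. Your forward direction is the standard one and is correct. For the converse you are right that mere uniform boundedness of second moments is insufficient, and your example $\mu_n=(1-\tfrac1n)\delta_0+\tfrac1n\delta_{\sqrt n}$ is the canonical counterexample: weak$^*$ convergence to $\delta_0$, second moments identically $1$, yet $\mathcal{W}(\mu_n,\delta_0)=1$. The actual Theorem~7.1.5 in \cite{ambrosio2006gradient} characterizes $\mathcal{W}$-convergence by narrow convergence \emph{together with convergence of the second moments} (equivalently, uniform integrability of $|x|^2$), not just their boundedness; the paper's transcription has weakened the hypothesis incorrectly. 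Your Skorokhod--Vitali route then goes through cleanly under the correct hypothesis, and your closing remark that the exponential tails in {\bf H$^1$)} supply the needed uniform integrability in this paper's applications is exactly why the misstatement is harmless here.
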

\medskip

 Given $\mu,\nu\in {\cal P}(\R^n)$, a measure $r$ on
$\mathcal{P}(\R^n \times \R^n)$ is called the {\it optimal measure}
if $r\in \mathcal{P}(\mu,\nu)$ and
\begin{displaymath}
    \mathcal{W}^2(\mu,\nu) =\int_{\R^n\times \R^n} |x-y|^2 \mathrm{d}r.
\end{displaymath}
The set of optimal measures with respect to $\mu, \nu\in {\cal
P}(\R^n)$ is denoted by $\mathcal{P}_{0}(\mu,\nu)$.

 The variational problem in finding the optimal measure
is called the {\it Kantorovich problem}, which,  under certain
regularity conditions, is equivalent to the so-called  {\it Monge
problem} of finding a measurable map  $T:\R^n\to \R^n$, called a
{\em transport map}, such that
\begin{displaymath}
\mathcal{W}^{2}(\mu,\nu) = \inf_{T\sharp\mu = \nu}\int_{\R^n} |x -
T(x)|^{2} \mathrm{d}x \,,
\end{displaymath}
where $T\sharp \mu$ stands for the push-forward map.
\medskip

\begin{thm}
\label{Monge} {\rm (Theorem 6.2.4, \cite{ambrosio2006gradient})}
 Suppose that  $\mu,\nu \in \mathcal{P}(\mathbb{R}^{n})$
with $\mu$ being Borel regular and
\begin{displaymath}
   \mu(\{x\in \mathbb{R}^{n} : \int_{\R^n} |x-y|^{2} \nu( \mathrm{d}y) < \infty
   \}) > 0,
\end{displaymath}
\begin{displaymath}
   \nu(\{x\in \mathbb{R}^{n} : \int_{\R^n}  |x-y|^{2} \mu( \mathrm{d}y) < \infty
   \}) > 0.
\end{displaymath}
Then there exists a unique optimal measure $r$,  and moreover,
\begin{displaymath}
   r = (i \times T)\sharp \mu
\end{displaymath}
for some transport map $T$ with $T\sharp\mu = \nu$, where $i$ is the
identity map on $\R^n$.
\end{thm}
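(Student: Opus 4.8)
This is Brenier's theorem in the Knott--Smith form, and I would follow the classical three-step programme: produce an optimal plan by compactness, characterize its support by cyclical monotonicity, and then use the regularity of $\mu$ to collapse the plan onto a graph.

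First I would establish existence of an optimal plan by the direct method. The admissible set $\mathcal{P}(\mu,\nu)$ is nonempty since it contains the product $\mu\otimes\nu$, and it is tight: for compact $K_1,K_2\subset\R^n$ every $r\in\mathcal{P}(\mu,\nu)$ obeys $r\big((\R^n\times\R^n)\setminus(K_1\times K_2)\big)\le \mu(\R^n\setminus K_1)+\nu(\R^n\setminus K_2)$, so tightness of the individual marginals (automatic on $\R^n$) forces tightness of $\mathcal{P}(\mu,\nu)$. By Prokhorov's theorem (Theorem~\ref{Prokh}) this set is relatively weak$^*$ compact, and since $(x,y)\mapsto|x-y|^2$ is nonnegative and lower semicontinuous, the functional $r\mapsto\int|x-y|^2\,\mathrm{d}r$ is weak$^*$ lower semicontinuous. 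A minimizing sequence therefore has a weak$^*$ limit attaining the infimum. The two integrability hypotheses on $\mu,\nu$ are exactly what guarantees this infimum is finite, so the minimization is nondegenerate.

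The structural heart is to show that the support of any optimal plan $r$ is cyclically monotone for the cost $c(x,y)=|x-y|^2$. I would argue by contradiction: were $\mathrm{supp}(r)$ to contain finitely many pairs violating cyclical monotonicity, a local rearrangement of mass on small neighborhoods of those pairs (constructed via a disintegration of $r$) would produce a competitor in $\mathcal{P}(\mu,\nu)$ of strictly smaller cost, contradicting optimality. Expanding $|x-y|^2=|x|^2-2\langle x,y\rangle+|y|^2$ shows that minimizing the cost is equivalent to maximizing $\int\langle x,y\rangle\,\mathrm{d}r$, whence $c$-cyclical monotonicity becomes ordinary cyclical monotonicity of $\mathrm{supp}(r)$. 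By Rockafellar's theorem a cyclically monotone set lies in the graph of the subdifferential $\partial\varphi$ of a proper, lower semicontinuous convex function $\varphi$, so $r$ is concentrated on $\{(x,y):y\in\partial\varphi(x)\}$.

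Finally I would pass from plan to map. The non-differentiability set of the convex function $\varphi$ is $(n-1)$-rectifiable, hence small; the regularity hypothesis on $\mu$ is precisely that $\mu$ assigns it zero mass, so $\partial\varphi(x)=\{\nabla\varphi(x)\}$ is single-valued for $\mu$-a.e.\ $x$. Setting $T:=\nabla\varphi$, the plan $r$ is concentrated on the graph of $T$, which gives $r=(i\times T)\sharp\mu$ and $T\sharp\mu=\nu$. Uniqueness follows by a convexity trick: if $r_1,r_2$ were both optimal, then so is $\tfrac{1}{2}(r_1+r_2)$ (same marginals, and the cost is linear in $r$); by the preceding argument it concentrates on the graph of a single map, and since $r_1,r_2\le 2\cdot\tfrac{1}{2}(r_1+r_2)$, each $r_i$ concentrates on that same graph, forcing $r_1=r_2$. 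The main obstacle is the second step: both the measure-theoretic rearrangement establishing cyclical monotonicity and Rockafellar's reconstruction of a genuinely convex, lower semicontinuous $\varphi$ from a merely monotone set must be handled carefully, and it is exactly the regularity of $\mu$ that rules out $\mu$ charging the multivaluedness set of $\partial\varphi$ and thereby both produces the map and yields uniqueness.
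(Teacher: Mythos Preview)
The paper does not prove this theorem; it is quoted from \cite{ambrosio2006gradient} in the preliminary section and used as a black box (in Theorem~\ref{w2MSD} and Proposition~\ref{rfixpt}). Your sketch is the standard Knott--Smith/Brenier argument---existence via Prokhorov, $c$-cyclical monotonicity of optimal supports, Rockafellar's theorem to get a convex potential, and a.e.\ differentiability of convex functions (using the regularity of $\mu$) to collapse the plan onto a graph---which is indeed the line taken in the cited reference, so there is nothing to compare.
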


\subsection{Estimates of differential entropy}
Let $u_{\epsilon}(x)$ be the probability density function of
$\mu_{\epsilon}$.

\begin{lem}
\label{entout} {\rm (Lemma 4.1, \cite{li2014systematic}) } Let $l >
0$ be a constant independent of $\epsilon$. If {\bf H$^{1}$)} holds,
then there exist positive constants $\epsilon_{0}, R_{0}$ such that
$$
  \int_{|x| > R_{0}} u_{\epsilon}(x) \log u_{\epsilon}(x) \geq
  -\epsilon^{l}, \quad \epsilon \in (0, \epsilon_{0}) \,.
$$
\end{lem}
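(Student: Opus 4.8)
The plan is to exploit the super-Gaussian tail bound $\mu_{\epsilon}(\{|x|>r\}) \le e^{-r^{p}/\epsilon^{2}}$ furnished by {\bf H$^{1}$)}, which says that almost no mass reaches the far field. The integrand $u_{\epsilon}\log u_{\epsilon}$ is negative only where $u_{\epsilon}<1$, so the single danger is that a tiny amount of mass is spread thinly over a region of enormous (indeed infinite) Lebesgue volume. Since $t\mapsto t\log t$ is bounded below by $-1/e$ but is not integrable over an unbounded domain, one cannot estimate the tail integral in one stroke. Instead I would slice $\{|x|>R_{0}\}$ into unit-width annuli $A_{j}=\{R_{0}+j\le |x|<R_{0}+j+1\}$, $j\ge 0$, each of \emph{finite} volume $V_{j}\le C(R_{0}+j+1)^{n}$, and estimate the entropy contribution annulus by annulus.

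On each $A_{j}$, write $M_{j}=\mu_{\epsilon}(A_{j})$ and let $\phi(t)=t\log t$, a convex function. Jensen's inequality for the normalized Lebesgue measure on $A_{j}$ gives $\frac{1}{V_{j}}\int_{A_{j}}\phi(u_{\epsilon})\,dx\ge \phi\big(\frac{1}{V_{j}}\int_{A_{j}}u_{\epsilon}\,dx\big)$, that is,
\[
\int_{A_{j}} u_{\epsilon}\log u_{\epsilon}\,dx \;\ge\; M_{j}\log\frac{M_{j}}{V_{j}} \;=\; M_{j}\log M_{j}-M_{j}\log V_{j}.
\]
Here {\bf H$^{1}$)} supplies $M_{j}\le e^{-(R_{0}+j)^{p}/\epsilon^{2}}$, which for $\epsilon$ small lies below $1/e$; since $\phi$ is decreasing on $(0,1/e)$ this yields $M_{j}\log M_{j}\ge -\frac{(R_{0}+j)^{p}}{\epsilon^{2}}e^{-(R_{0}+j)^{p}/\epsilon^{2}}$, while $-M_{j}\log V_{j}\ge -e^{-(R_{0}+j)^{p}/\epsilon^{2}}\big(\log C+n\log(R_{0}+j+1)\big)$ because $\log V_{j}\ge 0$.

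Summing over $j$ then produces
\[
\int_{|x|>R_{0}} u_{\epsilon}\log u_{\epsilon}\,dx \;\ge\; -\sum_{j\ge 0} e^{-(R_{0}+j)^{p}/\epsilon^{2}}\Big(\tfrac{(R_{0}+j)^{p}}{\epsilon^{2}}+\log C+n\log(R_{0}+j+1)\Big),
\]
and the remaining task is to show the right-hand side is $\ge -\epsilon^{l}$ for $\epsilon$ small. Because the exponents $(R_{0}+j)^{p}$ increase with $j$ and are bounded below by $R_{0}^{p}$, the series converges and is $O\big(\epsilon^{-2}e^{-R_{0}^{p}/\epsilon^{2}}\big)$, dominated by its $j=0$ term up to a convergent factor. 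The main (and essentially only) obstacle is this last comparison, but it is settled by the classical fact that $e^{-c/\epsilon^{2}}$ decays faster than any power of $\epsilon$: for fixed $l$ and $c=R_{0}^{p}>0$ one has $\epsilon^{-2}e^{-c/\epsilon^{2}}\le \epsilon^{l}$ as soon as $\epsilon<\epsilon_{0}(l,R_{0},p)$, which delivers the stated bound.
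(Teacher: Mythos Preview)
The paper does not supply its own proof of this lemma; it is quoted verbatim from Part~I of the series (reference \cite{li2014systematic}) as a preliminary result, with no argument given here. Your proposal therefore cannot be compared against an in-text proof, but judged on its own merits it is correct. The annulus decomposition together with Jensen's inequality for the convex function $t\mapsto t\log t$ is a standard and clean way to control entropy integrals over unbounded regions: each annulus has finite volume, the tail bound from {\bf H$^{1}$)} controls the mass $M_{j}$, and the monotonicity of $t\log t$ on $(0,1/e)$ converts the mass bound into an entropy bound. Your summation estimate and the final observation that $\epsilon^{-2}e^{-R_{0}^{p}/\epsilon^{2}}=o(\epsilon^{l})$ for every fixed $l>0$ are both sound. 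One minor remark: the clause ``because $\log V_{j}\ge 0$'' is not strictly needed, since if some $V_{j}<1$ the term $-M_{j}\log V_{j}$ is nonnegative and only helps the lower bound; in any case, for $R_{0}$ large enough the annuli in $\mathbb{R}^{n}$ all have volume at least $1$, so the issue does not arise.
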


\begin{lem}
\label{entin} {\rm (Lemma 4.2, \cite{li2014systematic}) } Let $v(x)$
be a probability density function on $\mathbb{R}^{n}$. Let $\Omega$ be
a Lebesgue measurable compact set. Then there is
a constant $\delta_{0} > 0$ such that for each $\delta \in (0,
\delta_{0})$, if
$$
  \int_{\Omega} v(x) \mathrm{d}x \leq \delta \,,
$$
then
$$
  \int_{\Omega} v(x) \log v(x) \mathrm{d}x \geq -2\sqrt{\delta}.
$$
\end{lem}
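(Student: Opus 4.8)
The plan is to reduce the statement to a one-variable estimate via convexity of $\phi(t):=t\log t$, and then to absorb the leftover logarithmic term into the choice of $\delta_0$ using that $\Omega$ is compact, hence has finite Lebesgue measure $m:=m(\Omega)$.

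First I would record the single-variable inequality that does the work. Since $\phi(t)=t\log t$ is convex on $[0,\infty)$ with $\phi'(t)=\log t+1$, its graph lies above each of its tangent lines, so for every $a>0$,
\[
 \phi(v)\ \ge\ \phi(a)+(\log a+1)(v-a),\qquad v\ge 0.
\]
If $S:=\int_\Omega v\,\mathrm dx=0$ (in particular if $m(\Omega)=0$) the claim is trivial, so I may assume $S>0$ and $m>0$. Applying the tangent-line bound pointwise with the choice $a=S/m$ and integrating over $\Omega$ against Lebesgue measure, the linear term integrates to zero because $\int_\Omega v\,\mathrm dx=S=ma$, leaving
\[
 \int_\Omega v\log v\,\mathrm dx\ \ge\ m\,\phi(S/m)\ =\ S\log\frac{S}{m}\ =\ S\log S-S\log m.
\]
This is just Jensen's inequality for $\phi$ against the normalized measure $\mathrm dx/m$ on $\Omega$; phrasing it through the tangent line has the merit of sidestepping any integrability concern, as the right-hand side of the pointwise bound is manifestly integrable on the compact set $\Omega$.

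It then remains to estimate $S\log S-S\log m$ from below when $S\le\delta$ is small. For the first term I would invoke the elementary inequality $t\log t\ge-\sqrt t$ on $(0,1)$ (equivalently $2\sqrt t\,\log(1/\sqrt t)\le 1$, whose left side peaks at $\sqrt t=1/e$ with value $2/e<1$); since $S\le\delta<1$ this yields $S\log S\ge-\sqrt S\ge-\sqrt\delta$. For the second term $-S\log m$ I would split on the size of $m$: when $m<1$ it is nonnegative and costs nothing, while when $m\ge 1$ I bound $-S\log m\ge-\delta\log m$, which exceeds $-\sqrt\delta$ as soon as $\sqrt\delta\,\log m\le 1$. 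Choosing $\delta_0\in(0,1)$ small enough that $\sqrt{\delta_0}\,\log m\le 1$ (a threshold depending only on $m(\Omega)$) makes both terms at least $-\sqrt\delta$, and adding them gives $\int_\Omega v\log v\,\mathrm dx\ge-2\sqrt\delta$ for all $\delta\in(0,\delta_0)$.

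The only genuinely delicate point is the geometric term $-S\log m$: convexity delivers $S\log(S/m)$, not $S\log S$, and were $m(\Omega)$ allowed to grow relative to $\delta$ this term could destroy the $-2\sqrt\delta$ bound. The resolution is exactly that $\Omega$ is fixed and compact, so $m=m(\Omega)$ is a finite constant and $\delta_0$ may legitimately be taken as a function of it; the factor $2$ on the right-hand side is precisely the budget that lets one spend $\sqrt\delta$ on each of the two terms. Everything else is routine.
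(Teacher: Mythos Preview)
Your argument is correct. The Jensen step $\int_\Omega v\log v\,\mathrm dx\ge S\log(S/m)$ with $S=\int_\Omega v$ and $m=m(\Omega)$ is the right reduction, and your subsequent handling of the two terms $S\log S$ and $-S\log m$ is clean; the elementary bound $t\log t\ge-\sqrt t$ on $(0,1)$ (via $\max_{(0,1)}(-2u\log u)=2/e<1$) is exactly what is needed, and the dependence of $\delta_0$ on $m(\Omega)$ is correctly isolated.

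As for comparison with the paper: note that this lemma is stated here only as a preliminary result quoted from Part~I of the series (\cite{li2014systematic}, Lemma~4.2), so the present paper does not supply its own proof to compare against. Your self-contained argument via Jensen's inequality is an appropriate and standard route; the compactness of $\Omega$ enters only through $m(\Omega)<\infty$, which is the right observation.
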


\begin{lem}
\label{entbound} {\rm (Lemma 4.3, \cite{li2014systematic}) } If {\bf
H$^{1}$)} holds, then there is a constant $\epsilon_{0} > 0$ such
that $u_{\epsilon}(x) \leq \epsilon^{-(2n+1)}$ whenever $x \in
\mathbb{R}^{n}$ and $\epsilon \in (0, \epsilon_{0})$.

In addition,
there are positive constants $R_{0}$ and $p$ such that
$$
  u_{\epsilon}(x) \leq e^{-|x|^{p}/2\epsilon^{2}}
$$
for any $|x| > R_{0}$ and $\epsilon \in (0, \epsilon_{0})$.
\end{lem}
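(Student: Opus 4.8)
The plan is to derive both estimates from elliptic regularity for the stationary density, combined with the unit-mass normalization $\int u_\epsilon = 1$ and the exponential concentration encoded in \textbf{H$^1$)}. The density $u_\epsilon$ is a positive weak solution of the stationary equation $L_\epsilon u_\epsilon = 0$, which in divergence form reads $\partial_i\bigl(\tfrac12\epsilon^2 a_{ij}\partial_j u_\epsilon + b_i u_\epsilon\bigr)=0$ with $b_i = \tfrac12\epsilon^2\partial_j a_{ij} - f_i$; by \cite{bogachev2001regularity} this solution is in fact continuous. The governing observation is that the natural boundary-layer scale of this $\epsilon^2$-elliptic operator is $r=\epsilon^2$: freezing a base point $x_0$ and setting $v(y)=u_\epsilon(x_0+\epsilon^2 y)$, the rescaled equation becomes $\partial_{y_i}\bigl(\tfrac12 a_{ij}(x_0+\epsilon^2 y)\partial_{y_j}v + b_i(x_0+\epsilon^2 y)v\bigr)=0$, i.e.\ a uniformly elliptic divergence-form operator whose coefficients are bounded and, as $\epsilon\to 0$, essentially frozen, with $b_i\approx -f_i(x_0)$ on any fixed compact set.

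For the uniform bound I would first fix $R>0$ and work on $\{|x_0|\le R\}$, where $A=\sigma\sigma^\top$ is bounded and nonsingular (hence uniformly elliptic) and $f$ is bounded. On the rescaled unit ball the De Giorgi--Nash--Moser local boundedness estimate then gives $\sup_{B_{1/2}}v \le C_R\int_{B_1}v\,\mathrm{d}y$ with $C_R$ independent of $\epsilon$ and of $x_0\in\{|x_0|\le R\}$. Undoing the scaling, $\int_{B_1}v\,\mathrm{d}y = \epsilon^{-2n}\int_{B(x_0,\epsilon^2)}u_\epsilon\,\mathrm{d}x \le \epsilon^{-2n}$, so $u_\epsilon(x_0)=v(0)\le C_R\epsilon^{-2n}$. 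Taking $\epsilon$ small enough to absorb $C_R\le \epsilon^{-1}$ yields $u_\epsilon(x_0)\le \epsilon^{-(2n+1)}$ on $\{|x_0|\le R\}$; the exponent $2n+1$ is precisely the Jacobian factor $2n$ from the rescaling plus one unit of slack for the regularity constant. Points with $|x_0|>R$ are covered by the (much smaller) tail bound below, so the uniform estimate holds on all of $\R^n$.

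For the tail estimate I would convert the measure decay of \textbf{H$^1$)} into pointwise density decay through the same local boundedness inequality. For $|x_0|=r_0$ large one has $B(x_0,\epsilon^2)\subset\{|y|>r_0-1\}$, so \textbf{H$^1$)} gives $\int_{B(x_0,\epsilon^2)}u_\epsilon\,\mathrm{d}x = \mu_\epsilon(B(x_0,\epsilon^2)) \le e^{-(r_0-1)^p/\epsilon^2}$, and the rescaled Moser estimate then produces $u_\epsilon(x_0)\le C\,\epsilon^{-2n}e^{-(r_0-1)^p/\epsilon^2}$. For $r_0$ large and $\epsilon$ small the polynomial prefactor $\epsilon^{-2n}$ and the passage from $(r_0-1)^p$ to $\tfrac12 r_0^p$ are absorbed into the exponent, giving $u_\epsilon(x_0)\le e^{-|x_0|^p/2\epsilon^2}$, after slightly decreasing $p$ and enlarging $R_0$ if needed.

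I expect the main obstacle to be controlling the local boundedness constant uniformly \emph{as $|x_0|\to\infty$}, where the frozen drift $f(x_0)$ in the rescaled operator is no longer bounded and the Moser constant $C$ may grow with $|f(x_0)|$. Two remedies are available. The first is to use a divergence-form Moser estimate whose constant depends only on a scale-invariant norm of the lower-order coefficient — which at scale $\epsilon^2$ stays controlled — and then check that the residual growth in $r_0$ is dominated by the Gaussian gain $e^{-r_0^p/\epsilon^2}$ supplied by \textbf{H$^1$)}. The second, avoiding regularity constants altogether, is to build an explicit exponential supersolution of $L_\epsilon$ from the Lyapunov function $U$ of \textbf{H$^2$)} and to dominate $u_\epsilon$ on exterior domains by the maximum principle, using the uniform bound on $\{|x|=R\}$ as inner boundary data. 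The first route keeps the argument inside the regularity framework, while the second is more robust to the growth of $f$ at the cost of invoking the additional structure of \textbf{H$^2$)}.
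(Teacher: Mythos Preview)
This paper does not prove the lemma: it is quoted verbatim as Lemma~4.3 of Part~I \cite{li2014systematic} and imported by citation only, so there is no in-paper argument to compare yours against.

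As a standalone proof sketch your strategy is the natural one and is essentially correct. Rescaling by $r=\epsilon^{2}$ turns the stationary equation into a uniformly elliptic divergence-form equation with order-one coefficients on the unit ball; the De~Giorgi--Nash--Moser local-maximum estimate $\sup_{B_{1/2}}v\le C\int_{B_1}v$ then converts the unit mass $\int u_\epsilon=1$ into $u_\epsilon(x_0)\le C\epsilon^{-2n}$, and one unit of slack absorbs $C$ to give the exponent $2n+1$. For the tail you feed in the measure bound from \textbf{H$^1$)} instead of the unit mass, and the exponential gain $e^{-|x_0|^p/\epsilon^2}$ swallows the polynomial prefactor.

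You have also correctly isolated the only genuine difficulty: at a far-away base point the rescaled lower-order coefficient is essentially $-f(x_0)$, and the Moser constant may grow with $|f(x_0)|$. Your first remedy (absorb this growth into the exponential gain) works provided $|f|$ grows at most polynomially, but note that \textbf{H$^1$)} alone does not bound $|f|$ at infinity, so this step implicitly uses more than the stated hypothesis; the lemma asks for a single $\epsilon_0$ valid for all $|x|>R_0$, and without a growth bound on $f$ the competition between the Moser constant and $e^{-|x|^p/\epsilon^2}$ cannot be settled uniformly. Your second remedy (an explicit barrier from the Lyapunov function) is cleaner but invokes \textbf{H$^2$)} rather than \textbf{H$^1$)}. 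In short, the skeleton is right and the one honest gap is precisely the one you flag; to close it under \textbf{H$^1$)} alone you would have to import whatever additional structure Part~I uses at this point.
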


\section{Degeneracy and complexity}\label{dandc}

In this section, we give quantitative definitions of degeneracy and
complexity for a biological network modeled by a system of ordinary
differential equations. Some fundamental properties of these
quantities will  be investigated.

\subsection{Quantifying degeneracy and complexity} We first define
 degeneracy and  complexity for a SDE system \eqref{SDE1} with
respect to  a fixed $\epsilon$ and a fixed noise matrix $\sigma$.
Let $\epsilon$ and  $\sigma$ be fixed in \eqref{SDE1} and assume
that  the corresponding Fokker-Planck equation \eqref{FPE1} admits a
unique stationary measure  $\mu=\mu_{\epsilon,\sigma}$. It follows
from the regularity theorem in \cite{bogachev2001regularity} that  $\mu$ admits a
density function which we denote by $u(x)$, $x\in \mathbb{R}^n$.

Let $I$ be a coordinate subspace, i.e., a subspace of $\R^n$ spanned
by some of the standard unit vectors $\{e_1,\cdots,e_n\}$.  Denote
$J$ as the orthogonal complement of $I$.  If $x_1$, $x_2$ denote the
coordinates of $I$, $J$ respectively, then the marginal distribution
with respect to $I$ reads
    \[{{u}_{I}}(x_1)=\int_{J} u(x_{1}, x_{2}) \text{d}x_{2},\]
and we can define  the {\em projected entropy on $I$}   by
    \[ H(I)=-\int_{I}{{u_{I}}}(x_{1})\log {u_{I}}(x_{1})dx_1,\]
which roughly measures  the uncertainty (amount of information) of
the $I$-component of  the random variable generated by \eqref{SDE1}.

For any two such coordinate subspaces ${{I}_{1}},{{I}_{2}}$, since
$H({{I}_{1}}\oplus {{I}_{2}})=H({{I}_{2}}\oplus {{I}_{1}})$, we
can define this quantity as the {\em joint entropy} between $I_1$
and $I_2$, denoted in short by $ H({{I}_{1}},{{I}_{2}})$. The {\em
mutual information among subspaces ${{I}_{1}},{{I}_{2}}$} is defined
by

    \[M({{I}_{1}};{{I}_{2}})=  H({{I}_{1}})+
    H({{I}_{2}})- H({{I}_{1}},{{I}_{2}}).\]
It is easy to see that
\begin{equation}
\label{minf}
 MI({{I}_{1}};{{I}_{2}})=\int_{{{I}_{1}}\oplus
          {{I}_{2}}}{{{u }_{{{I}_{1}},{{I}_{2}}}}}(x_{1},x_{2})\log
        \frac{{{u }_{{{I}_{1}},{{I}_{2}}}}(x_{1},x_{2})}{{{u
            }_{{{I}_{1}}}}(x_{1}){{u
            }_{{{I}_{2}}}}(x_{2})}\text{d}x_{1}\text{d}x_{2} \,.
\end{equation}
 Statistically, the mutual information \eqref{minf}
measures the correlation between marginal distributions with respect
to subspaces $I_1$ and $I_2$.

\medskip

Now let $\mathcal{O}$ be a fixed coordinate subspace of $R^n$,
viewed as an {\em output set}, and  $I$ be the orthogonal complement
of $\mathcal{O}$, viewed as the {\em input set}. To measure the
noise impacts on all possible components of the input set, we
consider an arbitrary  $k$-dimensional coordinate subspace $I_{k}$
of $I$ and denote its orthogonal complement in $I$ by $I_{k}^{c}$.
The {\em multivariate mutual information}, or the {\em interacting
information} among ${{I}_{k}}$, $I_{k}^{c}$ and $\mathcal{O}$ is
defined by
\begin{equation}
\label{degeneracy}
 MI(I_{k}; I_{k}^{c};\mathcal{O}) :=MI({{I}_{k}};\mathcal{O})+MI(I_{k}^{c};
\mathcal{O})-MI(I;\mathcal{O}) \,.
\end{equation}
Note that if $k = 0$, we have $MI(I_{k};
  I_{k}^{c};\mathcal{O}) = 0$. We refer readers to
  \cite{yeung2002first} for further properties of the multivariate
  mutual information.

Similar to the case of neural systems studied in
\cite{tononi1999measures}, we define the {\em degeneracy associated
with $\mathcal{O}$} by averaging all the multivariate mutual information
among all possible coordinate subspaces of $I$, i.e.,
\begin{equation}
\label{degeneracy2} D(\mathcal{O})=\langle
MI(I_{k};I_{k}^{c},\mathcal{O})\rangle := \sum_{0\leq k \leq |I|}
\frac{1}{2
  {|I| \choose k}}\max
        \{MI(I_{k}; I^{c}_{k};\mathcal{O}),0\} \,.
\end{equation}
Similarly, the {\it complexity \index{Complexity} $C(\mathcal{O})$
associated with $\mathcal{O}$} is defined by averaging all the
mutual information between ${{I}_{k}}$ and $I_{k}^{c}$, i.e.,
\begin{equation}
 \label{complexity}
C(\mathcal{O})=\langle MI({{I}_{k}};I_{k}^{c})\rangle =\sum_{0 \leq
k \leq |I|} \frac{1}{2 {|I| \choose k}} MI({{I}_{k}};I_{k}^{c}).
\end{equation}
For a biological network, the complexity measures how much the
co-dependency in a network appears among different modules rather
than different elements.

However, differing from the case of neural system, output sets in an
(evolutionary) biological network modeled by a system of ODEs are varying.
This motivates the following definition.

\begin{dfn}
1) For fixed diffusion matrix $\sigma $ and $\epsilon >0$, the {\em
$\{\sigma,\epsilon\}$-degeneracy ${{\mathcal{D}}_{\epsilon ,\sigma
}}$} and  {\em $\{\sigma,\epsilon\}$-complexity
${{\mathcal{C}}_{\epsilon ,\sigma }}$} of the system \eqref{ODE1}
are defined by

 \[{{\mathcal{D}}_{\epsilon ,\sigma
   }}=\underset{\mathcal{O}}{\max}\,D(\mathcal{O}),\]
\[{{C}_{\epsilon
     ,\sigma }}=\underset{\mathcal{O}}{\max}\,C(\mathcal{O}).\]

2) For fixed diffusion matrix $\sigma $, the {\em
$\sigma$-degeneracy ${{\mathcal{D}}_{\sigma }}$} and (structural)
{\em $\sigma$-complexity ${{\mathcal{C}}_{\sigma }}$} of the system
\eqref{ODE1} are defined by

 \[\mathcal{D}_{\sigma} =
\liminf_{\epsilon\rightarrow 0} \mathcal{D}_{\epsilon,\sigma},\]
\[\mathcal{C}_{\sigma} =
\liminf_{\epsilon\rightarrow 0} \mathcal{C}_{\epsilon,\sigma}.\]

3) The {\em degeneracy} $\mathcal D$ and the (structural) {\em
complexity} $\mathcal C$ of system \eqref{ODE1} are defined by
\[
  \mathcal{D} = \sup_{\|\sigma\| = 1} \mathcal{D}_{\sigma},
\]
\[
  \mathcal{C} = \sup_{\|\sigma\| = 1} \mathcal{C}_{\sigma}.
\]

4) We call a differential system \eqref{ODE1} {\em
$\sigma$-degenerate} (resp. {\em $\sigma$-complex}) with respect to
a perturbation matrix $\sigma $ if there exists ${{\epsilon
  }_{0}}$, such that ${{\mathcal{D}}_{\epsilon ,\sigma }}>0$
(resp. ${{\mathcal{C}}_{\epsilon ,\sigma }}>0$) for all $0<\epsilon
<{{\epsilon }_{0}}$. The system \eqref{ODE1} is said to be {\em
degenerate} (resp. {\em complex}) if $\mathcal D>0$ (resp. $\mathcal
C>0$).

\end{dfn}

\begin{rem}  1) A common output set is necessary to
    quantify the degeneracy. Inspired by \cite{tononi1999measures}, we
    use multivariate information to measure how much more correlation
    the inputs $I_k$ and $I_k^c$ share with output $\cal O$ than
    expected. Biologically, the multivariate mutual information $MI(I_{k},
    I_{k}^{c}, \mathcal{O})$ measures how much $I_k$ and $I_k^c$ are
    structurally different but perform the same function at the output set
    $\mathcal{O}$. Similarly, by taking the average over all possible
    decomposition of the input set, $\mathcal{D}(\mathcal{O})$ measures the
    ability of structurally different components in a network to
    perform similar function on designated output set.

2) The purpose of injecting external fluctuation is to detect
interactions among the network. When the injected noise at distinct
directions are not independent, the measured interactions
(degeneracy) may be polluted by the correlations among the external
fluctuations. See Remark 5.1 for further discussion. Hence in application, we usually adopt additive white
noise, i.e., let $\sigma = Id$ and study $D_{Id}$.

3) In biological applications, one can estimate the degeneracy (in
various meanings above) by selecting suitable output space as the
natural space containing  ``observable'' elements (see \cite{tononi1999measures} for
an example of a signaling network).

4) We remark that degeneracy and complexity depends on the choice of
coordinate systems. Both degeneracy and complexity measure the statistical
dependence between modules of networks. This statistical dependence is
determined by both dynamics of underlying equations and the choice of
observables. A change of coordinates means a change of the observables, which may affect the statistical dependence between modules
of observables. For example random variables $X_{1} + X_{2}$ and $X_{1} - X_{2}$ may have
a strictly positive mutual information even if $X_{1}$ and $X_{2}$ are
independent. In application, we usually use the natural coordinates
which is generated by nodes of networks.

\end{rem}

\subsection{Persistence of degeneracy and complexity}

The following lemma gives bounds of projected density function.

\begin{lem}
\label{projbounds} Assume {\bf H$^{1}$)} holds and let $u_{I}$ be
the projected density function onto a coordinate subspace $I$. Then
there exist positive numbers $\epsilon_{0}$, $p$ and $R$, such that
for any $\epsilon \in (0, \epsilon_{0})$,  $u_{I}(x_1) <
e^{-|x_1|^{p}/2\epsilon^{2}} \ \mathrm{ when} \ |x_1| > R$ and
$u_{I}(x_1) < \epsilon^{-(2n+2)}\ \mathrm{ when}\ |x_1| \leq R$.
\end{lem}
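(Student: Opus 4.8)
The plan is to write the marginal as $u_I(x_1)=\int_J u_\epsilon(x_1,x_2)\,\mathrm{d}x_2$, where $x_2$ ranges over the orthogonal complement $J$ of $I$, of dimension $d_J$, and to estimate this fibre integral using the two global bounds on $u_\epsilon$ furnished by Lemma~\ref{entbound}: the uniform bound $u_\epsilon\le\epsilon^{-(2n+1)}$, and the tail bound $u_\epsilon(x)\le e^{-|x|^{p}/2\epsilon^{2}}$ valid for $|x|>R_0$. The one geometric fact I will use repeatedly is $|x|^{2}=|x_1|^{2}+|x_2|^{2}\ge\max\{|x_1|^{2},|x_2|^{2}\}$, which makes the tail bound available for every $x_2$ as soon as $|x_1|>R_0$, and for every $x_1$ as soon as $|x_2|>R_0$.

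For the polynomial bound I would argue uniformly in $x_1$ (so in particular for $|x_1|\le R$), splitting the integral at $|x_2|=R_0$. On $\{|x_2|\le R_0\}$ I bound $u_\epsilon$ by $\epsilon^{-(2n+1)}$, so this piece is at most $C_1\epsilon^{-(2n+1)}$ with $C_1$ the volume of the $R_0$-ball in $J$. On $\{|x_2|>R_0\}$ one has $|x|\ge|x_2|>R_0$, hence $u_\epsilon(x_1,x_2)\le e^{-|x_2|^{p}/2\epsilon^{2}}$, and $\int_{\R^{d_J}}e^{-|x_2|^{p}/2\epsilon^{2}}\,\mathrm{d}x_2=C_2\epsilon^{2d_J/p}\le C_2$ for $\epsilon<1$. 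Thus $u_I(x_1)\le C_1\epsilon^{-(2n+1)}+C_2$; since $C_1\epsilon^{-(2n+1)}=C_1\epsilon\cdot\epsilon^{-(2n+2)}$ and $C_2$ is a constant, both summands drop below $\tfrac12\epsilon^{-(2n+2)}$ once $\epsilon$ is small enough, giving $u_I(x_1)<\epsilon^{-(2n+2)}$.

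For the decay bound I take $R\ge R_0$ and consider $|x_1|>R$, so that $|x|\ge|x_1|>R_0$ for all $x_2$ and $u_I(x_1)\le\int_J e^{-(|x_1|^{2}+|x_2|^{2})^{p/2}/2\epsilon^{2}}\,\mathrm{d}x_2$. Invoking $(|x_1|^{2}+|x_2|^{2})^{p/2}\ge\max\{|x_1|^{p},|x_2|^{p}\}$, I split at $|x_2|=|x_1|$: on $\{|x_2|\le|x_1|\}$ the exponent is at least $|x_1|^{p}$, contributing $e^{-|x_1|^{p}/2\epsilon^{2}}$ times the volume $C|x_1|^{d_J}$; on $\{|x_2|>|x_1|\}$ the exponent is at least $|x_2|^{p}$, and factoring $e^{-|x_1|^{p}/2\epsilon^{2}}$ out of the resulting tail integral leaves a factor that, for $\epsilon\le\epsilon_0$, is dominated by the $\epsilon=\epsilon_0$ case and is then polynomial in $|x_1|$. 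Altogether $u_I(x_1)\le P(|x_1|)\,e^{-|x_1|^{p}/2\epsilon^{2}}$ for a polynomial $P$ independent of $\epsilon$.

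The main obstacle is this final step of converting the estimate into the clean form $e^{-|x_1|^{q}/2\epsilon^{2}}$ while preserving the constant $\tfrac12$ and the uniformity over $\epsilon\in(0,\epsilon_0)$. I would fix any exponent $q\in(0,p)$ and note that $P(|x_1|)\le e^{(|x_1|^{p}-|x_1|^{q})/2\epsilon^{2}}$ for all large $|x_1|$, uniformly in $\epsilon\le\epsilon_0$: the right-hand side is smallest at $\epsilon=\epsilon_0$, where it still grows super-polynomially in $|x_1|$ and hence eventually beats $P$. Multiplying through replaces $p$ by $q$ at the cost of enlarging $R$, and after renaming $q$ as $p$ this is exactly the asserted bound. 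The two regimes are consistent because $e^{-|x_1|^{p}/2\epsilon^{2}}$ lies far below $\epsilon^{-(2n+2)}$ once $|x_1|>R$.
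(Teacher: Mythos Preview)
Your proof is correct. For the interior bound $u_I(x_1)<\epsilon^{-(2n+2)}$ your argument is essentially identical to the paper's: both split the fibre integral at $|x_2|=R_0$, use the uniform bound $u_\epsilon\le\epsilon^{-(2n+1)}$ from Lemma~\ref{entbound} on the inner piece, and the pointwise tail bound on the outer piece.

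For the tail estimate, however, you take a genuinely different route. The paper observes that the marginal measure inherits the integral tail bound $\int_{I\setminus B(0,r)}u_I<e^{-r^{p_0}/\epsilon^2}$ directly from {\bf H$^1$)}, and then invokes Lemma~\ref{entbound} (really, its proof from Part~I) to upgrade this to a pointwise bound on $u_I$. You instead integrate the \emph{pointwise} tail bound $u_\epsilon(x)\le e^{-|x|^{p}/2\epsilon^2}$ along the fibre, using $|x|\ge|x_1|$ to activate it everywhere once $|x_1|>R_0$, and then absorb the resulting polynomial prefactor $P(|x_1|)$ by lowering the exponent from $p$ to some $q<p$. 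Your approach is more self-contained: the paper's invocation of Lemma~\ref{entbound} for the marginal is somewhat informal, since that lemma as stated concerns the stationary density of the Fokker--Planck equation and $u_I$ does not itself solve an FPE, so one is really re-running whatever estimate underlies the lemma. Your direct computation avoids this and makes the loss in the exponent explicit. The paper's approach, on the other hand, would in principle give a sharper exponent if the machinery behind Lemma~\ref{entbound} transfers cleanly to marginals.
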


\begin{proof}
Since $u_{I}$ is the projection of $u$, $u_{I}$ has the same tail as
$u$. More precisely, it follows from {\bf H$^{1}$)} that there are
constants $p_{0}, R_{0} > 0$ such that
$$
  \int_{I \setminus B(0, r)} u_{I}(x_1) \mathrm{d}x_1 <
  e^{-|r|^{p_{0}}/\epsilon^{2}}
$$
for all $r > R_{0}$ and all $\epsilon \in (0, \epsilon^{*})$. By
Lemma ~\ref{entbound}, there exist positive numbers $\epsilon_{1}$,
$p$ and $R$, such that $u_{I}(x_1) < e^{-|x_1|^{p}/2\epsilon^{2}} \
\mathrm{ as } \ |x_1| > R$, for all $\epsilon \in (0,
\epsilon_{1})$, where $R = R_{0} + 1$.

\medskip

Using Lemma ~\ref{entbound}  one can make $\epsilon$ sufficiently
small such that $u(x) < \epsilon^{-(2n+1)}$ for all $x \in B(0, R)$.
Then it is easy to see from the definition of $u_I$  that
$$
  u_{I}(x_1) \leq C(R)\epsilon^{-(2n+1)} + \int_{J} e^{-|x_2|^{p}/2\epsilon^{2}} \mathrm{d}x_2
$$
for all $|x_1| \le R$, where $C(R)$ is the volume of ball with
radius $R$ in $J$. Hence for sufficient small $\epsilon$
 $u_{I}(x_1)$ is smaller than $\epsilon^{-(2n+2)} $ as $|x_1| \le R$.

\end{proof}

We now give the result  below concerning the persistence of
degeneracy and complexity.

\begin{thm}
\label{entper} Let $f_{l}$, $l \geq 1$ be a sequence of drift fields
such that $f_{l} \rightarrow f$ uniformly in $C^{2}$ norm. For any
fixed $0<\epsilon \ll 1$, denote the $\epsilon, \sigma$-degeneracy
with respect to \eqref{SDE1} with drift fields $f_{l}$ and $f$ by
$\mathcal{D}^{l}_{\epsilon, \sigma}$ and $\mathcal{D}_{\epsilon,
  \sigma} $ respectively. If condition {\bf H$^{1}$)} is uniformly satisfied by
equations \eqref{FPE1} with drift fields $\{f_{l}\}_{l \geq 1}$ and
$f$, then
$$
\lim_{l \rightarrow \infty}  \mathcal{D}^{l}_{\epsilon, \sigma} =
  \mathcal{D}_{\epsilon, \sigma}.
$$
\end{thm}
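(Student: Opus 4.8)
The plan is to reduce the statement to the convergence of the stationary densities and then of their projected differential entropies, exploiting the fact that $\mathcal{D}_{\epsilon,\sigma}$ is assembled from these entropies through finitely many continuous operations. Write $u^l_\epsilon$, $u_\epsilon$ for the stationary densities of \eqref{FPE1} with drifts $f_l$, $f$, and $\mu^l_\epsilon$, $\mu_\epsilon$ for the corresponding measures. Since each $MI(I_1;I_2)=H(I_1)+H(I_2)-H(I_1,I_2)$ is a fixed linear combination of projected entropies, each multivariate mutual information in \eqref{degeneracy} is a fixed linear combination of such $MI$'s, each $D(\mathcal{O})$ in \eqref{degeneracy2} is a finite sum of terms $\max\{\cdot,0\}$ over the finitely many subspaces $I_k$, and $\mathcal{D}_{\epsilon,\sigma}=\max_{\mathcal{O}}D(\mathcal{O})$ is a maximum over the finitely many coordinate output subspaces $\mathcal{O}$. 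As $x\mapsto\max\{x,0\}$ and the finite maximum are continuous, it suffices to show that for every fixed pair of coordinate subspaces the projected entropy $H^l(\cdot)$ computed from $u^l_\epsilon$ converges to the one computed from $u_\epsilon$.

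First I would establish weak$^*$ convergence $\mu^l_\epsilon\to\mu_\epsilon$. The uniform validity of \textbf{H}$^1$) gives $\mu^l_\epsilon(\{|x|>r\})\le e^{-r^p/\epsilon^2}$ with constants independent of $l$, so $\{\mu^l_\epsilon\}_l$ is tight and, by Prokhorov's theorem (Theorem \ref{Prokh}), relatively sequentially compact. Any weak$^*$ limit $\mu_*$ of a subsequence is stationary for the limiting equation: for fixed $h\in C_0^\infty(\R^n)$ the adjoint operators satisfy $\mathcal{L}^l_\epsilon h\to\mathcal{L}_\epsilon h$ uniformly, because $f_l\to f$ uniformly while the diffusion coefficients $a_{ij}$ are unchanged, so passing to the limit in $\int\mathcal{L}^l_\epsilon h\,\mathrm{d}\mu^l_\epsilon=0$ yields $\int\mathcal{L}_\epsilon h\,\mathrm{d}\mu_*=0$ for all $h$. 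The uniqueness clause of \textbf{H}$^1$) for the drift $f$ then forces $\mu_*=\mu_\epsilon$, and since every subsequence has this same limit, the full sequence converges.

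The main obstacle is that weak$^*$ convergence alone does not control the differential entropy, which is not weak$^*$ continuous; I must upgrade the convergence and simultaneously tame the tails. For the first point, the densities solve, in the weak sense, the uniformly elliptic stationary equation $L^l_\epsilon u^l_\epsilon=0$ with coefficients bounded in $C^1$ uniformly in $l$ (here $\sigma\sigma^\top$ is nondegenerate on compacts and $f_l\to f$ in $C^2$); interior elliptic estimates give locally uniform $C^{1,\alpha}$ bounds, so by Arzel\`a--Ascoli a subsequence of $u^l_\epsilon$ converges locally uniformly, necessarily to $u_\epsilon$ by the identification above, whence the whole sequence converges locally uniformly. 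The projected densities then converge locally uniformly as well, using the uniform Gaussian tail of Lemma \ref{projbounds} as a dominating function in the complementary integration. For the second point, on any ball $|x_1|\le R$ the projected densities are uniformly bounded by Lemma \ref{projbounds}, and since $t\mapsto t\log t$ is uniformly continuous on bounded intervals, $u^l_I\log u^l_I\to u_I\log u_I$ uniformly there; outside the ball the projected tail mass is exponentially small by \textbf{H}$^1$), so by Lemma \ref{entin} the entropy contribution there is uniformly small (cf. Lemma \ref{entout}). Splitting the entropy integral at $|x_1|=R$, letting first $l\to\infty$ and then $R\to\infty$, gives $H^l(I)\to H(I)$.

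Finally I would assemble the pieces: convergence of all projected entropies yields convergence of every $MI$ and of every multivariate mutual information, hence $D^l(\mathcal{O})\to D(\mathcal{O})$ through the continuous maps $\max\{\cdot,0\}$ and the finite averaging in \eqref{degeneracy2}, and therefore $\mathcal{D}^l_{\epsilon,\sigma}=\max_{\mathcal{O}}D^l(\mathcal{O})\to\max_{\mathcal{O}}D(\mathcal{O})=\mathcal{D}_{\epsilon,\sigma}$, the maximum being over the finite collection of coordinate subspaces. The same argument applies verbatim to the complexity.
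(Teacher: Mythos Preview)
Your proposal is correct and follows the same skeleton as the paper: uniform \textbf{H}$^1$) gives tightness, Prokhorov plus uniqueness of the stationary measure forces weak convergence $\mu^l_\epsilon\to\mu_\epsilon$, and then one shows convergence of all projected entropies, from which the convergence of $\mathcal{D}_{\epsilon,\sigma}$ follows by the finite continuous operations you describe. The only substantive difference is in how the entropy convergence is carried out. The paper argues in one stroke via dominated convergence: Lemma~\ref{entbound} (and Lemma~\ref{projbounds} for the marginals) supplies a single integrable envelope $M(x)$, equal to $\epsilon^{-(2n+1)}$ on a large ball and $e^{-|x|^p/2\epsilon^2}$ outside, with $|M\log M|+M\in L^1$ dominating $|u_l\log u_l|$ uniformly in $l$; once $u_l\to u$ pointwise the DCT gives $\int u_l\log u_l\to\int u\log u$ directly, and likewise for each projection. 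Your route---interior elliptic estimates to upgrade weak convergence to locally uniform convergence of densities, then splitting the entropy integral at radius $R$ with uniform continuity of $t\mapsto t\log t$ inside and the tail lemmas outside---also works but is more elaborate; on the other hand, your elliptic-regularity step makes explicit why weak convergence of the measures entails pointwise convergence of the densities, a passage the paper simply asserts.
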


\begin{proof}

Denote the stationary probability measure of equation \eqref{FPE1}
with drift fields $\{f_{l}\}$ and $f$ by $\mu_{l}$ and $\mu$
respectively. Denote $u_{l}$ and $u$ as the corresponding density
functions.

Since {\bf H$^{1}$)} is uniformly satisfied, it is easy to see that the
sequence $\{\mu_{l}\}$ is tight. By Theorem 2.4,
$\{\mu_{n}\}$ is sequentially compact in the space of probability
measures on $\mathbb{R}^{n}$ equipped with the weak-* topology. We
note that each $\mu_l$ satisfies
\begin{equation}\label{mun1}
  \int_{\R^n} \mathcal{L}_{\epsilon}h(x) \mu_{l}( \mathrm{d}x) = 0,
    \qquad\; \forall h \in C_{0}^{\infty}( \mathbb{R}^{n}).
\end{equation}
Let $\mu_{*}$ be a limit point of $\{\mu_{l}\}$ and  $ \{\mu_{l_k}
\}$ be a subsequence of $\{\mu_{l}\}$ that converges to $\mu_{*}$
weakly. Since $\{f_{l}\}$ are uniformly bounded and $h \in
C^{\infty}_{0}( \mathbb{R}^{n})$, applying the dominated convergence
theorem to \eqref{mun1} shows that $\mu_{*}$ is the stationary
probability measure of \eqref{FPE1}. It follows from  the uniqueness
of stationary probability measure that $\mu = \mu_{*}$.
Consequently, $\mu_l$ converges to $\mu$ weakly as $l\to\infty$. It
follows that $u_l\to u$, as $l\to\infty$, pointwise in $\R^n$.

By Lemma ~\ref{entbound},  one can make $\epsilon$  sufficiently
small such that both $u(x)$ and $u_{n}(x)$ are  bounded from above
by
\[
M(x)=\left\{\begin{array}{ll} \epsilon^{-(2n+1)} , &\quad
\hbox{if} |x|<R_{0};\\
e^{-|x|^{p}/2\epsilon^{2}}, & \quad\hbox{if} |x| \geq R_{0},
\end{array}\right.
\]
 where $R_{0}$ and $p$ are constants in
{\bf H$^{1}$)}. Since $|x\log x|$ is increasing on both intervals
$(0,e^{-1})$ and $(1, +\infty)$, it is easy to see that $|u(x)\log
u(x)| \leq |M(x)\log M(x)| + M(x)$, $|u_{l}(x)\log u_{l}(x)| \leq |M(x)\log
M(x)|$ and
$$
  \int_{\mathbb{R}^{n}} \left ( |M(x) \log M(x) | + M(x) \right )\mathrm{d}x < \infty \,.
$$
Hence the dominated convergence theorem yields that
$$
 \lim_{l\rightarrow \infty} \int_{\mathbb{R}^{n}} u_{l}(x) \log
 u_{l}(x) \mathrm{d}x = \int_{\R^n} u(x) \log u(x)
 \mathrm{d}x \,.
$$
For any coordinate  subspace $I$ of $\mathbb{R}^{n}$, a similar
argument and Lemma 3.3 shows that
$$
 \lim_{l\rightarrow \infty} \int_{I} (u_{l})_{I}(x_1) \log
 (u_{l})_{I}(x_1) \mathrm{d}x_1 = \int_{I} u_{I}(x_1) \log u_{I}(x_1)
 \mathrm{d}x_1 \,.
$$

The theorem now follows easily from the definitions of
$\mathcal{D}^{l}_{\epsilon,\sigma}$ and
$\mathcal{D}_{\epsilon,\sigma}$.
\end{proof}

Theorem ~\ref{entper} only holds for  fixed $\epsilon$ and $\sigma$.
We will see in Section 5 that even for fixed $\sigma$, the
continuous dependence of $\sigma$-degeneracy on $f$ will require
additional conditions.

\section{Robustness}
\label{robustness}

In this section, we introduce and discuss various notions of
robustness for a global attractor of an ODE system from different
perspectives, which  can be used as useful systematic measures of a
biological network. These notions will be
introduced to measure the strength of attraction of the global
attractor because a stronger attractor tends to have a  better
ability to remain stable under noise perturbations.

\subsection{Uniform Robustness} Uniform robustness describes
the uniform attracting strength of  the global attractor $\cal A$ of
system \eqref{ODE1}.

Assume that $\cal A$ is a strong attractor, i.e.,  there is a
neighborhood $\cal N$ of $\cal A$, called an {\it isolating neighborhood},
a smooth function $U$ on $\cal N$, called a strong Lyapunov
function, and a constant $\gamma_{0}>0$, called Lyapunov function,
such that
  $\nabla U(x)\ne 0$, $x\in  \cal N\setminus \cal A$, and
$$
  f(x) \cdot \nabla U(x) \leq - \gamma_{0} |\nabla U(x)|^{2}, \;\qquad x \in
  N \setminus \cal A.
$$
Any nonnegative  constant $\alpha$ such that
 \[
 \frac{\nabla U(x)}{|\nabla U(x)|}\cdot f(x)\le -\alpha
        \text{dist}(x,\mathcal{A}),\ \; {\rm  \forall} \ x\in \mathcal{N}
        \]
is called an {\it index of $\cal A$ associated with $U$} or simply
an {\it index of $\cal A$} (note that $\alpha$ depends on both
choices of $\cal N$ and $U$).

\begin{dfn}
For a strong attractor $\mathcal{A}$ with index $\alpha$, the {\it
uniform robustness}\index{Uniform Robustness} of the strong
attractor $\mathcal{A}$ is the following quantity
 \[R_{u}=\sup \{ \alpha  : \, \alpha \ \hbox{ is an index of} \
        \mathcal{A}\}.\]

The system \eqref{ODE1} is said to be {\it robust} if $\mathcal{A}$
is a strong attractor and $R_{u} > 0$.
\end{dfn}

\medskip
\begin{pro}\label{prop4.1} If {\bf H$^{0}$)} holds, then the system \eqref{ODE1} is robust.
\end{pro}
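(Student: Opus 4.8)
The statement contains two assertions: that $\mathcal{A}$ is a strong attractor, and that its uniform robustness $R_{u}$ is strictly positive. The first is immediate, since {\bf H$^{0}$)} furnishes exactly the data required in the definition of a strong attractor, namely an isolating neighborhood $\mathcal{N}$, a smooth strong Lyapunov function $W$ with $\nabla W \neq 0$ on $\mathcal{N}\setminus\mathcal{A}$, and a constant $\gamma_{0}>0$. For the second assertion, since $R_{u}$ is a supremum over admissible indices, it suffices to exhibit a single positive index $\alpha$. Dividing the strong Lyapunov inequality $f\cdot\nabla W \le -\gamma_{0}|\nabla W|^{2}$ by $|\nabla W|$ gives, on $\mathcal{N}\setminus\mathcal{A}$,
\[
\frac{\nabla W(x)}{|\nabla W(x)|}\cdot f(x) \le -\gamma_{0}\,|\nabla W(x)| \,.
\]
Thus the whole problem reduces to the pointwise lower bound $|\nabla W(x)| \ge c\,\mathrm{dist}(x,\mathcal{A})$ for some $c>0$: once this holds, $\alpha := c\gamma_{0}$ is an index of $\mathcal{A}$ and $R_{u} \ge \alpha > 0$.

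The plan for the gradient bound is to study the ratio $g(x) := |\nabla W(x)|/\mathrm{dist}(x,\mathcal{A})$ on $\mathcal{N}\setminus\mathcal{A}$ and show $\inf g > 0$. We may take $\mathcal{N}$ bounded, shrinking it if necessary, which is harmless since doing so only changes which admissible index we produce. Away from the attractor, on each compact set $\{x\in\overline{\mathcal{N}} : \mathrm{dist}(x,\mathcal{A})\ge\eta\}$, the function $g$ is continuous and strictly positive, because $\nabla W\neq 0$ off $\mathcal{A}$ and the denominator is bounded below by $\eta$; hence $g$ attains a positive minimum there. Consequently $\inf g$ can fail to be positive only along a sequence $x_{n}$ with $d_{n}:=\mathrm{dist}(x_{n},\mathcal{A})\to 0$, and it remains to rule out $g(x_{n})\to 0$ in that regime.

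This last step is the crux, and it is where the quadratic lower bound in {\bf H$^{0}$)} becomes indispensable. Assuming $W$ is $C^{2}$ up to $\mathcal{A}$ and normalizing $W=0$ on $\mathcal{A}$, so that $W\ge L_{1}\mathrm{dist}^{2}(\cdot,\mathcal{A})$ forces $\nabla W=0$ on $\mathcal{A}$, I would let $y_{n}\in\mathcal{A}$ be a nearest point to $x_{n}$ and set $\hat u_{n}=(x_{n}-y_{n})/d_{n}$. A second-order Taylor expansion of $W$ at $y_{n}$ along the segment to $x_{n}$, combined with $W(y_{n})=0$, $\nabla W(y_{n})=0$ and $W(x_{n})\ge L_{1}d_{n}^{2}$, shows that a $t$-weighted average $\bar H_{n}$ of $\mathrm{Hess}\,W$ along that segment satisfies $\hat u_{n}^{\top}\bar H_{n}\hat u_{n}\ge 2L_{1}$; passing to a convergent subsequence $\hat u_{n}\to\hat u_{*}$, $y_{n}\to y_{*}\in\mathcal{A}$ then yields $\hat u_{*}^{\top}\mathrm{Hess}\,W(y_{*})\,\hat u_{*}\ge 2L_{1}$. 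The first-order form of the same expansion gives $\nabla W(x_{n})/d_{n}\to \mathrm{Hess}\,W(y_{*})\,\hat u_{*}$, whose norm is at least $\hat u_{*}^{\top}\mathrm{Hess}\,W(y_{*})\,\hat u_{*}\ge 2L_{1}>0$, so $g(x_{n})\to|\mathrm{Hess}\,W(y_{*})\,\hat u_{*}|\ge 2L_{1}$, contradicting $g(x_{n})\to 0$.

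Combining the two regimes gives $\inf g>0$, hence the desired index and $R_{u}>0$. I expect the main obstacle to be precisely the near-$\mathcal{A}$ estimate of the last paragraph: a bare $C^{2}$ strong Lyapunov function can have $|\nabla W|/\mathrm{dist}\to 0$, so one genuinely needs the quadratic lower bound, and one must have enough regularity of $W$ together with control of the nearest-point direction to $\mathcal{A}$. This is routine when $\mathcal{A}$ is a stable equilibrium or a smooth submanifold, but requires care for a general global attractor.
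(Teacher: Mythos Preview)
Your proposal is substantially more detailed than the paper's own argument, which consists of a single sentence: ``The proposition follows easily from {\bf H$^{0}$)} and the definitions of strong attractor and robustness.'' The authors evidently regard the result as immediate, whereas you correctly isolate the one nontrivial step---the lower bound $|\nabla W(x)| \ge c\,\mathrm{dist}(x,\mathcal{A})$---and supply a genuine argument for it via a Taylor expansion and the quadratic lower bound $W \ge L_{1}\,\mathrm{dist}^{2}(\cdot,\mathcal{A})$ from {\bf H$^{0}$)}.

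Your argument is sound under the natural reading that $W$ is smooth on all of $\mathcal{N}$ (the paper does say ``a smooth function $U$ on $\mathcal{N}$'') and that $W$ is normalized to vanish on $\mathcal{A}$; with these in hand, the Hessian computation you sketch goes through. Your caveat at the end is apt: the near-$\mathcal{A}$ step is exactly where the quadratic lower bound is used, and without it (or without $C^{2}$ regularity up to $\mathcal{A}$) the conclusion could fail. In short, you have filled in what the paper leaves implicit; whether the authors' brevity is justified likely depends on conventions about strong Lyapunov functions carried over from Part~I of the series that are not restated here.
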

\begin{proof} The proposition follows easily from {\bf H$^{0}$)} and the definitions of
strong attractor and robustness.

\end{proof}

\subsection{2-Wasserstein Robustness}

Let $\mathcal{P}(\mathbb{R}^{n})$ denote the space of probability
measures on $\mathbb{R}^{n}$, endowed with the 2-Wasserstein metric
$d_{w}$. In the case of weak$^*$ convergence of $\mu_\epsilon$, as
$\epsilon\to 0$, the 2-Wasserstein distance between $\mu_\epsilon$
and its weak limit measure measures  certain  averaged persistence property of
$\cal A$ under the stochastic perturbations. We note from
\cite{huang5} that the limit of $\mu_\epsilon$ must be an
invariant measure of \eqref{ODE1} supported on $\cal A$.

\begin{dfn} The {\em 2-Wasserstein robustness} (or {\em average robustness}) $R_w$ of \eqref{ODE1} w.r.t. $\sigma$ is defined as the reciprocal of metric
derivative, i.e.,
\begin{displaymath}
   R_{w} = \inf_{\mu_{0} \in \mathcal{M}, \epsilon_{n} \rightarrow 0} \left \{\lim_{n \rightarrow \infty}
     \frac{\epsilon_{n}}{ \mathcal{W}( \mu_{\epsilon_{n}}, \mu_{0}) }
     \quad : \quad \mu_{\epsilon_{n}} \rightarrow \mu_{0} \mbox{ weakly as }
     \epsilon_{n} \rightarrow 0
   \right \} \,,
\end{displaymath}
where $\mathcal{M}$ is the set of sequential limit point of
$\{\mu_{\epsilon}\}$ as $\epsilon \to 0$. The system \eqref{ODE1}
is said to be {\it robust in the 2-Wasserstein sense w.r.t.
$\sigma$} if $R_{w} >0$.
\end{dfn}

Roughly speaking, 2-Wasserstein robustness gives the first order
expansion of $\mu_{\epsilon}$ in terms of $\epsilon$ in the
2-Wasserstein metric spaces.

\medskip

\begin{thm}
\label{w2MSD} If {\bf H$^{0}$)} and {\bf H$^{1}$)} hold, then $R_w$ is finite.
\end{thm}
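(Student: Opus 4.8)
The plan is to show that the ratio $\epsilon_n/\mathcal{W}(\mu_{\epsilon_n},\mu_0)$ is bounded above by a single fixed constant along \emph{every} admissible sequence, so that the infimum defining $R_w$ is finite. First I would check that the set over which the infimum is taken is nonempty: by {\bf H$^1$)} the family $\{\mu_\epsilon\}$ concentrates near $\mathcal A$ and is therefore tight, so by Prokhorov's Theorem (Theorem~\ref{Prokh}) it is relatively sequentially compact in $\mathcal P(\R^n)$. Hence $\mathcal M\neq\emptyset$, and for each $\mu_0\in\mathcal M$ there is a sequence $\epsilon_n\to 0$ with $\mu_{\epsilon_n}\to\mu_0$ weakly. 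Moreover each $\mu_{\epsilon_n}$ has finite second moment by the exponential tail in {\bf H$^1$)}, and $\mu_0$ is supported on the bounded global attractor $\mathcal A$ by \cite{huang5}, so all the Wasserstein distances in question are well defined.

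The core estimate is a lower bound $\mathcal{W}(\mu_{\epsilon_n},\mu_0)\ge c\,\epsilon_n$. Fix an admissible pair and let $r\in\mathcal{P}(\mu_{\epsilon_n},\mu_0)$ be any coupling. Since the second marginal of $r$ is $\mu_0$ and $\mathrm{supp}(\mu_0)\subset\mathcal A$, for $r$-a.e.\ $(x,y)$ we have $y\in\mathcal A$, whence $|x-y|\ge \mathrm{dist}(x,\mathcal A)$. Therefore
\[
 \int_{\R^n\times\R^n}|x-y|^2\,\mathrm dr \;\ge\; \int_{\R^n\times\R^n}\mathrm{dist}^2(x,\mathcal A)\,\mathrm dr
 \;=\; \int_{\R^n}\mathrm{dist}^2(x,\mathcal A)\,\mu_{\epsilon_n}(\mathrm dx) \;=\; V(\epsilon_n).
\]
Taking the infimum over all couplings $r$ and invoking the lower bound of Theorem~\ref{MSD} gives $\mathcal{W}^2(\mu_{\epsilon_n},\mu_0)\ge V(\epsilon_n)\ge V_2\,\epsilon_n^2$ for $\epsilon_n\in(0,\epsilon_0)$, so $\mathcal{W}(\mu_{\epsilon_n},\mu_0)\ge V_2^{1/2}\epsilon_n>0$ and the ratio is genuinely well defined.

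Consequently $\epsilon_n/\mathcal{W}(\mu_{\epsilon_n},\mu_0)\le V_2^{-1/2}$ for all large $n$, so the limit appearing in the definition of $R_w$ (whenever it exists) is at most $V_2^{-1/2}$; since every such ratio is nonnegative, the infimum satisfies $0\le R_w\le V_2^{-1/2}<\infty$. The argument is essentially routine once the coupling bound is in place; the only point that requires genuine care — and that I regard as the main obstacle — is justifying that every weak limit $\mu_0$ is carried by $\mathcal A$, so that the pointwise inequality $|x-y|\ge\mathrm{dist}(x,\mathcal A)$ holds $r$-almost everywhere. This is exactly what permits replacing the coupling cost by the mean-square displacement $V(\epsilon_n)$ and thereby transferring the sharp lower bound of Theorem~\ref{MSD} to the Wasserstein distance.
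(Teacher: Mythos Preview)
Your proof is correct and follows essentially the same route as the paper: both bound $\mathcal{W}^2(\mu_{\epsilon},\mu_0)$ from below by $V(\epsilon)=\int \mathrm{dist}^2(x,\mathcal A)\,\mu_\epsilon(\mathrm dx)$ using that $\mathrm{supp}(\mu_0)\subset\mathcal A$ (via \cite{huang5}), and then invoke Theorem~\ref{MSD}. The only cosmetic difference is that the paper appeals to Theorem~\ref{Monge} to pass through an optimal transport map $T$ and write $|x-T(x)|\ge\mathrm{dist}(x,\mathcal A)$, whereas you work directly with arbitrary couplings in the Kantorovich formulation; your version is slightly more elementary since it avoids the Monge--Kantorovich equivalence, and your explicit check that $\mathcal M\neq\emptyset$ via Prokhorov fills in a point the paper leaves implicit.
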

\begin{proof}
Without loss of generality, we assume that $R_w>0$. Then
$\mu_\epsilon$ converges to an invariant measure  $\mu_{0}$ of
\eqref{ODE1}, and it follows from  \cite{huang5} that
$\mathrm{supp}(\mu_{0})\subset \mathcal{A}$. Hence $\mu_{\epsilon}$
and $\mu_{0}$ satisfy conditions of Theorem ~\ref{Monge}.

\medskip

By Theorem ~\ref{Monge}, $\mathcal{W}^{2}(\mu_{\epsilon}, \mu_{0})$
solves the following Monge problem
\begin{displaymath}
\mathcal{W}^{2}(\mu_{\epsilon},\mu_{0}) = \inf_{T\sharp\mu_{\epsilon}
  = \mu_{0}}\int_{\R^n} |x - T(x)|^{2} \mathrm{d}x \,.
\end{displaymath}
Since $\mu_{0}$ is supported in $\mathcal{A}$, $T(x) \in
\mathcal{A}$ whenever $T\sharp \mu_{\epsilon} = \mu_{0}$. Therefore
\begin{displaymath}
   |x-T(x)|^{2} \geq \mathrm{dist}^{2}(x,\mathcal{A})
\end{displaymath}
for any map $T : \mathbb{R}^{n}\rightarrow \mathbb{R}^{n}$ that
satisfies $T\sharp \mu_{\epsilon} = \mu_{0}$. It follows that
\begin{equation}\label{WW}
   \mathcal{W}^{2}(\mu_{\epsilon},\mu_{0}) \geq \int_{\R^n}
   \mathrm{dist}^{2}(x,\mathcal{A}) \mu_{\epsilon}(\mathrm{d}x) \,.
\end{equation}

\medskip

By Theorem ~\ref{MSD}, there are positive constants $V_{2}$ and
$\epsilon_{0}$ such that
$$
  \int_{\R^n}
   \mathrm{dist}^{2}(x,\mathcal{A}) \mu_{\epsilon}(\mathrm{d}x)  \geq V_{2}\epsilon^{2}
$$
for all $\epsilon \in (0, \epsilon_{0})$. Thus as $\epsilon$
approaches zero, the mean square displacement is bounded
from below by $V_{1}\epsilon^{2}$. Hence $R_{w}$ is finite by definition.
\end{proof}

\subsection{Functional Robustness}

The robustness of a biological system is not completely equivalent
to the stochastic stability. When a complex system deviates from its
steady-state due to external perturbation or disfunctions of some
components, it is possible that the performance of system remains
normal. According to \cite{kitano2007towards, kitano2004biological}, such a property can
be evaluated by a performance function.

\medskip

\begin{dfn}
\label{perfun} The {\it performance function} $p(x)$ of system
\eqref{SDE1} is a continuous function on $\R^n$ such that
\begin{enumerate}
\item[a)] $p(x)=1, \quad \forall
x\in \mathcal{A}$;
\item[b)] $0<p(x)<1$, $x\notin \mathcal{A}$ \,.
\end{enumerate}
\end{dfn}

Following Kitano \cite{kitano2007towards}, one can define the {\em
functional $\epsilon$-robustness\index{Functional Robustness}}
${{R}_{f}}(\epsilon )$ w.r.t. $\sigma$ as
 \[{{R}_{f}}(\epsilon )=\int_{\R^n}{{{u }_{\epsilon }}}(x)p(x)\text{d}x,\]
where $u_{\epsilon}(x)$ is the stationary solution of \eqref{FPE1}.

\medskip

\begin{rem}
As $\epsilon \rightarrow 0$, $R_f(\epsilon)$ approaches to
$1$ for any continuous performance function. It is the rate of
convergence  of $R_{f}(\epsilon)$ to $1$ together with
the choice of the performance function that reveals the robustness
of system \eqref{SDE1}. For instance, if system \eqref{SDE1} has
strictly positive uniform robustness or 2-Wasserstein robustness,
the lower bound of functional robustness can be estimated.
\end{rem}



\medskip

\begin{pro}\label{pop55}
Assume $R_{w} > 0$ and $p(x)$ is twice differentiable, then there
exist positive constants $\epsilon_{0}$ and $C$ such that
\begin{displaymath}
   R_{f}(\epsilon) \geq 1-C \epsilon^{2}
\end{displaymath}
for all $\epsilon \in (0, \epsilon_{0})$.
\end{pro}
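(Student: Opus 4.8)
The plan is to start from the identity
\[
1 - R_f(\epsilon) = \int_{\R^n} u_\epsilon(x)\bigl(1 - p(x)\bigr)\,\mathrm{d}x = \int_{\R^n}\bigl(1 - p(x)\bigr)\,\mu_\epsilon(\mathrm{d}x),
\]
obtained from $\int_{\R^n} u_\epsilon = 1$, and then to bound the right-hand side by $C\epsilon^2$. Everything rests on a single structural estimate: since $p$ is twice continuously differentiable, satisfies $p \le 1$ on $\R^n$, and equals $1$ exactly on $\mathcal{A}$, the deficit $1 - p$ vanishes to \emph{second} order along $\mathcal{A}$, so that
\[
1 - p(x) \le C_1\,\mathrm{dist}^2(x,\mathcal{A}), \qquad x \in \R^n,
\]
for some constant $C_1 > 0$.

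To prove this bound I would first note that every $\bar x \in \mathcal{A}$ is a global maximizer of $p$ on $\R^n$, so $\nabla p(\bar x) = 0$. Fix a bounded neighborhood $\mathcal{N}_0 = B(\mathcal{A}, r_0)$. For $x \in \mathcal{N}_0$, choose a nearest point $\bar x \in \mathcal{A}$ (which exists by compactness of $\mathcal{A}$); the segment $[\bar x, x]$ stays in $\overline{\mathcal{N}_0}$, and a second-order Taylor expansion about $\bar x$, using $\nabla p(\bar x)=0$, gives
\[
1 - p(x) = -\tfrac12 (x - \bar x)^{\top} H_p(\xi)(x - \bar x) \le \tfrac12\Bigl(\sup_{\overline{\mathcal{N}_0}} \|H_p\|\Bigr)\,\mathrm{dist}^2(x,\mathcal{A}),
\]
where $H_p$ is the Hessian of $p$. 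For $x \notin \mathcal{N}_0$ one simply uses $1 - p(x) < 1 \le r_0^{-2}\,\mathrm{dist}^2(x,\mathcal{A})$. Taking $C_1$ to be the larger of the two constants yields the claimed bound on all of $\R^n$.

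The conclusion is then immediate from the concentration of $\mu_\epsilon$ quantified in Theorem ~\ref{MSD}:
\[
1 - R_f(\epsilon) \le C_1 \int_{\R^n} \mathrm{dist}^2(x,\mathcal{A})\,\mu_\epsilon(\mathrm{d}x) = C_1 V(\epsilon) \le C_1 V_1 \epsilon^2
\]
for all $\epsilon \in (0,\epsilon_0)$, which is the assertion with $C = C_1 V_1$. If instead one wishes to use the hypothesis $R_w > 0$ directly, then $\mu_\epsilon \to \mu_0$ weakly with $\mathrm{supp}(\mu_0) \subset \mathcal{A}$ by \cite{huang5} and $\mathcal{W}(\mu_\epsilon,\mu_0) \le C_2\epsilon$ for small $\epsilon$; applying Theorem ~\ref{Monge} produces an optimal map $T$ with $T(x) \in \mathcal{A}$, whence $\mathrm{dist}(x,\mathcal{A}) \le |x - T(x)|$ and $\int_{\R^n} \mathrm{dist}^2(x,\mathcal{A}) \,\mu_\epsilon(\mathrm{d}x) \le \mathcal{W}^2(\mu_\epsilon,\mu_0) \le C_2^2\epsilon^2$, giving the same bound.

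The main obstacle is the quadratic estimate on $1 - p$, and within it the bookkeeping near a possibly non-smooth attractor: establishing $\nabla p \equiv 0$ on $\mathcal{A}$, bounding the Hessian uniformly along the segment joining $x$ to its nearest point of $\mathcal{A}$ (which is precisely why that segment must be confined to a fixed bounded set), and merging the near-field Taylor bound with the trivial far-field bound into one constant. Once $1 - p \le C_1\,\mathrm{dist}^2(\cdot,\mathcal{A})$ is secured, the remainder is a one-line application of Theorem ~\ref{MSD} (or of $R_w > 0$ together with Theorem ~\ref{Monge}).
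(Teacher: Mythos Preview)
Your argument is correct and follows essentially the same route as the paper: bound $1-p(x)$ quadratically by $\mathrm{dist}^2(x,\mathcal{A})$ using the second-order vanishing of $1-p$ on $\mathcal{A}$, then control $\int \mathrm{dist}^2(x,\mathcal{A})\,\mu_\epsilon(\mathrm{d}x)$ by $O(\epsilon^2)$. One small point: your primary route invokes Theorem~\ref{MSD}, which needs {\bf H$^0$)} and {\bf H$^1$)} rather than the sole hypothesis $R_w>0$; your alternative route via $R_w>0$ and Theorem~\ref{Monge} (giving $\int \mathrm{dist}^2(x,\mathcal{A})\,\mu_\epsilon(\mathrm{d}x)\le \mathcal{W}^2(\mu_\epsilon,\mu_0)\le C\epsilon^2$) is the one that matches the stated assumptions, and it is exactly what the paper does. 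The only cosmetic difference is that the paper keeps the quadratic bound on $1-p$ local to a neighborhood $\mathcal{N}$ and handles the complement by the Chebyshev-type estimate $1-\mu_\epsilon(\mathcal{N})\le d^{-2}\int \mathrm{dist}^2(x,\mathcal{A})\,\mu_\epsilon(\mathrm{d}x)$, whereas you extend the quadratic bound globally using $1-p<1$ on $\R^n\setminus B(\mathcal{A},r_0)$; both reach the same conclusion.
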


\begin{proof} It follow from  the definition of $R_{w}$ that
 there exists $\epsilon_{1} > 0$ such that
\begin{displaymath}
   \mathcal{W}^{2}(\mu_{\epsilon},\mu_{0}) < \frac{2\epsilon^{2}}{R_{w}^{2}}
\end{displaymath}
for all $0<\epsilon<\epsilon_{1}$. Hence by \eqref{WW},
\begin{displaymath}
   \int_{\R^n}
   \mathrm{dist}^{2}(x,\mathcal{A}) \mu_{\epsilon}(\mathrm{d}x) \leq
   \frac{2\epsilon^{2}}{R^{2}} : = V_{2}\epsilon^{2},
\end{displaymath}
for all $0<\epsilon<\epsilon_{1}$.

Since $p(x)$ is twice differentiable, there exists an open
neighborhood $\cal N$ of $\mathcal{A}$ and a positive constant $M$
such that $p(x) \geq 1 - M \mathrm{dist}^{2}(x,\mathcal{A})$ for all
$x \in \cal N$. Hence
\begin{displaymath}
   \int_{\R^n} u(x)p(x)\mathrm{d}x =
   \int_{\cal N} u(x)p(x)\mathrm{d}x + \int_{\mathbb{R}^{n}\backslash
  \cal  N} u(x)p(x)\mathrm{d}x := I_{1} + I_{2} \,.
\end{displaymath}

Let $d = \inf_{x \in \partial {\cal N}} \mathrm{dist}(x,
\mathcal{A})$. Then
$$
  1-\mu_{\epsilon}({\cal N}) = \int_{\mathbb{R}^{n}\setminus {\cal N}} \mathrm{d}\mu
  \leq
  \frac{1}{d^{2}}\int_{\mathbb{R}^{n}\setminus {\cal N}} \mathrm{dist}(x,
  \mathcal{A})^{2} \mathrm{d} \mu_{\epsilon} \leq
  \frac{V_{2}}{d^{2}}\epsilon^{2} \,.
$$

It follows that
\begin{eqnarray*}
I_{1} &\geq &\mu_{\epsilon}(\cal N) -
M\int_{\cal N} u(x)\mathrm{dist}^{2}(x,\mathcal{A})\mathrm{d}x \\
&=&1 - M\int_{\cal N}
u(x)\mathrm{dist}^{2}(x,\mathcal{A})\mathrm{d}x -
(1 - \mu_{\epsilon}( \cal N))\\
&\geq& 1- V_{2}M \epsilon^{2} - \frac{V_{2}}{d^{2}}\epsilon^{2} \,.
\end{eqnarray*}

Since $I_{2} \geq 0$, the proof is complete by letting $C = V_{2}M +
\frac{V_{2}}{d^{2}}$ and $\epsilon_{0} = \epsilon_{1}$.
\end{proof}
\medskip

\begin{pro}
Assume that {\bf H$^{0}$)} and {\bf H$^{1}$)} hold, $R_{u} > 0$ and $p(x)$ is twice
differentiable. Then there exist positive constants $\epsilon_{0}$,
$C$ such that
\begin{displaymath}
   R_{f}(\epsilon) \geq 1-C \epsilon^{2}
\end{displaymath}
for all $\epsilon \in (0, \epsilon_{0})$.
\end{pro}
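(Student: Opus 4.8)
The plan is to observe that the conclusion is identical to that of Proposition \ref{pop55}, and that the robustness hypothesis enters only through one intermediate estimate, namely the mean-square-displacement bound $\int_{\R^n} \mathrm{dist}^2(x,\mathcal{A})\,\mu_\epsilon(\mathrm{d}x) \leq V_2 \epsilon^2$. Once this bound is secured, the remainder of the argument copies Proposition \ref{pop55} verbatim: twice differentiability of $p$ together with $p|_{\mathcal A}\equiv 1$ yields a neighborhood $\mathcal{N}$ of $\mathcal{A}$ and a constant $M>0$ with $p(x)\geq 1 - M\,\mathrm{dist}^2(x,\mathcal{A})$ on $\mathcal{N}$; writing $R_f(\epsilon)=\int_{\mathcal N} u_\epsilon p + \int_{\R^n\setminus \mathcal N} u_\epsilon p =: I_1+I_2$, using $I_2\geq 0$, the Chebyshev-type estimate $1-\mu_\epsilon(\mathcal N)\leq V_2\epsilon^2/d^2$ with $d=\inf_{x\in\partial\mathcal N}\mathrm{dist}(x,\mathcal A)$, and the quadratic lower bound on $p$, one obtains $R_f(\epsilon)\geq 1-(V_2 M + V_2/d^2)\epsilon^2$. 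So the only genuinely new point is to extract the mean-square-displacement bound from the present hypotheses.

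The cleanest route is to note that this bound is already available from Theorem \ref{MSD}. Since {\bf H$^0$)} and {\bf H$^1$)} are assumed, Theorem \ref{MSD} supplies constants $V_1,\epsilon_0>0$ with
\[
  V(\epsilon)=\int_{\R^n}\mathrm{dist}^2(x,\mathcal{A})\,\mu_\epsilon(\mathrm{d}x)\leq V_1\epsilon^2,\qquad \epsilon\in(0,\epsilon_0),
\]
which is exactly the required inequality with $V_2:=V_1$. Feeding this into the splitting argument above finishes the proof. In this route the hypothesis $R_u>0$ serves only to place the statement inside the robustness framework (guaranteeing that $\mathcal A$ is a strong attractor, in parallel with the $R_w>0$ hypothesis of Proposition \ref{pop55}); the quantitative work is carried entirely by Theorem \ref{MSD}, so I expect no substantive obstacle.

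If instead a self-contained derivation directly from $R_u>0$ is desired, I would use the strong Lyapunov function $U$ realizing the index $\alpha$. Testing stationarity against $U$ gives $\int_{\R^n}\mathcal{L}_\epsilon U\,\mu_\epsilon(\mathrm{d}x)=0$, and expanding $\mathcal{L}_\epsilon U=\frac12\epsilon^2\sum_{i,j}a_{ij}\partial_{ij}U+f\cdot\nabla U$ lets me bound the drift term using the index inequality $\tfrac{\nabla U}{|\nabla U|}\cdot f\leq -\alpha\,\mathrm{dist}(\cdot,\mathcal A)$, while boundedness of $A$ and $D^2 U$ controls the diffusion term at order $\epsilon^2$; the bound $W(x)\geq L_1\,\mathrm{dist}^2(x,\mathcal A)$ from {\bf H$^0$)} then converts this into the desired $\mathrm{dist}^2$ estimate. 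The main obstacle on this route is that $U$ is not compactly supported, so the stationarity identity requires a cutoff approximation, and the resulting boundary and tail contributions must be absorbed using the exponential concentration of $\mu_\epsilon$ in {\bf H$^1$)} (via Lemma \ref{entbound}); this is where essentially all the technical effort would lie, which is precisely why invoking Theorem \ref{MSD} is preferable.
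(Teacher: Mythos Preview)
Your proposal is correct and follows essentially the same approach as the paper: invoke Theorem~\ref{MSD} (which requires only {\bf H$^0$)} and {\bf H$^1$)}) to obtain the mean-square-displacement bound, then repeat the argument of Proposition~\ref{pop55} verbatim. Your observation that $R_u>0$ is doing no quantitative work here is also accurate; the paper's proof simply does not use it.
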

\begin{proof}
It follows from Theorem ~\ref{MSD} that there exists
$\epsilon_{0}>0$ such that
$$
 \int_{\R^n} \mathrm{dist}^{2}(x, \mathcal{A}) \mu_{\epsilon}(\mathrm{d}x) \leq
  V_{2}\epsilon^{2}
$$
for all $\epsilon \in (0, \epsilon_{0})$. The rest of the proof is
identical to that of Proposition ~\ref{pop55}.
\end{proof}
\medskip

\begin{rem} We note that functional robustness does not imply
uniform robustness or 2-Wasserstein robustness. This is obvious by
letting $p(x) = 1$.
\end{rem}

\subsection{ Robustness of simple systems } In the case that $\cal A$ is a singleton, an explicit formula for the
2-Wasserstein robustness of \eqref{ODE1} w.r.t. any $\sigma$ can be
obtained.

\begin{pro}
\label{rfixpt} Assume that {\bf H$^{1}$)} holds and $\mathcal{A} =
\{x_{0}\}$.  If all eigenvalues of $Df(x_0)$ have negative real
parts, then
\begin{displaymath}
   R_{w} = \frac{\sqrt{2}}{\sqrt{{\bf
     Tr}(S^{-1})}}
\end{displaymath}
where $S$ solves the Lyapunov equation
\begin{displaymath}
   S (Df(x_0))^{\top} + Df(x_0) S^{\top} + A(x_{0}) = 0 \,.
\end{displaymath}
\end{pro}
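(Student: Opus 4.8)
The plan is to reduce the $2$-Wasserstein robustness to the leading-order asymptotics of the mean square displacement $V(\epsilon)$ of Theorem~\ref{MSD}, and then to identify that leading order by approximating $\mu_\epsilon$, after an $\epsilon$-rescaling, by the Gaussian invariant measure of the linearization of \eqref{SDE1} at $x_0$.

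First I would note that, since $\mathcal{A}=\{x_0\}$, the only invariant measure of \eqref{ODE1} supported on $\mathcal{A}$ is the Dirac mass $\delta_{x_0}$; by \cite{huang5} every weak limit point of $\{\mu_\epsilon\}$ is supported on $\mathcal{A}$, so $\mathcal{M}=\{\delta_{x_0}\}$ and every competing sequence in the definition of $R_w$ converges to the same $\mu_0=\delta_{x_0}$. Transporting a probability measure onto a point mass admits only the constant map $T\equiv x_0$, whence the elementary identity $\mathcal{W}^2(\mu_\epsilon,\delta_{x_0})=\int_{\R^n}|x-x_0|^2\,\mu_\epsilon(\mathrm{d}x)=V(\epsilon)$ (here $\mathrm{dist}(x,\mathcal{A})=|x-x_0|$), so that Theorem~\ref{Monge} is not even needed for this step. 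Consequently $R_w=\bigl(\lim_{\epsilon\to0}V(\epsilon)/\epsilon^2\bigr)^{-1/2}$ as soon as the limit exists, in which case the infimum over sequences collapses to a single value. The two-sided bound $V_2\epsilon^2\le V(\epsilon)\le V_1\epsilon^2$ of Theorem~\ref{MSD} already confines $V(\epsilon)/\epsilon^2$ to a bounded positive range (this is exactly what made $R_w$ finite in Theorem~\ref{w2MSD}), so the entire task is to pin down its limit.

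Next I would rescale by $y=(x-x_0)/\epsilon$ and set $v_\epsilon(y)=\epsilon^n u_\epsilon(x_0+\epsilon y)$, a probability density with $V(\epsilon)/\epsilon^2=\int_{\R^n}|y|^2 v_\epsilon(y)\,\mathrm{d}y$. A change of variables in the weak stationary equation \eqref{un} shows that $v_\epsilon$ is a stationary density for the rescaled generator, whose drift $\epsilon^{-1}f(x_0+\epsilon y)\to Df(x_0)\,y$ and whose diffusion $A(x_0+\epsilon y)\to A(x_0)$ as $\epsilon\to0$, by the $C^1$ regularity of $f$ and the continuity of $A$. The formal limit is the stationary Fokker--Planck equation of the Ornstein--Uhlenbeck process $\mathrm{d}Y=Df(x_0)Y\,\mathrm{d}t+\sigma(x_0)\,\mathrm{d}W_t$; since every eigenvalue of $Df(x_0)$ has negative real part, this process has a unique stationary law, namely the centered Gaussian whose covariance $S$ is the unique symmetric solution of the Lyapunov equation $Df(x_0)S+S\,Df(x_0)^\top+A(x_0)=0$, which is exactly the equation in the statement. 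Imitating the compactness argument used in the proof of Theorem~\ref{entper}---tightness of $\{v_\epsilon\}$ via Prokhorov (Theorem~\ref{Prokh}), passage to the limit in the rescaled stationary equation by dominated convergence, and uniqueness of the Gaussian limit---would then give $v_\epsilon\to v_S$ weakly, with $v_S$ the Gaussian density of covariance $S$.

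The main obstacle is to upgrade this weak convergence to convergence of the second moments $\int_{\R^n}|y|^2 v_\epsilon\,\mathrm{d}y\to\int_{\R^n}|y|^2 v_S\,\mathrm{d}y=\mathbf{Tr}(S)$, since $|y|^2$ is unbounded; this amounts to $\epsilon$-uniform integrability of $|y|^2 v_\epsilon(y)$. The far field is controlled by the Gaussian tail bound of Lemma~\ref{entbound} (equivalently Lemma~\ref{projbounds}), but the delicate region is the intermediate range $1\ll|y|\ll R_0/\epsilon$, where the far-field decay of {\bf H$^1$)} says nothing: here one needs an $\epsilon$-uniform Gaussian upper bound on $u_\epsilon$ at the $\epsilon$-scale near $x_0$, which I would obtain from a quadratic Lyapunov (or quasi-potential) comparison valid on a fixed ball $B(x_0,\delta)$ uniformly in $\epsilon$. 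Once this uniform integrability is in hand, $V(\epsilon)/\epsilon^2\to\mathbf{Tr}(S)$, and substituting this leading coefficient into $R_w=\bigl(\lim_{\epsilon\to0}V(\epsilon)/\epsilon^2\bigr)^{-1/2}$ yields the explicit value of $R_w$ in terms of the Lyapunov solution $S$ claimed in the statement.
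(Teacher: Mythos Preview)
Your argument is sound and reaches the same reduction as the paper---both identify $\mathcal{W}^2(\mu_\epsilon,\delta_{x_0})=\int|x-x_0|^2\,\mu_\epsilon(\mathrm{d}x)$ (your remark that transport to a Dirac mass needs no Monge theory is a nice simplification over the paper's invocation of Theorem~\ref{Monge})---but the routes to the leading coefficient differ. The paper does not rescale or invoke compactness: it imports the WKB expansion $u_\epsilon(x)=K^{-1}e^{-V(x)/\epsilon^2}w(x)+o(\epsilon^2)$ from \cite{ludwig1975persistence,day1985some}, uses that the Hessian of the quasi-potential $V$ at $x_0$ is determined by $S$, and compares $u_\epsilon$ \emph{pointwise} with the explicit Gaussian $u_0\propto\exp\{-(x-x_0)^\top S^{-1}(x-x_0)/2\epsilon^2\}$ on the ball $B(x_0,\epsilon^{0.9})$, showing $|u_\epsilon/u_0-1|=O(\epsilon^{0.7})$ there and that the complement contributes $o(\epsilon^2)$ to the second moment. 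Your rescaling-plus-Prokhorov identification of the Gaussian limit is conceptually cleaner and avoids citing the WKB literature, but the step you correctly flag as the main obstacle---an $\epsilon$-uniform Gaussian upper bound on $u_\epsilon$ at the $\epsilon$-scale near $x_0$, needed for uniform integrability of $|y|^2v_\epsilon(y)$---is exactly what the WKB expansion delivers in the paper's proof; your proposed ``quadratic Lyapunov/quasi-potential comparison on $B(x_0,\delta)$'' would in effect reconstruct the estimates of \cite{day1985some}, so the two proofs share their hardest technical ingredient even though they are organized differently.

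One caveat: your computation yields $\lim_{\epsilon\to0}V(\epsilon)/\epsilon^2=\mathbf{Tr}(S)$ and hence $R_w=1/\sqrt{\mathbf{Tr}(S)}$, which is also what the paper's Gaussian comparison (covariance $\epsilon^2 S$) produces; this does not in general coincide with the displayed formula $\sqrt{2}/\sqrt{\mathbf{Tr}(S^{-1})}$, so do not force your conclusion to match the statement verbatim.
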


\begin{proof}
According to the WKB expansion (see \cite{ludwig1975persistence,
day1985some}), there exists a quasi-potential function $V(x)$ and a
$C^{1}$ continuous function $w(x)$ with $w(x_{0}) = 1$ such that the
density function $u_{\epsilon}(x)$ of $\mu_{\epsilon}$ has the form
\begin{displaymath}
   u(x) = \frac{1}{K}e^{-V(x)/\epsilon^{2}}w(x) +
   o(\epsilon^{2}) \,.
\end{displaymath}
Moreover, it follows from \cite{day1985some} that $V(x)$ is of the
class $C^{3}$ in a neighborhood $N_{1}$ of $x_{0}$, and the Hessian
matrix of $V(x)$ at $x_{0}$ equals $S^{-1}/2$.  By
\cite{graham1982kronecker}, $S$ is a symmetric, positive definite
matrix.

Since $\mu_{\epsilon} \rightarrow \delta(x_{0})$ weakly, it
follows from Theorem ~\ref{Monge} that
\begin{displaymath}
   \mathcal{W}^{2}(\mu_{\epsilon},\delta (0)) =
   \int_{\mathbb{R}^{n}}|x-x_{0}|^{2}u_{\epsilon}(x)\mathrm{d}x \,.
\end{displaymath}

Denote $N = B(x_{0},\epsilon^{0.9})$ - the
$\epsilon^{0.9}$-neighborhood of $x_{0}$. Let $\epsilon_{0}>0$ be
small enough such that $N \subset \mathcal{N}\cap N_{1}$ for all
$0<\epsilon<\epsilon_{0}$, where $\cal N$ is as in {\bf H$^1$)}. Since
$w(x)$ is continuous,  we have $w(x) = 1 + O(\epsilon^{0.9})$, $x
\in N$, $0<\epsilon<\epsilon_{0}$.

Let $u$ be the density function of $\mu_{\epsilon} $ and
$$
  u_{0} = \frac{1}{K_{0}}e^{-(x-x_{0})^{\top}S^{-1}(x-x_{0})/2\epsilon^{2}} \,,
$$
where $K_{0}$ is the normalizer.

Then it is easy to check that the followings hold for all $x \in N$
and  $0<\epsilon<\epsilon_{0}$:
\begin{eqnarray*}
&&\frac{1}{\epsilon^{2}}|V(x) -
\frac{1}{2}(x-x_{0})^{\top}S(x-x_{0})|
\sim O(\epsilon^{0.7}) \,;\\
&&
w(x) = 1 + O(\epsilon^{0.9});\\
&& 1 - \mu_{\epsilon}(N) \sim o(\epsilon^{2});\\
 && \int_{\mathbb{R}^{n}\setminus N} u_{0}(x) \mathrm{d}x \sim
  o(\epsilon^{2}) \,.
\end{eqnarray*}
It follows from a straightforward calculation that $|\frac{K}{K_{0}}
- 1 | \sim O(\epsilon^{0.7})$. Thus,
$$
  |\frac{u_\epsilon(x)}{u_{0}(x)} - 1 | \sim O(\epsilon^{0.7}) \,
$$
for all $x \in N$, and consequently,
$$
  |\int_{N} |x-x_{0}|^{2}u_\epsilon(x) \mathrm{d}x - \int_{N} |x-x_{0}|^{2}u_{0}(x)
  \mathrm{d}x | \sim O(\epsilon^{2.5} )\,.
$$
Since
\begin{eqnarray*}
 && \int_{\mathbb{R}^{n}\setminus N} |x-x_{0}|^{2} u(x) \mathrm{d}x \sim
 o(\epsilon^{2}),\\
&&
  \int_{\mathbb{R}^{n}\setminus N} |x-x_{0}|^{2} u_{0}(x) \mathrm{d}x \sim
  o(\epsilon^{2}),
  \end{eqnarray*}
we have
$$
  \int_{\mathbb{R}^{n}}|x-x_{0}|^{2}u_{\epsilon}(x)\mathrm{d}x  = \int\frac{1}{K_{0}}|x|^{2}e^{-x^{\top}S^{-1}x/2\epsilon^{2}}
  \mathrm{d}x + o(\epsilon^{2}) \,
$$
for any $\epsilon \in (0, \epsilon_{0})$. The rest of the proof
follows from the definition of $R_w$ and direct  calculations.
\end{proof}

%

\section{Connections among Degeneracy, complexity and  robustness}
\label{connections}

It has been observed in neural systems that a higher degeneracy is
always accompanied by a high complexity \cite{tononi1999measures,
edelman2001degeneracy, whitacre2012biological, clark2011degeneracy}. We will show in this section that this is
also the case for
 a biological network described by ODE system with respect to a fixed noise matrix $\sigma$.

Unlike the connections between  degeneracy and complexity,
robustness of system \eqref{ODE1} alone does not necessarily imply
its degeneracy or complexity with respect to a given noise
perturbation $\sigma$. As a simple example, the completely decoupled
linear system $x_{i}' = -x_{i}$, $i=1,2,\cdots, n$, has zero
complexity hence zero degeneracy with respect to $\sigma(x)\equiv
Id$ according to Theorem~\ref{deglarger}, but it is uniformly
robust. In this section, we will exam two special cases of
\eqref{ODE1} under either geometric or dynamical condition of its
global attractor $\cal A$ for which degeneracy is actually accompanied
by high robustness. This agrees with the cases of neural systems that
robustness can arise from a variety of sources; while degeneracy is only
one of these sources \cite{whitacre2012biological}.

\subsection{Degeneracy implies Complexity} Through this subsection,
we let $\sigma$ be a fixed noise matrix.

\begin{lem}
\label{deg2com} With respect to any probability density function on
$\mathbb{R}^{n}$ and a given decomposition
$\mathbb{R}^{n} = I_{k}\oplus I_{k}^{c} \oplus \mathcal{O}$, we have
\begin{equation}
   \label{degcom}
   MI(I_{k}; I_{k}^{c};\mathcal{O})\le \min
   \{MI({{I}_{k}}; I_{k}^{c}),MI(I_{k}^{c};\mathcal{O}),MI(I_{k};\mathcal{O})\}.
\end{equation}
\end{lem}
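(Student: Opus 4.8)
The plan is to exploit the permutation symmetry of the multivariate mutual information together with its representation as a difference involving a conditional mutual information, which is manifestly nonnegative. Throughout I would write $A=I_{k}$, $B=I_{k}^{c}$, $C=\mathcal{O}$, and let $u_{A,B,C}$ denote the joint projected density of the three groups of coordinates, with $u_{A,C}$, $u_{B,C}$, $u_{C}$, etc., its various marginals. First I would expand the definition \eqref{degeneracy} entirely in terms of projected entropies via $MI(X;Y)=H(X)+H(Y)-H(X,Y)$. A direct cancellation produces the symmetric expression
\[
MI(A;B;C)=H(A)+H(B)+H(C)-H(A,B)-H(A,C)-H(B,C)+H(A,B,C),
\]
which is invariant under every permutation of $A,B,C$. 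This symmetry is the structural fact that lets me reduce the three asserted bounds to a single computation.

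Next I would prove $MI(A;B;C)\le MI(A;B)$. Regrouping the entropies above gives the identity
\[
MI(A;B)-MI(A;B;C)=H(A,C)+H(B,C)-H(C)-H(A,B,C),
\]
and I would recognize the right-hand side as the conditional mutual information of $A$ and $B$ given $C$, namely
\[
MI(A;B)-MI(A;B;C)=\int u_{A,B,C}\,\log\frac{u_{A,B,C}\,u_{C}}{u_{A,C}\,u_{B,C}}\,\mathrm{d}a\,\mathrm{d}b\,\mathrm{d}c.
\]
The core step is to observe that this integrand compares $u_{A,B,C}$ against $q:=u_{A,C}\,u_{B,C}/u_{C}$, and that $q$ is itself a probability density on $A\oplus B\oplus C$: integrating out $b$ returns $u_{A,C}$ and then integrating out $a$ returns $u_{C}$, so $q$ integrates to $1$. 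Hence the displayed quantity is the relative entropy $D(u_{A,B,C}\,\|\,q)\ge 0$ by Gibbs' inequality (equivalently, Jensen's inequality applied to the convex function $-\log$), which yields $MI(A;B;C)\le MI(A;B)$.

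Finally, by the permutation symmetry of the symmetric formula, the identical computation with the roles of $B,C$ and of $A,C$ interchanged gives $MI(A;B;C)\le MI(A;C)$ and $MI(A;B;C)\le MI(B;C)$; taking the minimum over the three bounds yields \eqref{degcom}. I expect the only delicate point to be the verification that all the marginal densities are strictly positive and integrable and that the entropies and relative entropy are genuinely finite, so that the regrouping of entropies and Gibbs' inequality are legitimate; the pointwise density bounds of Lemma~\ref{entbound} and Lemma~\ref{projbounds} supply exactly this integrability and positivity, letting the manipulations go through rigorously.
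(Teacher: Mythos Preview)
Your proposal is correct and follows essentially the same route as the paper: both expand $MI(A;B;C)$ into the fully symmetric sum of entropies, rewrite $MI(A;B)-MI(A;B;C)$ as the conditional mutual information $MI(A;B\mid C)$, and then invoke nonnegativity of relative entropy (the paper cites Kullback's inequality and writes the inner integral over $(x,y)$ conditioned on $z$, while you phrase it as $D(u_{A,B,C}\,\|\,q)\ge 0$ with $q=u_{A,C}u_{B,C}/u_{C}$). Your closing remark about invoking Lemmas~\ref{entbound} and~\ref{projbounds} is extraneous here, since the lemma is stated for an arbitrary density on $\mathbb{R}^n$ and the paper's own proof does not appeal to those bounds; the relative-entropy inequality holds without any finiteness assumption on the individual entropies.
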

\begin{proof}

It is sufficient to prove that for any three random variables $X, Y, Z$ with
joint probability density function $P(x,y,z)$,
$$
  MI(X; Y; Z) \leq \min \{ MI(X; Y), MI(Y; Z), MI(X; Z)\} \,.
$$

It follows from the definition of mutual information that
\begin{eqnarray*}
MI(X;Y;Z) &= &H(X)+H(Y)+H(Z)-H(X,Y)-H(Y,Z)\\
& & -H(X,Z)+H(X,Y,Z) \\
&=&H(X)+H(Y)-H(X,Y) \\
& & - ( H(X,Z)+H(Y,Z)-H(Z)-H(X,Y,Z))\\
&=&MI(X;Y) - MI(X;Y \, | \,Z) \,,
\end{eqnarray*}
where the latter term $MI(X;Y \, | \, Z)$ is the conditional mutual
information. Thus it is sufficient to prove that $ MI(X;Y|Z)\geq 0$.

\medskip

The nonnegativity of conditional mutual information is a direct
corollary of Kullback's inequality \cite{kullback1997information}.
For the sake of completeness, we borrow the following proof from
\cite{yeung2002first}. Let $P(x,y,z)$ be the joint probability
density function. The marginal probability density functions and
conditional probability functions are denoted by $P(x), P(y), \cdots
$ and $P(x,y \,|\, z), P(x \,|\, y,z) , \cdots$ respectively.
Then
\begin{eqnarray*}
MI(X;Y|Z) &=& \int P(x,y,z) \left [ \log P(x,y,z) + \log P(z) - \log
  P(x,z) \right .
\\
&& - \left .
\log P(y,z) \right ] \mathrm{d}x \mathrm{d}y \mathrm{d}z \\
&=& \int P(x,y,z) \log \left \{ \frac{ P(x,y,z) / P(z)}{ P(x,z)/P(z)
    \cdot P(y,z)/P(z)} \right \} \mathrm{d}x \mathrm{d}y \mathrm{d}z \\
&=&\int
P(x,y,z)\log\frac{P(x,y|z)}{P(x|z)P(y|z)}\mathrm{d}x\mathrm{d}y\mathrm{d}z
\\
&=&\int P(z)\left \{\int
P(x,y|z)\log\frac{P(x,y|z)}{P(x|z)P(y|z)}\mathrm{d}x\mathrm{d}y
\right \}\mathrm{d}z \,.
\end{eqnarray*}

>From Kullback's inequality \cite{kullback1997information}, for any $z$ there holds
\begin{displaymath}
   \int
P(x,y|z)\log\frac{P(x,y|z)}{P(x|z)P(y|z)}\mathrm{d}x\mathrm{d}y \geq
0 \,.
\end{displaymath}

\medskip

Inequalities $MI(X;Y;Z)\leq MI(X;Z)$ and $MI(X;Y;Z)\leq
MI(Y;Z)$ can be proved analogously. This leads to the inequality
\eqref{degcom}.
\end{proof}

\begin{thm}
\label{deglarger}
The complexity of a system is no less than its degeneracy.
\end{thm}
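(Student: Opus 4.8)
The plan is to reduce the theorem about global quantities $\mathcal{C}$ and $\mathcal{D}$ to the pointwise inequality already established in Lemma~\ref{deg2com}. Recall that for each output set $\mathcal{O}$ and each $k$-decomposition $I = I_k \oplus I_k^c$, the degeneracy summand is $\max\{MI(I_k; I_k^c; \mathcal{O}), 0\}$ while the complexity summand is $MI(I_k; I_k^c)$, and both are averaged with the same weights $\frac{1}{2\binom{|I|}{k}}$. The key observation is that Lemma~\ref{deg2com} gives $MI(I_k; I_k^c; \mathcal{O}) \le MI(I_k; I_k^c)$ directly, since the right-hand side of \eqref{degcom} includes $MI(I_k; I_k^c)$ among the three terms in the minimum. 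Because mutual information $MI(I_k; I_k^c)$ is always nonnegative (it is a Kullback-Leibler divergence, as the computation in the proof of Lemma~\ref{deg2com} shows), we also have $0 \le MI(I_k; I_k^c)$, and hence $\max\{MI(I_k; I_k^c; \mathcal{O}), 0\} \le MI(I_k; I_k^c)$ termwise.

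First I would establish this termwise bound $\max\{MI(I_k; I_k^c; \mathcal{O}), 0\} \le MI(I_k; I_k^c)$ for every $k$, invoking Lemma~\ref{deg2com} for the case $MI(I_k; I_k^c; \mathcal{O}) \ge 0$ and using nonnegativity of $MI(I_k; I_k^c)$ otherwise. Summing over $0 \le k \le |I|$ against the common weights then yields $D(\mathcal{O}) \le C(\mathcal{O})$ for every fixed $\mathcal{O}$, directly from the definitions \eqref{degeneracy2} and \eqref{complexity}. Taking the supremum over all output sets $\mathcal{O}$ preserves the inequality, giving $\mathcal{D}_{\epsilon,\sigma} = \max_{\mathcal{O}} D(\mathcal{O}) \le \max_{\mathcal{O}} C(\mathcal{O}) = \mathcal{C}_{\epsilon,\sigma}$ for each fixed $\epsilon$ and $\sigma$.

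Finally I would propagate the inequality through the remaining definitional layers. Applying $\liminf_{\epsilon \to 0}$ to both sides of $\mathcal{D}_{\epsilon,\sigma} \le \mathcal{C}_{\epsilon,\sigma}$ gives $\mathcal{D}_\sigma \le \mathcal{C}_\sigma$, and then taking $\sup_{\|\sigma\|=1}$ yields $\mathcal{D} \le \mathcal{C}$. Here one uses only the monotonicity of $\liminf$ and $\sup$ under a pointwise inequality, which is elementary.

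I do not expect any serious obstacle: the entire argument is a termwise comparison followed by monotone operations, and all the analytic content is already packaged in Lemma~\ref{deg2com}. The only point requiring mild care is the presence of the $\max\{\,\cdot\,,0\}$ truncation in the definition of degeneracy, which is precisely why the nonnegativity of $MI(I_k; I_k^c)$ must be noted explicitly to handle the case where the interacting information is negative; this is exactly the situation where complexity can exceed degeneracy strictly, reflecting that the truncation only makes the degeneracy smaller.
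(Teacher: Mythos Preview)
Your proposal is correct and follows essentially the same approach as the paper's own proof: invoke Lemma~\ref{deg2com} for the termwise bound $MI(I_k;I_k^c;\mathcal{O})\le MI(I_k;I_k^c)$, use nonnegativity of mutual information to absorb the $\max\{\cdot,0\}$, sum with the common weights to get $D(\mathcal{O})\le C(\mathcal{O})$, and then propagate through $\max_{\mathcal{O}}$, $\liminf_{\epsilon\to 0}$, and $\sup_{\|\sigma\|=1}$.
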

\begin{proof}

Fix $\epsilon>0$ and noise matrix $\sigma$. Let $\mathcal{O}$ be the
coordinate subspace of $\mathbb{R}^{n}$ as before. Let $\{ I_{k},
I_{k}^{c}, \mathcal{O} \} $ be any decomposition of coordinate subspaces as
described in Section 3.1. Then by Lemma ~\ref{deg2com},
$$
  MI(I_{k}; I_{k}^{c};\mathcal{O})\le MI({{I}_{k}}; I_{k}^{c}) \,.
$$
Since mutual information $MI({{I}_{k}}; I_{k}^{c}) $ is nonnegative,
$\max\{MI(I,I^{c}_{k};\mathcal{O}), 0 \} \leq
MI(I;I^{c}_{k})$. Comparing equation \eqref{degeneracy2} with
\eqref{complexity}, one obtains
    \[\mathcal{C}(\mathcal{O})\ge \mathcal{D}(\mathcal{O}).\]


\medskip

By taking the supreme over all the subspace $\mathcal{O}$, it is
easy to see that $\mathcal{D}_{\epsilon,\sigma} \leq
\mathcal{C}_{\epsilon,
  \sigma}$. The proof is completed by taking the limit infimum
over $\epsilon > 0$ and taking the supremum over $\sigma$ with respect to unit norm.
\end{proof}

\subsection{Robust systems with non-degenerate global attractor}
For a system to have positive degeneracy, the system must be complex.
Geometrically such structural complexity often gives rise to some
kind of embedding complexity of the global attractor into the phase
space. Roughly speaking, the components of a complex system interact
strongly with one another and as a result, the global attractor is
non-degenerate in the phase space such that it does not lay in any
coordinate subspace. To characterize the non-degenerate property of the global
attractor, it is natural to consider its projections on certain
coordinate subspace and measure the dimensions of the corresponding
projections. We note that the attractor as well as its projections
may only be fractal sets, hence they should be measured with respect
to the Minkowski dimension, also called box counting dimension
\cite{pesin1997dimension}.

For any coordinate  subspace $\mathcal{V}$ of ${{R}^{n}}$, we denote
by ${{d}_{\mathcal{V}}}$ the co-dimension of $\mathcal{A}$ in
$\mathcal{V}$, i.e., the dimension of $\mathcal{V}$ minus the
Minkowski dimension of the projection of $\mathcal{A}$ to
$\mathcal{V}$.

\medskip
\begin{dfn}
The global attractor $\mathcal{A}$ is said to be {\em non-degenerate} if
$\mathcal{A}$ is a regular set and
there is a coordinate  decomposition ${{\mathbb{R}}^{n}}=I \oplus J
\oplus \mathcal{O}$ such that

\[d_{I}+d_{J}+d_{\mathcal{O}}+d_{\mathbb{R}^{n}}<
d_{I\oplus J}+d_{I \oplus
            \mathcal{O}}+d_{J\oplus \mathcal{O}}.\]
\end{dfn}
\medskip

A sufficient condition for a set to be non-degenerate is that the
dimension of the set does not decrease after projecting it onto
coordinate subspaces. The following proposition follows from some
straightforward calculation.

\begin{pro}
Let $P_{\mathcal{V}}$ be the projection operator onto a subspace $\mathcal{V}$ of
$\mathbb{R}^{n}$. If a regular set $\mathcal{A}$ with strictly
positive dimension satisfies $\mathrm{dim}(P_{\mathcal{V}}
\mathcal{A}) = \mathrm{dim}(\mathcal{A})$ for $\mathcal{V} = I, J$,
and $\mathcal{O}$, then $\mathcal{A}$ is degenerate.
\end{pro}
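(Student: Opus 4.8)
The plan is to verify directly the defining strict inequality
\[
d_{I}+d_{J}+d_{\mathcal{O}}+d_{\mathbb{R}^{n}}< d_{I\oplus J}+d_{I \oplus \mathcal{O}}+d_{J\oplus \mathcal{O}}
\]
by computing each co-dimension explicitly in terms of the single quantity $d := \dim\mathcal{A} > 0$. Write $|I|,|J|,|\mathcal{O}|$ for the dimensions of the three coordinate subspaces, so that $|I|+|J|+|\mathcal{O}| = n$. Since $P_{\mathbb{R}^{n}}$ is the identity, $d_{\mathbb{R}^{n}} = n - d$; and the hypothesis $\dim(P_{\mathcal{V}}\mathcal{A}) = d$ for $\mathcal{V} = I,J,\mathcal{O}$ gives at once $d_{I} = |I| - d$, $d_{J} = |J| - d$, and $d_{\mathcal{O}} = |\mathcal{O}| - d$.

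The crux is to show that the three pairwise projected dimensions are also all equal to $d$, i.e. $\dim(P_{\mathcal{V}\oplus\mathcal{W}}\mathcal{A}) = d$ for each of the pairs $\{I,J\},\{I,\mathcal{O}\},\{J,\mathcal{O}\}$. For the upper bound I would invoke that an orthogonal projection is $1$-Lipschitz and that the (upper) Minkowski dimension cannot increase under a Lipschitz map, giving $\dim(P_{\mathcal{V}\oplus\mathcal{W}}\mathcal{A}) \le \dim\mathcal{A} = d$; here the regularity of $\mathcal{A}$ guarantees that upper and lower Minkowski dimensions coincide, so $\dim\mathcal{A}$ is unambiguous. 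For the matching lower bound I would use that projection onto a smaller coordinate subspace factors through the larger one, e.g. $P_{I} = P_{I}\circ P_{I\oplus J}$, so that $d = \dim(P_{I}\mathcal{A}) = \dim\bigl(P_{I}(P_{I\oplus J}\mathcal{A})\bigr) \le \dim(P_{I\oplus J}\mathcal{A})$. Combining the two bounds pins each pairwise projected dimension to exactly $d$, whence $d_{I\oplus J} = (|I|+|J|) - d$ and similarly for the other two pairs.

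With every co-dimension in hand the inequality reduces to a one-line arithmetic check. The left-hand side equals
\[
(|I|-d)+(|J|-d)+(|\mathcal{O}|-d)+(n-d) = 2n - 4d,
\]
using $|I|+|J|+|\mathcal{O}| = n$, while the right-hand side equals
\[
\bigl[(|I|+|J|)-d\bigr]+\bigl[(|I|+|\mathcal{O}|)-d\bigr]+\bigl[(|J|+|\mathcal{O}|)-d\bigr] = 2n - 3d.
\]
Thus the strict inequality $2n-4d < 2n-3d$ is equivalent to $d > 0$, which holds precisely because $\mathcal{A}$ has strictly positive dimension. This verifies the defining inequality of the preceding definition, and so $\mathcal{A}$ is degenerate, as claimed.

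I expect the genuine difficulty to sit entirely in the second paragraph: justifying that box-counting dimension is non-increasing under orthogonal projection and that projecting onto a coordinate subspace factors through any intermediate coordinate subspace, so that the three pairwise projected dimensions collapse to the common value $d$. The remaining steps are elementary bookkeeping. One should also be careful that the regularity hypothesis is used exactly to make "$\dim$" well defined (upper equal to lower Minkowski dimension) for $\mathcal{A}$ and all of its projections, since the co-dimensions $d_{\mathcal{V}}$ are defined through the Minkowski dimension of those projections.
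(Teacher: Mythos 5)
Your proposal is correct and follows essentially the same route as the paper: compute every co-dimension in terms of $d=\dim\mathcal{A}$, so the left side is $2n-4d$, the right side is $2n-3d$, and the strict inequality is exactly $d>0$. In fact you supply a detail the paper glosses over with the single phrase ``all projections do not change the dimension'' --- namely that the three \emph{pairwise} projections also have dimension $d$, which you justify correctly via the Lipschitz upper bound $\dim(P_{\mathcal{V}\oplus\mathcal{W}}\mathcal{A})\le d$ together with the factoring $P_{I}=P_{I}\circ P_{I\oplus J}$ for the lower bound (note also that the conclusion should read ``non-degenerate''; the word ``degenerate'' in the statement is a typo, and your argument proves precisely the defining inequality of non-degeneracy).
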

\begin{proof}
Since all projections do not change the dimension of $\mathcal{A}$, we
have
\begin{eqnarray*}
&& d_{I}+d_{J}+d_{\mathcal{O}}+d_{\mathbb{R}^{n}} \\
&=& (\mathrm{dim}(I) -
  \mathrm{dim} (A) )+ (\mathrm{dim}(J) -
  \mathrm{dim} (A) ) + (\mathrm{dim}(\mathcal{O}) -
  \mathrm{dim} (A) ) + n - \mathrm{dim}(A) \\
&<&  (\mathrm{dim}(I) -
  \mathrm{dim} (A) )+ (\mathrm{dim}(J) -
  \mathrm{dim} (A) ) + (\mathrm{dim}(\mathcal{O}) -
  \mathrm{dim} (A) ) \\
&& + \mathrm{dim}(I) + \mathrm{dim}(J) + \mathrm{dim}(\mathcal{O})\\
&=&  d_{I\oplus J}+d_{I \oplus
            \mathcal{O}}+d_{J\oplus \mathcal{O}} \,.
\end{eqnarray*}

\end{proof}

The following theorem says that geometric complexity of the global
attractor of a system can imply its degeneracy.


\begin{thm} {\rm (Non-degenerate Attractor)}
\label{twisted} Assume that both {\bf H$^{0}$)} and {\bf H$^{1}$)}
hold. If the global attractor $\mathcal{A}$ is non-degenerate  and each
$\mu_{\epsilon}$ is regular with respect to $\mathcal{A}$, then
there exists an $\epsilon_{0}>0$, such that
$\mathcal{D}_{\epsilon,\sigma}>0$ for all $\epsilon \in (0, \epsilon_{0})$.
\end{thm}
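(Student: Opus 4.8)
The plan is to produce a single output set $\mathcal{O}$ together with a single input decomposition for which the multivariate mutual information is strictly positive once $\epsilon$ is small enough; since every summand in \eqref{degeneracy2} is nonnegative, one strictly positive term forces $D(\mathcal{O})>0$, whence $\mathcal{D}_{\epsilon,\sigma}=\max_{\mathcal{O}}D(\mathcal{O})>0$. I take $\mathcal{O}$ to be exactly the output subspace supplied by the non-degeneracy hypothesis, so that $\mathbb{R}^{n}=I\oplus J\oplus\mathcal{O}$, the input set is $I\oplus J$, and I use the single splitting $I_{k}=I$, $I_{k}^{c}=J$. It then suffices to show $MI(I;J;\mathcal{O})>0$ for all sufficiently small $\epsilon$.

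First I would rewrite this multivariate mutual information entirely in terms of projected differential entropies. Expanding each pairwise term through $MI(\mathcal{U};\mathcal{W})=H(\mathcal{U})+H(\mathcal{W})-H(\mathcal{U}\oplus\mathcal{W})$ and using $H(I\oplus J\oplus\mathcal{O})=\mathcal{H}(\mu_{\epsilon})$ yields
\[
MI(I;J;\mathcal{O})=H(I)+H(J)+H(\mathcal{O})-H(I\oplus J)-H(I\oplus\mathcal{O})-H(J\oplus\mathcal{O})+\mathcal{H}(\mu_{\epsilon}).
\]
Dividing by $\log\epsilon<0$ and letting $\epsilon\to0$, the crux is the entropy--dimension relation for each coordinate projection,
\[
\lim_{\epsilon\to0}\frac{H(\mathcal{V})}{\log\epsilon}=d_{\mathcal{V}},\qquad \mathcal{V}\in\{I,J,\mathcal{O},I\oplus J,I\oplus\mathcal{O},J\oplus\mathcal{O},\mathbb{R}^{n}\},
\]
with $d_{\mathbb{R}^{n}}=n-d$ and $\mathcal{H}(\mu_{\epsilon})=H(\mathbb{R}^{n})$. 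Granting this, the limit of $MI(I;J;\mathcal{O})/\log\epsilon$ equals $(d_{I}+d_{J}+d_{\mathcal{O}}+d_{\mathbb{R}^{n}})-(d_{I\oplus J}+d_{I\oplus\mathcal{O}}+d_{J\oplus\mathcal{O}})$, which is strictly negative precisely by the defining inequality of non-degeneracy. Since $\log\epsilon\to-\infty$, this forces $MI(I;J;\mathcal{O})\to+\infty$, so it is positive for all small $\epsilon$, and the theorem follows.

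The main obstacle is the projected entropy--dimension relation itself, because Theorem \ref{EntDimThm} is stated only for the stationary measure $\mu_{\epsilon}$ of the full Fokker--Planck equation, while each projection $\mu_{\epsilon}^{\mathcal{V}}$ solves no Fokker--Planck equation on $\mathcal{V}$. My approach is to re-run the argument of Theorem \ref{EntDimThm} for the projected family, noting that it uses only two ingredients, both inherited under projection: exponential concentration of the measure near the relevant set, which the projections acquire from {\bf H$^{1}$)} through the tail and sup bounds of Lemma \ref{projbounds} (together with Theorem \ref{accurate}), and regularity of the family, namely an almost-uniform density approximation supported on an $O(\epsilon)$-neighborhood. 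To obtain the latter for $\mu_{\epsilon}^{\mathcal{V}}$ I would push the approximating densities $u_{K,\epsilon}$ forward by $P_{\mathcal{V}}$: the $L^{1}$-error cannot increase because projection is an $L^{1}$-contraction via Fubini, the push-forward is supported in $B(P_{\mathcal{V}}\mathcal{A},K\epsilon)$, and the inf/sup comparison \eqref{regular} survives up to a factor governed by the comparability of fiber volumes over the $\epsilon$-neighborhood. The genuinely delicate points are that $P_{\mathcal{V}}\mathcal{A}$ must remain a regular set in the relevant range of scales, so that $\dim\mathcal{V}-\dim(P_{\mathcal{V}}\mathcal{A})=d_{\mathcal{V}}$ really is the exponent governing the neighborhood volume, and that the projected family stays regular with respect to $P_{\mathcal{V}}\mathcal{A}$; verifying these inheritance properties is where the real work concentrates, whereas the dimensional bookkeeping that closes the argument is immediate from the non-degeneracy definition.
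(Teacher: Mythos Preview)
Your proposal is essentially the paper's proof: both reduce to establishing the projected entropy--dimension relation $\lim_{\epsilon\to 0} H(\mathcal{V})/\log\epsilon = d_{\mathcal{V}}$ for each coordinate subspace, obtained by projecting the regular approximants $u_{K,\epsilon}$ and controlling the $L^{1}$-small remainder with the bounds of Lemma~\ref{projbounds}, and then combining these limits with the non-degeneracy inequality to force $MI(I;J;\mathcal{O})\to+\infty$. The only cosmetic difference is that the paper estimates $H(\mathcal{V})$ directly via elementary $x\log x$ inequalities rather than re-verifying abstract regularity for the projected family, but the substance---including the implicit reliance on regularity of $P_{\mathcal{V}}\mathcal{A}$ and the fiber-volume comparability you correctly flag---is the same.
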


\begin{proof}
Since each $\mu_\epsilon$ is regular, we have by Theorem
~\ref{EntDimThm} that
\begin{equation*}
   \lim_{\epsilon\rightarrow
     0}\frac{ \mathcal{H} (\mu_{\epsilon})}{\log \epsilon} = n-d \,.
\end{equation*}

Let $I$ be a coordinate  subspace of $\mathbb{R}^{n}$ and $P$ be the
projection operator onto $I$. For simplicity, we suspend the
$\epsilon$-dependency and let $u(x)$ be the density function of
$\mu_\epsilon$ for fixed $\epsilon$. Denote $u_{I} = Pu$ as the
marginal distribution of $u(x)$ on $I$. We first show that all
marginal distribution $u_{I}$ satisfy the entropy-dimension
identity.

For a fixed $\delta>0$, it follows from the definition of a regular
 invariant measure with respect to $\mathcal{A}$ that there
exist $K<\infty$, $\epsilon_{1}>0$ and a family of approximate
functions $u_{K, \epsilon}$ supported on $B( \mathcal{A}, K\epsilon)$
such that for all $\epsilon \in (0, \epsilon_{1})$, the $L^{1}$ error
between $u_{K, \epsilon}$ and $u$ is smaller than $\delta$.

Let $u_{2} = u-u_{K,\epsilon}$, $\bar{u}_{1} = Pu_{K, \epsilon}$ and
$\bar{u}_{2} = Pu_{2}$. Then the projected entropy on $I$ satisfies
$$
  \int_{I} u_{I}(x) \log u_{I}(x)\mathrm{d}x = \int_{I}(\bar{u}_{1}
    (x_{1})+\bar{u}_{2}(x_{1})) \log(\bar {u}_{1}(x_{1}) + \bar{u}_{2} (x_{1}))
  \mathrm{d}x_{1}.
$$

Therefore,
\begin{eqnarray*}
  H(I)= H(Pu)  & = & \int_{I}(\bar{u}_{1}(x_{1})+\bar{u}_{2}(x_{1})) \log(\bar{u}_{1}(x_{1}) + \bar{u}_{2}(x_{1}))
  \mathrm{d}x_{1}\\
& = &  \int_{I}(\bar{u}_{1}(x_{1})+\bar{u}_{2}(x_{1})) \left [ \log \bar{u}_{1}(x_{1}) +
  \log (1 + \frac{ \bar{u}_{2}(x_{1})}{\bar{u}_{1}(x_{1})} ) \right ]
\mathrm{d}x_{1} \\
&\geq& \int_{I}(\bar{u}_{1}(x_{1})+\bar{u}_{2}(x_{1})) \left [ \log \bar{u}_{1}(x_{1}) +
  \frac{ \bar{u}_{2}(x_{1})/ \bar{u}_{1}(x_{1})}{1 + \bar{u}_{2}(x_{1})/ \bar{u}_{1}(x_{1})}
\right ] \mathrm{d}x_{1} \\
&\geq& \int_{I} \bar{u}_{1}(x_{1}) \log \bar{u}_{1} (x_{1})\mathrm{d}x_{1} - \int_{I} |
\bar{u}_{2}(x_{1})|(1 +  |\log \bar{u}_{1}(x_{1})|) \mathrm{d}x_{1} := I_{1} - I_{2} \,.
\end{eqnarray*}
Furthermore, it follows from the convexity of $x\log x$ that
\begin{eqnarray*}
&&H(I)= H(Pu) \\
&= &  \int_{I}(\bar{u}_{1}(x_{1})+\bar{u}_{2}(x_{1})) \log(\bar{u}_{1}(x_{1}) + \bar{u}_{2}(x_{1}))
  \mathrm{d}x_{1}\\
&\leq& \int_{I}(\bar{u}_{1}(x_{1})+|\bar{u}_{2}(x_{1})|) \log(\bar{u}_{1}(x_{1}) + |\bar{u}_{2}(x_{1})|)
  \mathrm{d}x_{1} \\
&&+ 2 \int_{I} | \bar{u}_{2}(x_{1})| |\log( \bar{u}_{1}(x_{1}) + |
  \bar{u}_{2}(x_{1})|) | \mathrm{d}x_{1}\\
&\leq& 2\int_{I}\frac{\bar{u}_{1}(x_{1})+|\bar{u}_{2}(x_{1})|}{2}
\log(\frac{\bar{u}_{1}(x_{1}) + |\bar{u}_{2}
    (x_{1})|}{2}) \mathrm{d}x_{1} \\
&& +2 \int_{I} | \bar{u}_{2}(x_{1})| |\log( \bar{u}_{1}(x_{1}) + |
  \bar{u}_{2}(x_{1})|)| \mathrm{d}x_{1}\\
&& + \log 2\\
&\leq& \int_{I} \bar{u}_{1}(x_{1})\log \bar{u}_{1}(x_{1})
\mathrm{d}x_{1} \\
&&+ \int_{I}
|\bar{u}_{2}(x_{1})|\left[ \log |\bar{u}_{2}(x_{1})| + |\log( \bar{u}_{1}(x_{1}) + |
  \bar{u}_{2}(x_{1})|)|  \right ]\mathrm{d}x_{1} + \log 2\\
&:=& I_{1} + I_{2} + \log 2 \,.
\end{eqnarray*}

To estimate $I_1$, we note from Section 2.3 the definitions
of regular set and stationary measure that there are constants
$C_{1},C_{2}$ independent of $\epsilon$ such that
$$
  (1-\delta){ d_I} (- \log \epsilon) -C_{1} \leq I_{1}\leq { d_I} (-\log \epsilon) +
  C_{2}.
$$

To estimate $I_2$, we note that
$$
\int_{I} |\bar{u}_{2}|(x) \mathrm{d}x = \int_{\mathbb{R}^{n}} |u_{2}|(x) \mathrm{d}x < \delta
$$
and from Lemma 3.3  that $|\bar{u}_{2}(x)| <
\epsilon^{-(2n+2)}$. Thus $I_{2} \leq (2n+2) \delta(-\log
\epsilon)$. Similarly $I_{3} \leq (4n+4) \delta(-\log \epsilon)$. Summarizing the above, we have
$$
  (1-\delta) d_I\leq \lim_{\epsilon\rightarrow 0}
  \frac{H(I)}{-\log \epsilon} \leq (1+3 (2n+2)\delta)d_I \,.
$$
As the above inequality holds for any $\delta>0$, we have
\begin{equation}\label{ent-dim}
 \lim_{\epsilon\rightarrow 0}
  \frac{H(I)}{-\log \epsilon} = d_I.
\end{equation}

Let $\mathbb{R}^{n} = I \oplus J \oplus \mathcal{O}$ be a coordinate
decomposition such that
$$
  d_{I} + d_{J} + d_{\mathcal{O}} + d_{\mathbb{R}^{n}} < d_{I\oplus
  J} + d_{I \oplus \mathcal{O}} + d_{J \oplus \mathcal{O}} \,.
$$
Since
$$
  MI(I;J; \mathcal{O}) = H(I) + H(J) +
  H(O) + H(\mathbb{R}^{n}) - H(I
  \oplus J) - H(I\oplus \mathcal{O}) - H(J
  \oplus \mathcal{O}),
$$
applications of \eqref{ent-dim} to $I,J,\cal O,I\oplus
  J, I \oplus \mathcal{O},J \oplus \mathcal{O} $, respectively,
yield that
$$
  MI(I;J; \mathcal{O})  \simeq (d_{I} + d_{J} + d_{\mathcal{O}} + d_{\mathbb{R}^{n}} - d_{I\oplus
  J} - d_{I \oplus \mathcal{O}} - d_{J \oplus \mathcal{O}} )\log
\epsilon > 0,
$$
from which the theorem follows.
\end{proof}

\medskip




\begin{ex}
Consider the system
\begin{equation}
  \label{limitcycle}
 \left\{\begin{array}{l}
  x' = y + x(1 - x^{2} - y^{2}) + \epsilon \mathrm{d}W_{t}\\
 y' = -x + y(1 - x^{2} - y^{2}) + \epsilon \mathrm{d}W_{t}\\
  z' =  - z + \epsilon \mathrm{d}W_{t}
\end{array}\right.
\end{equation}
It is easy to verify that
$$
  v(x,y,z) = \frac{1}{Z}\exp \{ -\epsilon^{-2} ( \frac{1}{2}z^{2} + \frac{1}{4}
  (1 - x^{2} - y^{2})^{2}\}
$$
is a stationary density function of \eqref{limitcycle}, where $Z$ is
the normalizer. Therefore assumption {\bf H$^{1}$)} is satisfied and
function $v(x,y,z)$ is regular with respect to
$\mathcal{A} = \{ (x,y,z) : \, x^{2} + y^{2} = 1 \}$. However,
$\mathcal{A}$ is not a non-degenerate attractor because $\mathcal{A}$
lies on the plane $z = 0$.

If we change coordinates such that $\mathcal{A}$ is not contained
in any coordinate subspace, e.g. \'  via
coordinate change $(x,y,z) = (u,v,u+v+w)$, then  under
the new coordinate $\mathcal{A}$ becomes a non-degenerate attractor
and Theorem ~\ref{twisted} is applicable to system
\eqref{limitcycle}.


\end{ex}

\subsection{Simple robust systems}
Degenerate phenomenon can also occur when the attractor $\cal A$ of
system \eqref{ODE1} is both geometrically and dynamically simple.
Below, we exam the case of a simple system in which the global
attractor $\cal A$ is an  exponentially attracting
equilibrium - a so-called homeostatic system in  biological term.
 We note that  such a system automatically satisfy the
condition {\bf H$^0$)}, hence it is  robust according to
Propositions~\ref{prop4.1}. We will show that if in a neighborhood
of the globally attracting equilibrium different directions
demonstrate different sensitivities with respect to the noise
perturbation, then the system must be degenerate.

Let $S=(s_{ij})$ be an $n\times n$ matrix and $I$ be a coordinate
subspace of $\R^n$ spanned by standard unit vectors
$\{e_{i_1},\cdots,e_{i_k}\}$ for some $k\le n$. Denote
$S(I)=(a_{i_li_m})_{1\le l,m\le k}$  and $|S(I)|$ the determinant of
$S(I)$.

\begin{thm}\label{degfixpt} {\rm (Degeneracy of simple systems)}
Assume that {\bf H$^{1}$)}  holds, $\mathcal{A}$ is an equilibrium
$\{x_0\}$,  and all eigenvalues of $Df(x_0)$ have
negative real parts. Then the following holds:
\begin{itemize}
\item[{\rm a)}] With respect to any coordinate decomposition
$\R^n=I_1\oplus I_2\oplus \mathcal{O}$,
\begin{equation}
\label{degfix} \lim_{\epsilon\rightarrow 0} MI(I_{1};I_{2};\mathcal{O})  =
\frac{1}{2}  \log
\frac{|S({{I}_{1}})||S({{I}_{2}})||S(\mathcal{O})||S(I_{1}\oplus I_{2}\oplus
\mathcal{O})|}{|S({{I}_{1}}\oplus{{I}_{2}})||S({{I}_ {1}}\oplus
\mathcal{O})||S({{I}_{2}} \oplus \mathcal{O})|} \,,
\end{equation}
where $S$ solves equation
$$
     S{{J}^{\top}}+JS+A(x_{0})=0 \,.
$$
Consequently, if, with respect to a given coordinate decomposition
$\R^{n} = I_{1} \oplus I_{2} \oplus \mathcal{O}$,
\begin{equation}
\label{eq610}
  \log
\frac{|S({{I}_{1}})||S({{I}_{2}})||S(\mathcal{O})||S(I_{1}\oplus I_{2}\oplus
\mathcal{O})|}{|S({{I}_{1}}\oplus{{I}_{2}})||S({{I}_ {1}}\oplus
\mathcal{O})||S({{I}_{2}} \oplus \mathcal{O})|} >0 \,,
\end{equation}
then the $\sigma$-degeneracy of system \eqref{ODE1} is positive.

\item[{\rm b)}] The $\sigma$-degeneracy of \eqref{ODE1} continuously depends on
$Df(x_0)$.
\end{itemize}
\end{thm}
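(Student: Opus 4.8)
The plan is to reduce everything to the local Gaussian structure of $\mu_\epsilon$ near the stable equilibrium, which is already supplied by the WKB analysis in the proof of Proposition~\ref{rfixpt}. Writing $J=Df(x_0)$ and taking $x_0=0$ without loss of generality, that proof shows that on the shrinking ball $N=B(x_0,\epsilon^{0.9})$ the density $u_\epsilon$ is approximated by the centered Gaussian
\[
 u_0(x)=\frac{1}{K_0}\exp\!\Big(-\tfrac{1}{2\epsilon^2}(x-x_0)^\top S^{-1}(x-x_0)\Big),
\]
of covariance $\epsilon^2 S$, where $S$ is the symmetric positive-definite solution of $SJ^\top+JS+A(x_0)=0$, and that $|u_\epsilon/u_0-1|=O(\epsilon^{0.7})$ on $N$ while $1-\mu_\epsilon(N)$ and the Gaussian tail mass are both $o(\epsilon^2)$. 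The elementary fact I would exploit is that the marginal of a centered Gaussian onto a coordinate subspace $\mathcal V$ is again centered Gaussian with covariance the principal submatrix $\epsilon^2 S(\mathcal V)$, so its differential entropy equals $\tfrac{|\mathcal V|}{2}\log(2\pi e\,\epsilon^2)+\tfrac12\log|S(\mathcal V)|$.

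First I would prove the projected-entropy asymptotics
\[
 H(\mathcal V)=\frac{|\mathcal V|}{2}\log(2\pi e\,\epsilon^2)+\frac12\log|S(\mathcal V)|+o(1)\qquad(\epsilon\to0)
\]
for every coordinate subspace $\mathcal V$. Over the central region one replaces $(u_\epsilon)_{\mathcal V}$ by $(u_0)_{\mathcal V}$ in $-\int (u_\epsilon)_{\mathcal V}\log(u_\epsilon)_{\mathcal V}$; since $\log(u_0)_{\mathcal V}$ has size $|\mathcal V|\log(1/\epsilon)$, the $O(\epsilon^{0.7})$ relative error produces only an $O(\epsilon^{0.7}\log(1/\epsilon))=o(1)$ discrepancy. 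Over $\R^n\setminus N$ the projected mass is $o(\epsilon^2)$, so Lemma~\ref{projbounds} together with Lemmas~\ref{entout} and~\ref{entin} bounds the tail of the projected entropy by $o(1)$. Feeding these seven asymptotics into
\[
 MI(I_1;I_2;\mathcal O)=H(I_1)+H(I_2)+H(\mathcal O)+H(\R^n)-H(I_1\oplus I_2)-H(I_1\oplus\mathcal O)-H(I_2\oplus\mathcal O),
\]
the additive dimensions $|I_1|+|I_2|+|\mathcal O|+n$ and the subtractive dimensions $(|I_1|+|I_2|)+(|I_1|+|\mathcal O|)+(|I_2|+|\mathcal O|)$ both equal $2n$, so the $\log(2\pi e)$ and $\log\epsilon^2$ contributions cancel identically and the $o(1)$ terms vanish in the limit, leaving exactly the determinant ratio of \eqref{degfix}. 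This establishes part~a).

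The ``consequently'' clause is then immediate: if \eqref{eq610} holds for some decomposition $\R^n=I_1\oplus I_2\oplus\mathcal O$, the limit of $MI(I_1;I_2;\mathcal O)$ is strictly positive, hence $\max\{MI(I_1;I_2;\mathcal O),0\}>0$ for all small $\epsilon$; as $D(\mathcal O)$ in \eqref{degeneracy2} is a sum of nonnegative terms including this one, $D(\mathcal O)>0$, so $\mathcal D_{\epsilon,\sigma}=\max_{\mathcal O}D(\mathcal O)>0$ and therefore $\mathcal D_\sigma>0$. For part~b) I would use that, on the open set of Hurwitz matrices, the Lyapunov equation has the unique solution $S=\int_0^\infty e^{Jt}A(x_0)e^{J^\top t}\,\mathrm dt$, which depends continuously (indeed smoothly) on $J$; consequently every principal determinant $|S(\mathcal V)|$ is continuous in $J$, and since there are only finitely many output spaces $\mathcal O$ and decompositions $I_k\cup I_k^c$, the limit $\lim_{\epsilon\to0}\mathcal D_{\epsilon,\sigma}=\max_{\mathcal O}\lim_{\epsilon\to0}D(\mathcal O)$ is a finite maximum of averages of $\max\{\tfrac12\log(\cdot),0\}$ applied to continuous functions of $J$, hence continuous in $Df(x_0)$.

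The principal obstacle is the projected-entropy estimate in the second step. The delicate point is that $H(\mathcal V)$ itself diverges like $|\mathcal V|\log(1/\epsilon)$, so to capture the correct \emph{constant} $\tfrac12\log|S(\mathcal V)|$ one must control $u_\epsilon$ to a relative accuracy that survives multiplication by the logarithmically large factor $\log(1/\epsilon)$; identifying the leading order alone does not suffice. Equally, one must verify that marginalizing over the complementary coordinates preserves the Gaussian approximation and its error bound near $x_0$, and that the contribution of the exponentially small tails to each projected entropy is genuinely $o(1)$ and uniform over the finitely many subspaces appearing in the mutual-information identity. Once these estimates are in place, parts~a) and~b) follow from the algebra and continuity arguments above.
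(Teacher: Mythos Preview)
Your proposal is correct and follows essentially the same route as the paper: approximate $u_\epsilon$ by the Gaussian $u_0$ with covariance $\epsilon^2 S$ via the WKB expansion, split each projected entropy into a central piece (where the $O(\epsilon^{0.7})$ relative error times the $O(\log(1/\epsilon))$ size of $\log u_0$ gives $o(1)$) and a tail piece controlled by Lemmas~\ref{entout}, \ref{entin}, \ref{projbounds}, then observe the dimensional terms cancel in the seven-term mutual-information identity. The only cosmetic differences are the choice of shrinking radius ($\epsilon^{0.9}$ versus the paper's $\epsilon^{4/5}$) and, for part~(b), your use of the integral representation $S=\int_0^\infty e^{Jt}A(x_0)e^{J^\top t}\,\mathrm dt$ in place of the paper's Kronecker-product vectorization of the Lyapunov equation---both are standard and equally valid.
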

\begin{proof}
For simplicity, denote $J=Df(x_0)$, $A = A(x_{0})$, and $u(x)$ as
the density function of $\mu_\epsilon$.

a) By \cite{day1985some,
  ludwig1975persistence, ery1993noise}, $u(x)$ admits the following WKB expansion
\begin{equation}
\label{WKB1}
   u(x) = \frac{1}{K}e^{-V(x)/\epsilon^{2}}w(x) + o(\epsilon^{2})
\end{equation}
for some quasipotential function $V(x)$ and some $C^{1}$  function
$w(x)$ with $w(x_{0}) = 1$. Moreover,  $V(x)$ is twice
differentiable in an open neighborhood $N(x_{0})$ of $x_0$ and it
can be approximated by $x^{\top}S^{-1} x/2$, where  $S$ is the
positive definite matrix uniquely solving the Lyapunov equation
\begin{equation}
   \label{Lyeqn}
   S{{J}^{\top}}+JS+A=0 \,.
\end{equation}

  Let $\nu_{\epsilon}$ be the Gibbs measure with density function
\begin{equation}
\label{gibbs}
  u_{0} (x) = \frac{1}{K_{0}}e^{-x^{\top}S^{-1}x/2\epsilon^{2}} \,,
\end{equation}
where $K_{0}$ is the normalizer. Obviously $u_{0}$ is a multivariate
 with covariance matrix $\epsilon^{2}S$. The margin of $u_{0}$ on
any coordinate subspace  $I$ has covariance matrix
$\epsilon^{2}S(I)$. Recall that the entropy of a $k$-variable normal
distribution with covariance matrix $\Sigma$ reads $
  \frac{1}{2}\log ((2\pi e)^{k}|\Sigma|).
$ Using this fact, simple calculations  show that, with respect to
any coordinate decomposition $\R^n=I_1\oplus I_2\oplus \mathcal{O}$, the
multivariate mutual information $MI_0(I_{1}; I_{2}; \mathcal{O})$ of $u_{0}$ satisfies
$$
  \lim_{\epsilon\to 0} MI_0(I_{1}; I_{2}; \mathcal{O}) =  \frac{1}{2}  \log
\frac{|S({{I}_{1}})||S({{I}_{2}})||S(\mathcal{O})||S(I_{1}\oplus I_{2}\oplus
\mathcal{O})|}{|S({{I}_{1}}\oplus{{I}_{2}})||S({{I}_ {1}}\oplus
\mathcal{O})||S({{I}_{2}} \oplus \mathcal{O})|}.
$$

The proof of \eqref{degfix} amounts to  show that
\begin{equation}\label{degfix1}
\lim_{\epsilon\to 0}| MI(I_{1}; I_{2}; \mathcal{O})- MI_0(I_{1}; I_{2}; \mathcal{O})|=0.
\end{equation}
We first show that
\begin{equation}
\label{pg111}
   \lim_{\epsilon\rightarrow 0}|H(\mu_{\epsilon}) -
   H(\nu_{\epsilon})| = 0 \,.
\end{equation}
Without loss of generality, we assume that the isolating neighborhood
$\mathcal{N}$ in  {\bf H$^1$)}   satisfies $\mathcal{N} \subseteq
N(x_{0})$. Let $\Delta_{\epsilon} = \{x |\|x - x_{0}\| \leq
\epsilon^{4/5}\} $. We will prove \eqref{pg111} in two steps.
\medskip

\noindent{\em Claim 1:} $\displaystyle
 \lim_{\epsilon\rightarrow 0} \int_{\mathbb{R}^{n}\setminus\Delta_{\epsilon}} u(x) \log u(x)
  \mathrm{d}x =\lim_{\epsilon\rightarrow 0} \int_{\mathbb{R}^{n}\setminus\Delta_{\epsilon}} u_{0}(x) \log u_{0}(x)
  \mathrm{d}x =0.
$
\medskip

On one hand, since both $u_{0}(x)$ and $u(x)$ satisfy {\bf H$^{1}$)},
by Lemma \ref{entbound} we
have
$$
  u_{0}(x) < \epsilon^{-(2n+1)}, \quad u(x) <
  \epsilon^{-(2n+1)},\quad \epsilon\ll 1,
$$
and
$$
  \int_{\mathbb{R}^{n}\setminus\Delta_{\epsilon}} u(x) \mathrm{d}x
  \sim o(\epsilon^{2}).
$$
It is also clear that
$$
  \int_{\mathbb{R}^{n}\setminus\Delta_{\epsilon}} u_{0}(x) \mathrm{d}x
  \sim o(\epsilon^{2}).
$$
It follows that
$$
  \lim_{\epsilon\rightarrow 0} \int_{\mathbb{R}^{n}\setminus\Delta_{\epsilon}} u(x) \log u(x)
  \mathrm{d}x \leq \lim_{\epsilon\rightarrow 0} \epsilon^{2}\log
  \epsilon = 0
$$
and
$$
  \lim_{\epsilon\rightarrow 0} \int_{\mathbb{R}^{n}\setminus\Delta_{\epsilon}} u_{0}(x) \log u_{0}(x)
  \mathrm{d}x \leq \lim_{\epsilon\rightarrow 0} \epsilon^{2}\log
  \epsilon = 0 \,.
$$

On the other hand, we have by Lemmas ~\ref{entout},~ \ref{entin} that
there is a  constant $R_{0}
> 0 $ such that
\begin{eqnarray*}
&&  \int_{\mathbb{R}^{n}\setminus\Delta_{\epsilon}} u(x) \log u(x)
  \mathrm{d}x\\
&=& \int_{\mathbb{R}^{n}\setminus B(0, R_{0})} u(x) \log u(x)
  \mathrm{d}x +
  \int_{B(0,R_{0})\setminus \Delta_{\epsilon}} u(x) \log u(x)
  \mathrm{d}x \geq -\epsilon^{2} - 2 \sqrt{\epsilon},\\
&& \int_{\mathbb{R}^{n}\setminus\Delta_{\epsilon}} u_{0}(x) \log
u_{0}(x)
  \mathrm{d}x\\
& =&\int_{\mathbb{R}^{n}\setminus B(0, R_{0})} u_{0}(x) \log u_{0}(x)
  \mathrm{d}x +
  \int_{B(0,R_{0})\setminus \Delta_{\epsilon}} u_{0}(x) \log u_{0}(x)
  \mathrm{d}x\geq -\epsilon^{2} - 2 \sqrt{\epsilon},
\end{eqnarray*}
whenever $\epsilon$ is sufficiently small. Hence
$$
 \lim_{\epsilon\rightarrow 0} \int_{\mathbb{R}^{n}\setminus\Delta_{\epsilon}} u(x) \log u(x)
  \mathrm{d}x  \geq 0,\;\,
 \lim_{\epsilon\rightarrow 0} \int_{\mathbb{R}^{n}\setminus\Delta_{\epsilon}} u_{0}(x) \log u_{0}(x)
  \mathrm{d}x  \geq 0.
$$
This proves Claim 1.
\medskip

\noindent{\em Claim 2}: $\displaystyle\lim_{\epsilon\to 0}
  |\int_{\Delta_{\epsilon}} u(x) \log u(x) \mathrm{d}x -
  \int_{\Delta_{\epsilon}} u_{0}(x)\log u_{0}(x) \mathrm{d}x|=0. $

\medskip

We note that
\begin{displaymath}
   K =
   \frac{1}{\mu_{\epsilon}(\Delta_{\epsilon})}\int_{\Delta_{\epsilon}}
   e^{-V(x)/\epsilon^{2}}z(x)\mathrm{d}x,\;\, K_{0} = \frac{1}{\nu_{\epsilon}(\Delta_{\epsilon})}\int
   e^{-x^{\top}Sx/\epsilon}\mathrm{d}x\,.
\end{displaymath}

It is easy to check that
\begin{eqnarray}
&&\frac{1}{\epsilon^{2}}|V(x) -
\frac{1}{2}(x-x_{0})^{\top}S(x-x_{0})| \sim O(\epsilon^{2/5}),\;\; x
\in \Delta_{\epsilon};\label{(i)}\\
&& w(x) = 1 + O(\epsilon^{0.8}),\;\; x \in {\cal N};\label{(ii)}\\
&& 1 - \mu_{\epsilon}(\Delta_{\epsilon}) \sim
o(\epsilon^{2});\label{(iii)}\\
 && \int_{\mathbb{R}^{n}\setminus \Delta_{\epsilon}} u_{0}(x) \mathrm{d}x \sim
  o(\epsilon^{2}) \,.\label{(iv)}
\end{eqnarray}
It follows from straightforward calculations using
\eqref{(i)}-\eqref{(iv)} that $|\frac{K}{K_{0}} - 1 | \sim
O(\epsilon^{2/5})$. Thus,
$$
  |\frac{u(x)}{u_{0}(x)} - 1 | \sim O(\epsilon^{2/5}),\;\;\, x\in \cal
  N,
$$
and consequently,
\begin{eqnarray*}
&&|\int_{\Delta_{\epsilon}} u(x) \log u(x) \mathrm{d}x
-\int_{\Delta_{\epsilon}} u(x) \log u(x) \mathrm{d}x |\\
&\leq& \int_{\Delta_{\epsilon}} u(x) |\log(\frac{u(x)}{u_{0}(x)}|
\mathrm{d}x + \int_{\Delta_{\epsilon}} |u_{0}(x)\log u_{0}(x)
(\frac{u(x)}{u_{0}(x)} - 1) | \mathrm{d}x\\
&=& O(\epsilon^{2/5}) + O( \epsilon^{2/5}\log \epsilon).
\end{eqnarray*}
This proves Claim 2. \eqref{pg111} now follows from the above two
claims.

\medskip

Next, we show that with respect to any coordinate subspace the
projected entropy of  $u_{0}$ is still an approximation of that of
$u$.

Let $x = (x_{1}, x_{2})$ be a decomposition of coordinates of
$\mathbb{R}^{n}$ and let $\bar{u}(x_{1})$ and $\bar{u}_{0}(x_{1})$
be the projection of $u$ and $u_{0}$ respectively such that $x_{1} \in
\mathbb{R}^{m}$. Denote
$\bar{\Delta}_{\epsilon} = \{x_{1}:\, |x_{1}| < \epsilon^{4/5} \}$.
Then the same proof as that for Claim 1 yields that
\begin{equation}
  \label{5-4-1}
\lim_{\epsilon\rightarrow 0} \int_{\mathbb{R}^{m}\setminus\bar{\Delta}_{\epsilon}} \bar{u}(x_{1}) \log \bar{u}(x_{1})
  \mathrm{d}x_{1} =
  \lim_{\epsilon\rightarrow 0} \int_{\mathbb{R}^{m}\setminus\bar{\Delta}_{\epsilon}} \bar{u}_{0}(x_{1}) \log \bar{u}_{0}(x_{1})
  \mathrm{d}x_{1} =0 \,.
\end{equation}

Denote
$$
  \hat{u}(x_{1}) = \int_{\{|x_{2}| \leq \epsilon^{4/5}\}} u(x_{1}, x_{2})
  \mathrm{d}x_{2},\;\;
  \hat{u}_{0}(x_{1}) = \int_{\{|x_{2}| \leq \epsilon^{4/5}\}} u_{0}(x_{1}, x_{2})
  \mathrm{d}x_{2} \,.
$$
Similar to the proof of Claim 2, we have
\begin{equation}
  \label{5-4-2}
\lim_{\epsilon\rightarrow 0}  |\int_{\bar{\Delta}_{\epsilon}}
\hat{u}(x_{1})\log \hat{u}(x_{1}) \mathrm{d}x_{1} - \int_{\bar{\Delta}_{\epsilon}}
\hat{u}_{0}(x_{1})\log
  \hat{u}_{0}(x_{1}) \mathrm{d}x_{1} | = 0.
\end{equation}

Note that
\begin{eqnarray*}
 & &| \int_{\mathbb{R}^{m}} \bar{u}(x_{1}) \log \bar{u}(x_{1}) \mathrm{d}x_{1} - \int_{\mathbb{R}^{m}} \bar{u}_{0}(x_{1})
 \log \bar{u}_{0}(x_{1}) \mathrm{d}x_{1} | \\
&\leq&| \int_{\mathbb{R}^{m}\setminus\bar{\Delta}_{\epsilon}} \bar{u}(x_{1})
\log \bar{u}(x_{1}) \mathrm{d}x_{1}| + |
\int_{\mathbb{R}^{m}\setminus\bar{\Delta}_{\epsilon}} \bar{u}_{0}(x_{1})\log
\bar{u}_{0}(x_{1}) \mathrm{d}x_{1} | \\
&& +  |\int_{\bar{\Delta}_{\epsilon}}
\hat{u}(x_{1})\log \hat{u}
  (x_{1})\mathrm{d}x_{1} - \int_{\bar{\Delta}_{\epsilon}}
\hat{u}_{0}(x_{1})\log
  \hat{u}_{0} (x_{1})\mathrm{d}x_{1} | \\
&& + |\int_{\bar{\Delta}_{\epsilon}} \hat{u}(x_{1})\log \hat{u}(x_{1}) \mathrm{d}x_{1} - \int_{\bar{\Delta}_{\epsilon}} \bar{u}(x_{1})\log
  \bar{u}(x_{1}) \mathrm{d}x_{1} | \\
&&+ |\int_{\bar{\Delta}_{\epsilon}}
  \hat{u}_{0}(x_{1})\log \hat{u}_{0}(x_{1}) \mathrm{d}x_{1} -
  \int_{\bar{\Delta}_{\epsilon} } \bar{u}_{0}(x_{1})\log
  \bar{u}_{0} (x_{1})\mathrm{d}x_{1} | \,.
\end{eqnarray*}

By equations \eqref{5-4-1} and \eqref{5-4-2}, it is sufficient to
show that as $\epsilon \rightarrow 0$,
$$
  \lim_{\epsilon\rightarrow 0} |\int_{\bar{\Delta}_{\epsilon}} \hat{u}(x_{1})\log \hat{u}(x_{1}) \mathrm{d}x_{1} - \int_{\bar{\Delta}_{\epsilon}} \bar{u}(x_{1})\log
  \bar{u} (x_{1})\mathrm{d}x_{1} | = 0,
$$
and
$$
  \lim_{\epsilon\rightarrow 0} |\int_{\bar{\Delta}_{\epsilon}} \hat{u}_{0}(x_{1})\log \hat{u}_{0}(x_{1}) \mathrm{d}x_{1} - \int_{\bar{\Delta}_{\epsilon}} \bar{u}_{0}(x_{1})\log
  \bar{u}_{0} (x_{1})\mathrm{d}x_{1} | = 0.
$$

The convergence with respect to $\bar{u}_{0}$ and $\hat{u}_{0}$
follows directly from the expression of $u_{0}$. For the convergence
of $\hat{u}$ and $\bar{u}$, we have by noting $\bar{u} \geq \hat{u}$
that

\begin{eqnarray*}
&&  |\int_{\bar{\Delta}_{\epsilon}} \bar{u}(x_{1})\log \bar{u}(x_{1})
  \mathrm{d}x_{1} -  \int_{\bar{\Delta}_{\epsilon}}  \hat{u}(x_{1})\log \hat{u}(x_{1})
  \mathrm{d}x_{1} |\\
&\leq& \int_{\bar{\Delta}_{\epsilon}} (\bar{u}(x_{1})- \hat{u}(x_{1}))|\log \bar{u}(x_{1})| \mathrm{d}x_{1} +\int_{\bar{\Delta}_{\epsilon}} \hat{u}(x_{1})(\log \bar{u}(x_{1})- \log \hat{u}(x_{1})) \mathrm{d}x_{1} \\
&:=& I_{1} + I_{2} \,.
\end{eqnarray*}

It follows from {\bf H$^{1}$)} and \eqref{(iii)} that for sufficiently small $\epsilon > 0$,
$$
\int_{\bar{\Delta}_{\epsilon}} ( \bar{u}(x_{1}) - \hat{u}(x_{1})) \mathrm{d}x_{1}
\leq \int_{\mathbb{R}^{m}} (\bar{u}(x_{1})-\hat{u}(x_{1})) \mathrm{d}x_{1} \sim o(\epsilon^{2}) \,.
$$
In addition, for all sufficient small $\epsilon > 0$ and $x \in
\bar{\Delta}_{\epsilon}$, we have by Lemma ~\ref{projbounds} that
$\bar{u}(x) < \epsilon^{-(2n+2)}$ and by the WKB expansion  of $u$
within $\bar{\Delta}_{\epsilon}$ that $\bar{u} \geq \hat{u} \sim e^{-\epsilon^{-2/5}} >
e^{-\epsilon^{-1/2}}$. Therefore $|\log
\bar{u}| < \max\{ -(2n+2)\log \epsilon, \epsilon^{-1/2} \} = \epsilon^{-1/2}$ for sufficiently small $\epsilon$. Thus
$I_{1} \sim O(\epsilon^{3/2})$. Since $\log (1+x)
\leq x$ for $x \geq 0$, we also have
$$
  I_{2} = \int_{\bar{\Delta}_{\epsilon}} \hat{u}(x_{1})\log( 1 + \frac{\bar{u}(x_{1}) - \hat{u}(x_{1})}{\hat{u}(x_{1})})
  \mathrm{d}x_{1} \leq \int_{\bar{\Delta}_{\epsilon}} (\bar{u}(x_{1}) - \hat{u}(x_{1}))  \mathrm{d}x_{1} \sim
  o(\epsilon^{2}) \,.
$$
Therefore
$$
  \lim_{\epsilon\rightarrow 0} |\int_{\bar{\Delta}_{\epsilon}} \hat{u}(x_{1})\log \hat{u}(x_{1}) \mathrm{d}x_{1} - \int_{\mathbb{R}^{m}} \bar{u}(x_{1})\log
  \bar{u} (x_{1})\mathrm{d}x_{1} | = 0 \,.
$$

It follows from Theorem ~\ref{degfixpt} that the
multivariate mutual information of system with stable equilibrium $x_{0}$
can be calculated explicitly to yield \eqref{degfix}.

 b) By the definition of degeneracy, $\mathcal{D}_{\sigma}$ is
continuously dependent on $J$ if for any coordinate decomposition
$\mathbb{R}^{n} = I_{1} \oplus I_{2} \oplus \mathcal{O}$, the limit $\lim_{\epsilon
  \rightarrow 0} MI(I_{1}; I_{2}; \mathcal{O})$ continuously depends on $J$.

\medskip

For any matrix $M \in \mathbb{R}^{n\times n}$, we denote
$\mathrm{vec}(M)$ as the vector in $\mathbb{R}^{n^{2}}$ obtained by
stacking the columns of matrix $M$. Lyapunov equation \eqref{Lyeqn} can be rewritten as
\begin{equation}
   \label{kron}
   (I - \mathrm{Kron}(J^{\top},J^{\top}))\mathrm{vec}(S) = -\mathrm{vec}(A) \,,
\end{equation}
where $\mathrm{Kron}(J^{\top},J^{\top})$ is the Kronecker product
(For more detail, see \cite{graham1982kronecker} ). Then it is easy
to see that the solution $\mathrm{vec}(S)$ continuously depends on the
Jacobian matrix $J$. Thus $S$ continuously depends on $J$.
\end{proof}




\begin{rem}
It is known that a large number of
chemical reaction networks admit unique stable equilibriums \cite{feinberg1987chemical, feinberg1979lectures,
  anderson2005stochastic, anderson2010dynamics, anderson2011proof, feinberg1995existence}. Hence the above theorem concerning
degeneracy near equilibrium is more applicable to these biological/chemical
reaction network models.

Different from systems with non-degenerate attractor, the
$\sigma$-degeneracy of systems with stable equilibrium strongly
depend on the noise matrix $\sigma(x)$. The distribution of the
perturbed system is approximately determined by the solution of
Lyapunov equation \eqref{Lyeqn}. Denote
$$\mathcal{L}_{J}S = -J^{\top}S - JS^{\top} $$
as the Lyapunov operator. It follows from
\cite{Rajendra1997note} that $\mathcal{L}_{J}$ is an invertible
operator in
the space of positive definite matrices provided that matrix $J$ is stable (all eigenvalues of $J$ has negative real parts). This
means that one can always find some perturbation matrix $\sigma(x)$
such that the resulting system has positive $\sigma-$degeneracy.

\end{rem}

\begin{ex}[Enzyme kinetic network]
Consider the following enzyme kinetic network for a substrate
competition model \eqref{enzyme1}, in which two
substrates $S_{1}$ and $S_{2}$ are catalyzed by a single enzyme
$E$. The enzyme can bind its substrates and form enzyme-substrate
complexes ($SE_{1}$ and $SE_{2}$). Products of the two enzyme-catalyzed
reactions are $P_{1}$ and $P_{2}$, respectively. Substrate competitions can be
found in many cellular processes, gene expression networks, and signal
pathway networks \cite{schauble2013effect, pocklington1969competition,
  hargreaves1988neutral, kim2011gene, jores2003essential}.

In this example, we assume substrates, the enzyme, and 
that products exchange with external environment at certain rates.
 More precisely, we consider the following reaction
equations:
\begin{equation}
  \label{enzyme1}
\begin{split}
&\emptyset  \xrightarrow{k_{1}}  S_{1}, \quad\emptyset  \xrightarrow{k_{2}} S_{2}
\\
 &S_{1} + E \xrightleftharpoons[k_3^{-1}]{k_3}   S_{1}E
 \xrightarrow{k_{4}} P_{1} + E \\
 &S_{2} + E \xrightleftharpoons[k^{-1}_5]{k_5}   S_{2}E
 \xrightarrow{k_{6}} P_{2} + E \\
& P_{1} \xrightarrow{k_{7}}  \emptyset, \quad P_{2} \xrightarrow{k_{8}}  \emptyset\\
 &E \xrightleftharpoons[k_{10}]{k_{9}} \emptyset
\end{split}
\end{equation}

Let $x_{1}, \cdots, x_{7}$ be the concentration of $S_{1}$, $S_{2}$,
$E$, $S_{1}E$, $S_{2}E$, $P_{1}$, and $P_{2}$, respectively. The
mass-action equations of this enzyme kinetic model 
read
\begin{equation}
  \label{mass1}
\begin{split}
 x_{1}'& =  k_{1} + k_{3}^{-1}x_{4} - k_{3} x_{1} x_{3} \\
x_{2}' &=  k_{2} + k_{5}^{-1}x_{5} - k_{5} x_{2} x_{3} \\
x_{3}' &=  k_{10} - k_{9}x_{3} - k_{3}x_{1}x_{3} - k_{5}x_{2}x_{3} +
(k_{3}^{-1} + k_{4})x_{4} + (k_{5}^{-1} + k_{6})x_{5}\\
x_{4}' &=  k_{3} x_{1}x_{3} - (k_{4} + k_{3}^{-1}) x_{4}\\
x_{5}' &=  k_{5} x_{2}x_{3} - (k_{6} + k_{5}^{-1}) x_{5}\\
x_{6}' &=  k_{4} x_{4} - k_{7} x_{6}\\
x_{7}'&= k_{6} x_{5} - k_{8} x_{7}
\end{split}
\end{equation}

By the deficiency zero theorem \cite{feinberg1987chemical}, it is
easy to check that  the system \eqref{mass1} admit a unique stable equilibrium $\mathbf{x}_{*}$.
Therefore, one can apply Theorem \ref{degfixpt}  to explicitly
calculate the degeneracy  of system \eqref{mass1}. Let $I_{1} = \{ S_{1} \}$, $I_{2} = \{ S_{2} \}$ be
the input sets and $\mathcal{O} = \{P_{1}, P_{2}\}$ be the output
set. We choose parameters $k_{1} = 5, k_{2} = 10, k_{3} = 20$,
$k_{4} = 5, k_{5} = 10, k_{6} = 10$, $ k_{7} = 1, k_{8} = 1, k_{9} =
2.5$, $k_{10} = 3$, and $k_{3}^{-1} = k_{5}^{-1} = 0.1$. Although
these parameters are artificially chosen, we remark that the
qualitative result in this example holds with other parameters.

With the parameters chosen above, we have
$$
MI_{0} := \lim_{\epsilon\rightarrow 0} MI(I_{1}; I_{2}; \mathcal{O} )
= 0.0646 \,.
$$
This implies a weak but
positive degeneracy $\mathcal{D}( \mathcal{O})$ of this enzyme kinetic
network. Heuristically,
this means different components of the network input, i.e., $S_{1}$ and
$S_{2}$, can perform certain common function at the output set
$\{P_{1}, P_{2} \}$. In addition, by Theorem \ref{deg2com}, this
system has positive complexity. ( With the parameters above, the
mutual information between $I_{1}$ and $I_{2}$ is $0.5338$. )

The degeneracy of this simple enzyme kinetic network can be enhanced
in the following two ways.

\begin{itemize}
  \item Assume products $P_{1}$ and $P_{2}$ are merged into one species
    $P$ and let $\mathcal{O} = \{ P \}$. With the same set of parameters (the rate of $P
    \rightarrow \emptyset$ becomes $k_{7} + k_{8}$), we observe $16.72 \%$
    increase of $MI_{0}$. This result coincides with the conceptual
    interpretation that by merging two product species into one, a
    small interruption on a subset of the network input gives less impact to
    the output (higher degeneracy).
\item Assume substrates $S_{1}$ and $S_{2}$ can be converted into each
  other with rates $k_{a}$ and $k_{b}$
$$
  S_{1} \xrightleftharpoons[k_{b}]{k_{a}} S_{2}
$$
while other reactions and parameters are as in the original setting.
Then the degeneracy increases with suitable $k_{a}$ and $k_{b}$. For
example, an $86.48 \%$ increase of $MI_{0}$ is observed with $k_{a}
= k_{b} = 5$. See Figure \ref{enzymedeg} for values of $MI_{0}$ with
varying $k_{a}$ and $k_{b}$. Conceptually, this means 
that the impact of a small interruption on a subset of the network
input can be reduced (i.e., higher degeneracy) by adding
interactions among the network input. We remark that a similar
numerical observation was made for the IL-4R and EpoR crosstalk
model in \cite{li2012quantification}. We conjecture that under
certain conditions, adding interactions among the input components
of a mass-action network will increase its degeneracy.
\end{itemize}

\begin{figure}[htbp]
\centerline{\includegraphics[height = 9cm]{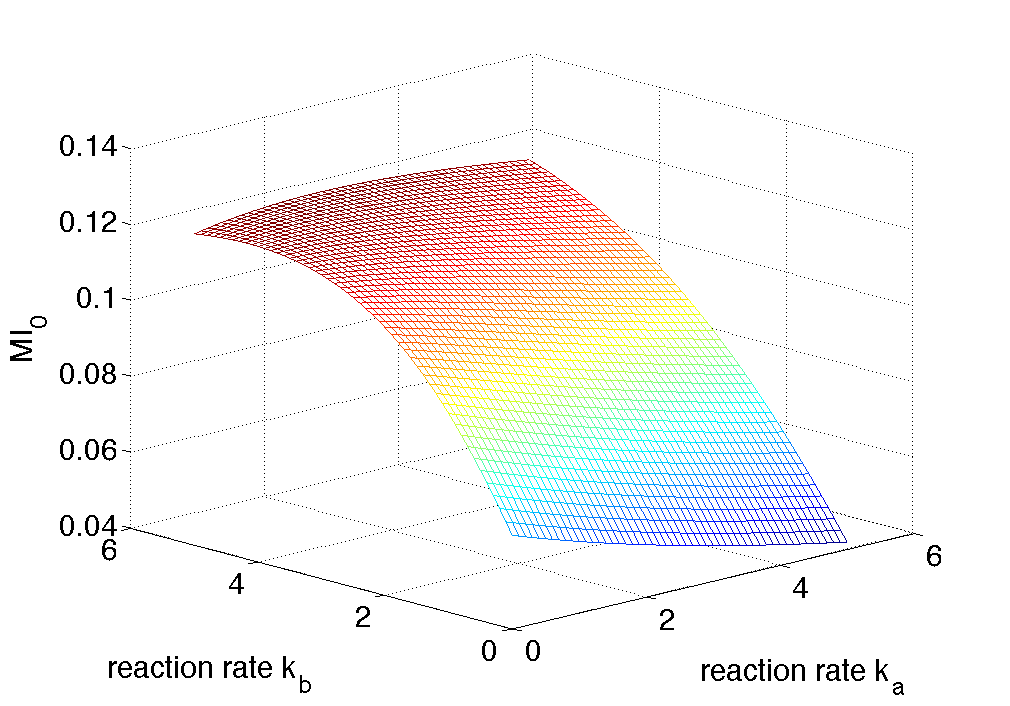}}
\caption{\label{enzymedeg} The change of $MI_{0}$ with varying $k_{a}$
and $k_{b}$. }
\end{figure}

\end{ex}

\bibliography{myref}
\bibliographystyle{plain}

\end{document}